

\documentclass[ECP, preprint]{ejpecp} 




\usepackage{amsthm}
\usepackage{thmtools}
\usepackage{thm-restate}
\usepackage{color}
\declaretheorem[name=Theorem,numberwithin=section]{thm}
\usepackage{bbm}

\DeclarePairedDelimiter{\norm}{\lVert}{\rVert}
\newcommand{\indep}{\perp \!\!\! \perp}
\newcommand{\R}{{\mathbb{R}}}


\SHORTTITLE{The characteristic function of the generalised signature process}

\TITLE{A PDE approach for solving the characteristic function of the generalised signature process}



\AUTHORS{%
  Terry Lyons\footnote{Department of Mathematics, University of Oxford.
    \EMAIL{terry.lyons@maths.ox.ac.uk}}
  \and 
  Hao Ni \footnote{Department of Mathematics,
    University College London. \EMAIL{h.ni@ucl.ac.uk}}
    \and
    Jiajie Tao \footnote{Department of Mathematics,
    University College London. \EMAIL{jiajie.tao.21@ucl.ac.uk}}
    }



\KEYWORDS{Rough Path Theory; Expected signature; L\'evy Area; PDE} 

\AMSSUBJ{60L20; 35R45} 

\SUBMITTED{\today} 
\ACCEPTED{NA} 




\VOLUME{0}
\YEAR{2023}
\PAPERNUM{0}
\DOI{10.1214/YY-TN}


\ABSTRACT{The signature of a path, as a fundamental object in Rough path theory, serves as a generating function for non-non-commutative monomials on path space.  It transforms the path into a grouplike element in the tensor algebra space, summarising the path faithfully up to a generalised form of re-parameterisation (a negligible equivalence class in this context) \cite{Lyons2015}. Our paper concerns stochastic processes and studies the characteristic function of the path signature of the stochastic process. In contrast to the expected signature, it determines the law on the random signatures without any regularity condition. The computation of the characteristic function of the random signature offers potential applications in stochastic analysis and machine learning, where the expected signature plays an important role. In this paper, we focus on a time-homogeneous It\^o diffusion process, and adopt a PDE approach to derive the characteristic function of its signature defined at any fixed time horizon. A key ingredient of our approach is the introduction of the generalised-signature process (Definition \ref{def:sig-aug process}). This lifting enables us to establish the Feynman-Kac-type theorem for the characteristic function of the generalised-signature process by following the martingale approach. Moreover, as an application of our results, we 
present a novel derivation of the joint characteristic function of Brownian motion coupled with the L\'evy area, leveraging the structure theorem of anti-symmetric matrices.}




\begin{document}



\section{Introduction}
Rough path theory sets forth a rigorous framework to make consistent sense of controlled differential equations whether they are driven by smooth paths or driven by highly oscillatory paths. It provides a path-wise definition of the solution to stochastic differential equations in contrast to the almost sure definition from It\^o calculus. Recently, the application of rough path theory to machine learning has emerged as a mathematically principled and efficient feature representation of unparameterized streamed data. At the core of Rough path theory lies the concept of the signature, a group homomorphism from the path space onto the group-like elements in the tensor algebra space. The signature exhibits several desirable properties, including (1) faithfulness \cite{boedihardjo2015signature, Terry2010roughpath}, (2) universality \cite{levin2016learning, Terry2007roughpath}, and (3) time parameterization invariance \cite{boedihardjo2014signature, Terry2010roughpath}, making it an excellent candidate for extracting features from time series data. 

Intuitively, the signature of a path can be regarded as a generating function for the non-commutative monomials making up a path. Consequently, when the underlying path is random, the expected signature plays a role similar to that played by the moment-generating function of a random variable in $\mathbb{R}^d$. A fundamental moment problem on whether the expected signature determines the law of the underlying stochastic process has been studied in the literature. In \cite{Ilya2013char}, authors proved a sufficient condition for the expected signature to determine the law of the underlying stochastic process, i.e. if the expected signature has an infinite radius of convergence. A wide range of stochastic processes satisfy this sufficient condition. For example, the expected signature of fractional Brownian motion up to the fixed time with the Hurst parameter $H > \frac{1}{4}$. It leads to great interest in computing the expected signature. 

For example, the ``Cubature on the Wiener process" methodology \cite{Terry2004cubature} uses the expected signature of Brownian motion to build an efficient numerical solver for parabolic PDEs. The expected signature is used in the generative models for synthetic time series generation. The Sig-WGAN \cite{ni2021sigwasserstein, ni2020conditional} and Sig-MMD \cite{chevyrev2022signature} are two representative examples of GAN and MMD models respectively, which utilize the expected signature to construct a metric on the distributions induced by time series. 

Although the expected signature does characterise many measures and certainly all empirical measures, the theoretical justification of the expected signature-based generative models lies in the characteristic property of the expected signature, which may not hold in general. \cite{chevyrev2022signature} provides a simple counter-example constructed by the lognormal distribution to show that two different distributions of random paths may have the same expected signature. Furthermore, the sufficient condition for the uniqueness of the expected signature may not be satisfied. For instance, in the case of the Brownian motion upon the first exit time of a bounded domain \cite{boedihardjo2021expected, Lyons2015}, it remains unclear whether the expected signature can uniquely determine the law. 

This observation motivates us to consider the characteristic function of the signature process, which can fully characterise the distribution of the random signature without any regularity condition. It has the potential to further improve the above-mentioned methodologies based on expected signature in numerical analysis and machine learning. Moreover, it can be directly applied to derive the characteristic function of any linear function on random signatures and recover the law of the key statistics of the stochastic process, such as the L\'evy area. Similar ideas have been attempted, for instance, \cite{cuchiero2023signature} led a detailed analysis on the Laplace-Fourier transform of the signature process of a wide range of stochastic processes, namely, ``signature SDEs''. The signature SDEs not only include the affine and polynomial processes but also can universally approximate a generic class of stochastic differential equations by linearising the vector fields on the path via the signature. Cuchiero et al. \cite{cuchiero2023signature} reduced this transform to the solution to the Riccati-type ODE system for this general class of stochastic processes from the affine or polynomial perspective.

In contrast to \cite{cuchiero2023signature}, this paper focuses on the time-homogeneous It\^o diffusion process and considers the dependency of the characteristic function of its signature process on time and starting point jointly. The primary contributions of our paper are twofold. Firstly, it is devoted to establishing a Feynman-Kac type theorem to establish the link between a characteristic function of a signature process and a parabolic PDE system. Secondly, we exemplify the application of the PDE theorem using the Brownian motion and Lévy area case, which offers novel proof to recover the classical results on the joint characteristic function of this coupled process.  

\subsection{Characteristic function of the generalized-signature process}
Fix $T>0$, $n\geq 1$ and consider a time-homogeneous and $d$-dimensional It\^o diffusion $X = (X_t)_{t \in [0, T]}$. Let $S^n(X_{[s,t]})$ denote the truncated signature of $X$ defined on the time interval $[s, t]$ up to any fixed degree $n$, where the integral is understood in the Stratonovich sense. We are interested in deriving the characteristic function of the truncated signature $S^n(X_{[0,t]})$ conditional on $X_0 = x$, denoted by $\mathcal{L}_n$, i.e., 
\begin{eqnarray*}
\label{eq:main intro}
    \mathcal{L}_n(t, x ; \cdot) :  T^n(\mathbb{R}^d)\to \mathbb{C},\quad (t, x; \lambda) \mapsto \mathbb{E}[\exp(\mathfrak{i} \langle\lambda, S^n(X_{[0,t]})\rangle) | X_0 = x]. 
\end{eqnarray*}

To tackle this problem, we introduce the generalized-signature process $\mathbb{X}^{n} = (\mathbb{X}_t)_{t \in [0, T]}$ as an intermediate object. It is a generalization of the signature process of $S^{n}(X)$, because it satisfies the same SDE as $S^{n}(X)$. However, the initial condition of $\mathbb{X}^{n}$ at time $0$ can be an arbitrary point in the truncated tensor space, whereas that of $S^{n}(X_{[0, t]})$ is restricted to $\mathbf{1}$. This generalization allows us to write the conditional characteristic function of the generalized-signature process $\mathcal{L}_n$ in terms of $\mathbb{L}_{n}$ explicitly, where $\mathbb{L}_{n}$ is given by
\begin{equation*}
\mathbb{L}_n(t,\mathbbm{x}; \cdot) : T^n(\mathbb{R}^d) \to \mathbb{C},\quad (t,\mathbbm{x}; \lambda) \mapsto \mathbb{E}[\exp(\mathfrak{i} \langle\lambda, \mathbb{X}^n_t - \mathbb{X}^n_0\rangle ) | \mathbb{X}^n_0 = \mathbbm{x}].
\end{equation*}
The relation between $\mathcal{L}_n$ and $\mathbb{L}_n$ is established, and we showed that we can always recover $\mathcal{L}_n$ given $\mathbb{L}_n$, hence we transformed the problem into finding the analytical form of $\mathbb{L}_n$. We established the main PDE Theorem \ref{theorem: general char pde} on $\mathbb{L}_n$. First, we show that under mild regularity condition, $\mathbb{L}_{n}$ satisfies the following PDE system with the initial condition specified in Theorem \ref{theorem: general char pde}. Conversely, the solution to the PDE Equation \eqref{eq:general pde} is proved to be the conditional characteristic function $\mathbb{L}_{n}(\cdot, \cdot; \lambda)$.

\begin{restatable*}{thm}{generalcharpde}
\label{theorem: general char pde}
Fix $n\in \mathbb{N}$. Let $(\mathbb{X}^n_t)_{t\in[0,T]}$ be the generalized-signature process defined in Definition \ref{def:sig-aug process} with drift $\mu_n$ and diffusion $\sigma_n$. Assume that $\mu_n, \sigma_n$ satisfy Condition \ref{conditionVectorField}. Denote by $D_n$ the dimension of $\mathbb{X}^n$. 
\\
Let $\lambda\in T^n(E)$, consider its parametrized linear functional $M_{\lambda}$ and the characteristic function $\mathbb{L}_n(t,\mathbbm{x}; \lambda)$ defined in Definition \ref{def:char generalized-signature process}. Assume that 
$\mathbb{L}_n(t,\mathbbm{x}; \lambda)$ satisfies Condition \ref{condtionESDiffusionFixedTime}. Then, 
\begin{itemize}
    \item $\mathbb{L}_n$ satisfies the following PDE 
\begin{equation}
\label{eq:general pde}
    \begin{split}
        \left(-\frac{\partial }{\partial t}+ A_n\right)\mathbb{L}_n(t,\mathbbm{x}; \lambda) +\mathfrak{i}\left((M_{\lambda}\circ \mu_n)(\mathbbm{x})+\sum_{j=1}^{D_{n-1}}(M_{\lambda}\circ b_n^{(\cdot, j)})(\mathbbm{x})\frac{\partial }{\partial \mathbbm{x}^{(j)}}\right)\mathbb{L}_n(t,\mathbbm{x}; \lambda)
\\
-\frac{1}{2}(M_{\lambda}^{\otimes 2}\circ b_n)(\mathbbm{x})\mathbb{L}_n(t,\mathbbm{x}; \lambda)=0, 
    \end{split}
\end{equation}
with initial condition $\mathbb{L}_n(0, \mathbbm{x}; \lambda) = 1$ for all $(t, \mathbbm{x})\in \R\times T^{n-1}(E)$, $b_n = \sigma_n \sigma_n^T$, $b_n^{(\cdot, j)}$ denotes the $j$-th column of $b_n$ and
\begin{equation*}
     A_nf(\mathbbm{x}_0) = \sum_{i=1}^{D_{n-1}} \mu_n^{(i)}(\mathbbm{x}_0)\frac{\partial f}{\partial \mathbbm{x}^{(i)}}\Bigg \vert_{\substack{\mathbbm{x}=\mathbbm{x}_0}} + \sum_{j_1=1}^{D_{n-1}} \sum_{j_2=1}^{D_{n-1}} \frac{1}{2}b_n^{(j_{1}, j_{2})}(\mathbbm{x}_0) \frac{\partial^2 f}{\partial \mathbbm{x}^{(j_1)}\partial \mathbbm{x}^{(j_2)}}\Bigg \vert_{\substack{\mathbbm{x}=\mathbbm{x}_0}}.
\end{equation*}
\item Let $f_n(t, \mathbbm{x};\lambda)$ be a function that solves the Equation \eqref{eq:general pde} with initial condition $f_n(0, \mathbbm{x};\lambda) = 1$. In addition, assume that first-order derivatives of $f_n$ with respect to $\mathbbm{x}$ satisfy polynomial growth (Definition \ref{definition_linear_growth}). Then $f_n(t, \mathbbm{x};\lambda) = \mathbb{L}_n(t,\mathbbm{x};\lambda)$ for all $(t, \mathbbm{x})\in \mathbb{R}^+ \times T^{n-1}(E)$.
\end{itemize}
\end{restatable*}

Note that $\mathbb{L}_n(\cdot, \cdot; \lambda)$  can be viewed as a family of functions parametized by $\lambda$ mapping from $\mathbb{R}^{+} \times T^n(\mathbb{R}^d)$ to $\mathbb{C}$. In the above theorem, we derive a parabolic PDE to characterize $\mathbb{L}_n(\cdot, \cdot; \lambda)$ for any given $\lambda$.

Our proof follows the martingale approach, sharing the spirit utilized in proving the classical Feynman-Kac theorem and the PDE approach of the expected signature of the diffusion process \cite{ni2012expected_signature}. Our proof is built upon the following two key observations,
\begin{enumerate}
    \item the generalized-signature process is a time homogeneous It\^o diffusion;
    \item the map $F: \mathbb{R}^m \rightarrow  \mathbb{C}; a \mapsto \exp(\mathfrak{i} \langle \lambda, a \rangle)$ is multiplicative. 
\end{enumerate}
The definition of the map $F$ also implies the uniform boundedness of the characteristic function, which allows a weakening of the regularity condition required for the PDE theorem for $\mathbb{L}_n$ and can be applied to any $\lambda$. 

In addition to the martingale approach, we employ a slightly different method by directly applying the PDE theorem of the expected signature in \cite{ni2012expected_signature} (See Appendix \ref{appendix: alternative proof of main theorem}). This approach, however, relies on the essential assumption: the characteristic function $\mathbb{L}_n(t, x; \lambda)$ admits Taylor's power series in terms of $\lambda$ and hence can be written as a linear combination of the expected signature. When the radius of convergence (ROC) of such a power series is finite, we are unable to use this approach to prove the PDEs of the characteristic function for the case $|\lambda|$ exceeds the ROC. Nevertheless, if the ROC condition is met, this Taylor series strategy provides a numerical approximation of $\mathcal{L}_{n}$ via the expected signature, which can be solved recursively.


\subsection{Characteristic function of Brownian motion coupled with the L\'evy area}
The L\'evy area of Brownian motion is a classical mathematical object and plays a crucial role in many domains of stochastic analysis, for example, in numerical approximation for SDEs (\emph{e.g.}, the Milstein method). As demonstrated in works by L\'evy\cite{levy1940area} and Levin and Wildon  \cite{levin2008combinatorial}, two distinct approaches yield the analytical formula of the characteristic function of the Lévy area for the case of 2-dimensional Brownian motion. Cuchiero \cite{LevyAreaAffineSDEs} also regarded the $2$-dimensional case as a special case of the affine process, and hence derived the corresponding Riccati ODE representing the characteristic function. It is more technically involved to derive the analytic formula of the joint process of Brownian motion and the L\'evy area with $d>2$, which is discovered by \cite{helmes1983levy}. The main technique used in \cite{helmes1983levy} is a probabilistic approach based on the Girsanov theorem. 

In this paper, we apply our general PDE theorem to this concrete case. This approach provides completely new proof for the characteristic function of this coupled process and recovers its analytic formula by leveraging the structure theorem of anti-symmetric matrices.   

Concretely, given a $d$-dimensional Brownian motion $W = (W_t^{(1)},\dots ,W_t^{(d)})_{t\in [0, T]}$, consider the process
\begin{equation*}
    L^{(i,j)}_t = \frac{1}{2}\bigg (\int_0^t W_s^{(i)} dW_s^{(j)} - \int_0^t W_s^{(j)} dW_s^{(i)}\bigg ),
\end{equation*} 
for any $1\leq i,j \leq d$. This process depends on the initial point of $W_0$. If $W_0=0$, then $L^{(i,j)}_t$ is the L\'evy area of $i$-th and $j$-th coordinates of a standard Brownian motion from time $0$ to $t$.
In \cite{helmes1983levy}, authors introduced the generalized L\'evy area process associated with the anti-symmetric matrix $\Lambda$ of dimension $d \times d$, denoted by $L^{\Lambda}_t$, i.e.,
\begin{eqnarray*}
L^{\Lambda}_t := \sum_{1\leq j_1< j_2\leq d} \Lambda_{j_1, j_2}L_t^{(j_1, j_2)} = \int_{0}^{t}\langle \Lambda W_s, dW_s \rangle. \label{eq:almost levy area}
\end{eqnarray*} 
Note that $L^{\Lambda}$ can be seen as the linear combination of degree $2$ signature of the Brownian motion. Consequently, we apply the PDE theorem (Theorem \ref{theorem: general char pde}) to this case and solve the conditional characteristic function of the $L^{\Lambda}_t$ explicitly by proposing a suitable Ansatz for the PDE satisfied by the characteristic function. Our proof heavily relies on the structure theorem of the anti-symmetric matrix $\Lambda$, but it does not use any probability theory.

Moreover, by leveraging the translation invariance of Brownian motion, we discover a connection between the conditional characteristic function of $L^{\Lambda}$ and the joint characteristic process. This yields the following theorem, which gives the analytic formula for the joint characteristic function of the coupled process.
\begin{restatable*}{thm}{levychar}
\label{main_theorem}
    Let $W = (W_t^{(1)},\dots ,W_t^{(d)})_{t\in [0, T]}$ be a $d$-dimensional Brownian motion. Let $\mu \in \mathbb{R}^d$ and let $\Lambda$ be a $d$-dimensional anti-symmetric matrix. The joint characteristic function of coupled Brownian motion $W$ and the L\'evy area $L$
    \begin{align*}
        \Psi_{W}(t, \mu, \Lambda) & : \mathbb{R}^+ \times \mathbb{R}^d \times \mathbb{R}^{d\times d} \to \mathbb{C};
        \\
        (t, \mu, \Lambda) & \mapsto \mathbb{E} \bigg[ \exp \left(\mathfrak{i} \sum_{i=1}^d \mu_i W_t^{(i)}+ \mathfrak{i} L_t^{\Lambda} \right)\bigg|W_0 = 0 \bigg],
    \end{align*}
    admits the following formula
    \begin{align*}
        \Psi_W(t,\mu, \Lambda) = & \bigg( \prod_{i=1}^{d_1} \frac{1}{\cosh(\frac{\eta_i}{2}t)} \bigg) 
        \\
        & \exp \bigg( \big[ \sum_{i=1}^{d_1} - \frac{1}{\eta_i} ((O\mu)_{2i-1}^2+(O\mu)_{2i}^2)\tanh(\frac{\eta_i}{2}t) \big] - \frac{1}{2}t \sum_{i=1}^{d_0} (O\mu)^2_{2d_1+i} \bigg),
    \end{align*}
    where $O,\ \eta,\ d_0,\ d_1$ are defined in Lemma \ref{lem_antisym_matrix_decomposition}.
\end{restatable*}

\subsection{The outline of the paper}
In Section 2, we introduce the preliminaries of rough path theory, including the signature and the expected signature of a diffusion process. Besides, we summarize the main results on the PDE of the expected signatures 
 elaborated in \cite{ni2012expected_signature}. In Section 3, we introduce the generalized-signature process and we propose the general methodology, we derive the PDE of the conditional characteristic function $\mathcal{L}_n$ as stated in Theorem \ref{theorem: general char pde}. In Section 4, we focus on solving the PDE associated to the conditional characteristic function of the $L^{\Lambda}$ stated in Equation \eqref{eq:almost levy area}. In Section 5, we first establish the link between $\mathcal{L}_n$ and the characteristic function of the joint process $(W, L^{\Lambda})$. We divide the derivation of the joint characteristic function $\Psi_W(t, \mu, \Lambda)$ into the non-degenerated case and degenerated case and provide complete proof to Theorem \ref{main_theorem}. Section 6 discusses future work to conclude the paper. 
\section{Preliminary}


Throughout this section, we fix $(E, \norm \cdot )$ to be a $d$-dimensional normed space, in which the diffusion process takes values.

\subsection{Time-homogeneous It\^o diffusion}
Consider the probability space $(\Omega, \mathcal{F}, \mathbb{P}_{x})$, let $W = (W_{t})_{t \in [0, + \infty)}$ be a $d'$-dimensional Brownian motion. We assume that the probability space is equipped with the filtration $\{\mathcal{F}_{t}\}_{t \in [0, + \infty]}$ generated by $W$.
We define a time-homogeneous It\^o diffusion process $X$, which is a solution to the following SDE
\begin{eqnarray}\label{eqn_SDE}
dX^{(i)}_{t} = \mu^{(i)}(X_{t})dt + \sum_{j = 1}^{d'} \sigma^{(i)}_j(X_{t}) dW^{(j)}_{t},\quad i = 1,\dots, d,
\end{eqnarray}
where $t \in [0, \infty]$, with vector fields $\mu, \sigma_1, \cdots, \sigma_{d'}$ that satisfy standard regularity conditions (see Definition \ref{definition_linear_growth} and Condition \ref{conditionVectorField}) and the integral is understood in the It\^o sense. Using matrix notation, we write the SDE in equation \eqref{eqn_SDE} as
\begin{equation}
    \label{eqn_SDE_matrix}
    dX_t = \mu(X_t)dt + \sigma(X_t)dW_t,
\end{equation}
with \textit{drift term} $\mu$ and \textit{diffusion term} $\sigma: E \to E\times \mathbb{R}^{d'}$, $\sigma(x) = (\sigma_1(x), \dots, \sigma_{d'}(x))$.
Under the probability measure $\mathbb{P}_{x}$, assume that the process $X$ satisfies the initial condition
\begin{eqnarray*}
\mathbb{P}_{x}(X_{0} = x) = 1.
\end{eqnarray*}
Moreover, it is assumed that the law of the pair $(W, X)$ is uniquely determined. For ease of notation, let $b:\mathbb{R}^d \to \mathbb{R}^d \times \mathbb{R}^d$ denote the $d \times d$ matrix, with elements
\begin{eqnarray*}
b^{(j_{1}, j_{2})}(x) := \sum_{i = 1}^{d'}\sigma^{(j_{1})}_{i}(x)\sigma^{(j_{2})}_{i}(x);\quad  j_{1}, j_{2} \in \{1, 2, \dots, d\}.
\end{eqnarray*}
Using matrix notation, we have $b(x) = \sigma(x)\sigma(x)^T$, we call $b$ the \textit{diffusion matrix}.
\\
\begin{definition}[Infinitesimal generator]
Let $X$ be a time-homogenous It\^o diffusion process. For any function $f \in C^2_{0}(E)$, the infinitesimal generator of $f(X)$ is defined as
\begin{equation*}
    Af(x_0) = \sum_{i=1}^{d} \mu^{(i)}(x_0)\frac{\partial f}{\partial x^{(i)}}\Bigg \vert_{\substack{x=x_0}} + \sum_{j_1=1}^d \sum_{j_2=1}^d \frac{1}{2}b^{(j_{1}, j_{2})}(x_0) \frac{\partial^2 f}{\partial x^{(j_1)}\partial x^{(j_2)}}\Bigg \vert_{\substack{x=x_0}},
\end{equation*}
where $C^2_{0}(E)$ denotes the set of all twice differentiable functions with compact support from $E$ to $\mathbb{R}$.
\end{definition}
In order to discuss the regularity of vector fields, let us introduce the concept of the polynomial growth condition as follows:
\begin{definition}
    \label{definition_linear_growth}
    Let $n, m$ be positive integers, $m\geq 1$. A function $f:\mathbb{R}^+\times E \to \mathbb{R}^n$ is said to satisfy the polynomial growth condition of degree $m$ if and only if there exists $C$ such that
    \begin{equation*}
        \sup_{0\leq t \leq T}|f(t,x)|\leq C(1+|x|^m),
    \end{equation*}
    for any $T>0$.
\end{definition}
In the rest of this section, we assume that the drift term $\mu$ and the diffusion vector fields $\{\sigma_{i}\}_{i = 1}^{d'}$ satisfy the following condition.
\begin{condition}\label{conditionVectorField}
The vector fields $\mu, (\sigma_{j})_{j = 1}^{d'}$ satisfy the globally Lipschitz condition, i.e. there exist a constant $C>0$, such that for every $x, y \in E$,
\begin{eqnarray*}
\vert \mu(x) - \mu(y) \vert + \sum_{i = 1}^{d'}\vert \sigma_{i}(x) - \sigma_{i}(y) \vert \leq C \vert x - y \vert.
\end{eqnarray*}
\end{condition}
\begin{remark}
If the vector fields $\mu, (\sigma_{i})_{i = 1}^{d'}$ satisfy Condition \ref{conditionVectorField}, then they satisfy the linear growth condition as well, i.e. there exists a constant $K$, such that for every $x \in E$,
\begin{eqnarray*}
 \vert\mu(x)\vert + \sum_{i = 1}^{d'}\vert \sigma_{i}(x) \vert \leq K (1+ \vert x \vert).
\end{eqnarray*}
By imposing the linear growth type condition and the Lipschitz type condition, it can be proved that the SDE admits a unique and strong solution, as was first done by K. It\^o(\cite{Ito1946}).
\end{remark}
\subsection{Tensor algebra and signature of a diffusion process}
In this section, we introduce the notion of the signature of a path. We start by defining the space where signatures belong to. The following definition follows from \cite{Terry2007roughpath}.
\begin{definition}[Tensor algebra space]
The space of formal series of tensors of $E$, denoted by $T((E))$, is defined to be the following set of the sequences of tensor powers:
\begin{equation*}
    T((E)) = \{ \mathbf{a} = (a_0,a_1, \cdots) | \forall n \geq 0, a_n \in E^{\otimes n} \}. 
\end{equation*}
It is endowed with two internal operations, an addition and a product, which are defined as follows. Let $\mathbf{a} = (a_0,a_1,\cdots)$ and  $\mathbf{b} = (b_0,b_1,\cdots)$ be two elements of $T((E))$. Then it holds that
\begin{equation*}
    \mathbf{a}+\mathbf{b} = (a_0+b_0,a_1+b_1,\cdots);
\end{equation*}
\begin{equation*}
    \mathbf{a}\otimes \mathbf{b} = (c_0,c_1,\cdots),
\end{equation*}
where for each $n \geq 0$,
\begin{equation*}
    c_n = \sum_{k=0}^n a_k \otimes b_{n-k}.
\end{equation*}
The space $T((E))$ endowed with these two operations and the scalar multiplication $\lambda \mathbf{a} =(\lambda a_0, \lambda a_1, \cdots)$ is a real non-commutative unital algebra, with the unit $\mathbf{1}=(1,0,0,\cdots)$. An element $\mathbf{a} =(a_0,a_1,\cdots)$ of $T((E))$ is invertible if and only if $a_0 \neq 0$. Its inverse is then well-defined and given by the series
\begin{equation*}
    \mathbf{a}^{-1} = \frac{1}{a_0}\sum_{n \geq 0}(\mathbf{1} - \frac{\mathbf{a}}{a_0})^n.
\end{equation*}
\end{definition}
\begin{definition}[Truncated tensor algebra]
    Fix $n \in \mathbb{N}$, the truncated tensor algebra space of degree $n$ is defined as     
    \begin{equation*}
        T^n(E) = \{ \mathbf{a} = (a_0,a_1, \cdots, a_n) | a_i \in E^{\otimes i}, i \in \{1,\dots, n\} \}.
    \end{equation*}
Let 
\begin{equation*}
    D_n \coloneqq \sum_{i=0}^{n} d^i = \frac{d^{n+1}-1}{d-1},
\end{equation*}
    be the dimension of $T^n(E)$ where $d$ is the dimension of $E$. 
\end{definition}
 Suppose $(e_1,\dots, e_d)$ is a basis for $E$, then $E^{\otimes n}$ is spanned by basis vectors $e_I \coloneqq (e_{i_1}\otimes \dots \otimes e_{i_n})$ where $I = (i_1,\dots,i_n) \in \{1,\dots, d\}^n$. Consider the dual space $E^*$ of $E$ with basis $(e^*_1,\dots, e^*_d)$. The basis for $(E^*)^{\otimes n}$ can be easily extended to be the set $e^*_I \coloneqq (e^*_{i_1}\otimes \dots \otimes e^*_{i_n})$ where $I = (i_1,\dots,i_n) \in \{1,\dots, d\}^n$ and we have 
 \begin{equation*}
     \langle e^*_{i_1}\otimes \dots \otimes e^*_{i_n}, e_{j_1}\otimes \dots \otimes e_{j_n}\rangle = \delta_{i_1,j_1},\dots, \delta_{i_n,j_n},
 \end{equation*}
 with $\delta_{i,j} = \mathbbm{1}_{i=j}$ and the inner product is defined in canonical sense. 
\begin{definition}[Projection maps]
     We define three kinds of projection maps. Fix $n\in \mathbb{N}$. We define $\pi^n: T((E)) \to T^n(E)$ to be the canonical projection onto the truncated tensor algebra space, i.e.
\begin{equation*}
    \pi^n(\mathbf{a}) = (a_0,a_1, \dots, a_n),\quad \forall \mathbf{a} = (a_0,a_1, \dots, a_n, \dots)\in T((E)).
\end{equation*}
For computational convenience, we also allow $n$ to be negative, in that case, $\pi^n(\mathbf{a}) = 0$. 
\\
Also, we define the ``degree-wise'' projection $\rho^n: T((E)) \to E^{\otimes n}$ in the way
\begin{equation*}
    \rho^n(\mathbf{a}) = a_n,\quad \forall \mathbf{a}= (a_0,a_1, \dots, a_n, \dots)\in T((E)).
\end{equation*}
Let $I = (i_1,\dots, i_n)\in \{1,\dots, d\}^n$ be a finite length index set of length $n$, we define the coordinate projection map as
 \begin{align*}
     \pi^I & : T((E)) \to \mathbb{R};
     \\
     \mathbf{a} & \mapsto e_I^*(\mathbf{a}).
 \end{align*}
\end{definition}

Now let us introduce the definition of the signature of a time-homogeneous It\^o diffusion process.

\begin{definition}
    Let $X = (X_t)_{t \in [0, T]}$ be a time-homogeneous It\^o diffusion process, which satisfies the SDE in Equation \eqref{eqn_SDE_matrix}. For every $t \in [0, T]$, the signature of $X_{[0, t]}$, denoted by $S(X_{[0,t]})$ is defined to be the solution to the following linear controlled differential equation
    \begin{equation}\label{eq:sde signature}
        dS(X_{[0,t]}) = S(X_{[0,t]}) \otimes (\circ dX_t),\quad S(X_{[0, 0]}) = (1, 0, \dots),
    \end{equation}
    where the integral is understood in the Stratonovich sense. For any $n\in \mathbb{N}$, the truncated signature of $X_{[0,t]}$ of degree $n$ is defined to be $S^n(X_{[0,t]}) \coloneqq \pi^n(S(X_{[0,t]}))$. Throughout the paper, whenever the context is clear, we will omit $X$ and use $S_t,\ S^n_t$ to stand for $S(X_{[0,t]})$ and $S^n(X_{[0,t]})$ respectively.
\end{definition}
    \begin{remark}
        By solving the equation \eqref{eq:sde signature} using Picard iterations \cite{Terry2007roughpath}, we can derive the expression of $S_t$ as the sequence of iterative integrals
        \begin{equation*}
            S_t = (1, X_{[0,t]}^{1}, \dots, X_{[0,t]}^{n} ,\dots) \in T((E)),
        \end{equation*}
        where 
        \begin{equation*}
            X_{[0,t]}^{n} = \underset{0<u_{1} < u_{2}<\dots <u_{n}<t}{\int \dots \int} \circ dX_{t_{1}}\otimes \dots \otimes \circ dX_{t_{n}}.
        \end{equation*}
        provided these iterated integrals are well-defined. In particular, let $I = (i_1,\dots, i_n)\in \{1,\dots, d\}^n$ be a finite length index set of length $n$, we have
\begin{equation*}
\pi^I(S_t) = \underset{0<u_{1} < u_{2}<\dots <u_{n}<t}{\int \dots \int} \circ dX^{(i_1)}_{t_{1}}\otimes \dots \otimes \circ dX^{(i_n)}_{t_{n}}.
\end{equation*}
    \end{remark}

\begin{remark}
    Equivalently, the SDE can be defined in ``degreewise" sense as follows
    \begin{equation*}
    \begin{cases}
    d\rho^n(S_t) = \rho^{n-1}(S_t) \otimes (\circ dX_t),\quad \rho^n(S_t) = 0;
    \\
    S^{0}_t = 1,
    \end{cases}
    \end{equation*}
    for any $n\geq 1$.
\end{remark}

In the following, we summarize the key properties of the signature. For more details, we refer to \cite{Terry2007roughpath}.

\begin{definition}
Let $X:[0,s]\longrightarrow E$ and $Y:[s,t]\longrightarrow E$ be two continuous paths. Their concatenation as the path $X*Y :[0,t] \longrightarrow E$ defined by
\begin{eqnarray*}
(X*Y)_{u} = \begin{cases} X_{u}, &  u \in [0,s], \\ Y_{u}-Y_{s}+X_{s}, & u \in [s, t]. \end{cases}
\end{eqnarray*}
\end{definition}
Denote by $\mathcal{V}(J, E)$ the set of continuous paths from $J$ to $E$ such that the signature transform is well-defined. Also, let $\mathcal{S} \subset T((E))$ be the image of signature transformation applied to $\mathcal{V}(J, E)$. We state here three key properties of signature transformation which reveal the group structure of $\mathcal{S}$.
\begin{theorem}[Invariance under time reparametrization]
Let $X:J\longrightarrow E$ be a continuous path with finite p-variation for some $p<2$. Let $\psi: J\longrightarrow J$ be a reparametrization (surjective, continuous, non-decreasing function). Define $\widetilde{X}: \psi(J)\longrightarrow E$ to be the reparametrized path. Then
\begin{equation*}
    S(X_{J}) = S(\widetilde{X_{J}}).
\end{equation*}
\end{theorem}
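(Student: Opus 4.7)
The plan is to prove the equality level by level: for each $n \ge 0$ I will show that $\rho^n(S(X_J)) = \rho^n(S(\widetilde{X}_J))$, where $\widetilde{X}_t := X_{\psi(t)}$. The case $n = 0$ is immediate ($\mathbf{1} = \mathbf{1}$), and the case $n = 1$ reduces to the fact that the increment $\widetilde{X}_{\sup J} - \widetilde{X}_{\inf J}$ equals $X_{\sup J} - X_{\inf J}$ by surjectivity of $\psi$. So the real content is at levels $n \ge 2$.

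For general $n$, I would write out
\begin{equation*}
\rho^n(S(\widetilde{X}_J)) \;=\; \int \!\cdots\! \int_{v_1 < \cdots < v_n,\, v_i \in J} d\widetilde{X}_{v_1} \otimes \cdots \otimes d\widetilde{X}_{v_n},
\end{equation*}
and apply the change of variables $u_i = \psi(v_i)$ inside each Young integral. Because $X$ has finite $p$-variation with $p<2$, all iterated integrals are well-defined Young integrals; since $p$-variation is invariant under continuous non-decreasing reparametrization, the same is true for $\widetilde{X}$. The key computational identity is that for any continuous $f$, the Young integral satisfies $\int_a^b f(v)\, d\widetilde{X}_v = \int_{\psi(a)}^{\psi(b)} f(\psi^{-1}(u))\, dX_u$ when $\psi$ is strictly increasing, and more generally accommodates flat parts because $\widetilde{X}$ is constant where $\psi$ is constant. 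Pulling back all $n$ variables simultaneously, the simplicial ordering $v_1 < \cdots < v_n$ transforms to $u_1 \le \cdots \le u_n$, which differs from $u_1 < \cdots < u_n$ only on a set that contributes zero to the Young integral (a continuous path assigns no mass to the diagonal hyperplanes). This gives $\rho^n(S(\widetilde{X}_J)) = \rho^n(S(X_{\psi(J)})) = \rho^n(S(X_J))$.

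The main obstacle is making the change-of-variables rigorous when $\psi$ is merely non-decreasing rather than strictly increasing, so that $\psi^{-1}$ is multi-valued on flat pieces. I see two clean ways to handle this. The first is an approximation: regularise $\psi$ by $\psi_\varepsilon(t) := \psi(t) + \varepsilon t$, which is strictly increasing, apply the change-of-variables formula for each $\varepsilon > 0$ where the argument is unambiguous, and then pass to the limit $\varepsilon \downarrow 0$ using continuity of the signature in the $p$-variation topology (together with $\| \widetilde{X}_\varepsilon - \widetilde{X} \|_{p\text{-var}} \to 0$). The second is to use the ODE characterisation: set $Y_t := S(X_{[0, \psi(t)]})$; the Young chain rule gives $dY_t = Y_t \otimes dX_{\psi(t)} = Y_t \otimes d\widetilde{X}_t$, so $Y$ and $S(\widetilde{X})$ solve the same linear CDE with initial condition $\mathbf{1}$, and uniqueness for Young CDEs (valid since $p<2$) forces $S(\widetilde{X}_J) = Y_{\sup J} = S(X_J)$.

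I would present the proof via the ODE route, since it isolates the regularity question into a single invocation of the Young chain rule, and handle the flat-interval subtlety by the $\varepsilon$-regularisation above, noting that on any maximal flat interval of $\psi$ both $Y$ and $S(\widetilde{X})$ remain constant, so no information is lost in the passage to the limit.
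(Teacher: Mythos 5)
The paper gives no proof of this statement: it is listed among the preliminaries on signatures with a pointer to the rough-path literature (\cite{Terry2010roughpath, Terry2007roughpath}), so there is no in-paper argument to compare against. Your proposal is the standard argument and is correct in outline; the ODE route you settle on --- setting $Y_t := S(X_{[0,\psi(t)]})$, checking that $Y$ and $S(\widetilde{X})$ solve the same linear Young CDE driven by $\widetilde{X}$ with initial condition $\mathbf{1}$, and invoking uniqueness for $p<2$ --- is the cleanest version and is essentially how the cited reference handles it. Two technical points are worth tightening. First, your regularisation $\psi_\varepsilon(t) = \psi(t)+\varepsilon t$ does not map $J$ onto $J$; after normalising $J=[0,1]$ use instead $(1-\varepsilon)\psi(t)+\varepsilon t$, which is a strictly increasing surjection of $J$. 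Second, $\|X\circ\psi_\varepsilon - X\circ\psi\|_{p\text{-var}}\to 0$ in the \emph{same} $p$ is not automatic; what comes cheaply is uniform convergence together with a uniform $p$-variation bound (reparametrisation preserves $p$-variation exactly), which gives convergence in $p'$-variation for any $p'\in(p,2)$ --- still sufficient for continuity of the signature, but the statement should be phrased that way. Alternatively you can avoid the regularisation altogether: in the ODE formulation the only substitution needed is $\int_J Y_s\otimes dX_{\psi(s)} = \int_{\psi(J)} Y_{\psi^{-1}(u)}\otimes dX_u$, and the multivaluedness of $\psi^{-1}$ on flat intervals is harmless because $Y$ is constant there (it solves a CDE driven by a path that is constant there), so the Riemann sums defining the two Young integrals agree term by term. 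With either repair the proof is complete.
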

\begin{theorem}[Time reversal]
\label{time-reversal}
For a path $X: [a,b] \longrightarrow E$, consider its time-reversal as $\overleftarrow{X}: [a,b] \longrightarrow E$, ${\overleftarrow{X}_t = X_{a+b-t},\ \forall t\in [a,b]}$, then one has
\begin{equation*}
    S(X_{[a,b]})\otimes S(\overleftarrow{X_{[a,b]}}) = \textbf{1},
\end{equation*}
where $\textbf{1}$ is the identity element in $T((E))$.
\end{theorem}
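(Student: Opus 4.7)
The plan is to reduce the identity to a statement about the signature of the ``retraced'' concatenated path $X * \overleftarrow{X}$ and then to show directly that this concatenated signature equals $\mathbf{1}$. Using Chen's identity (multiplicativity of the signature under concatenation) together with the reparametrization invariance from the preceding theorem, one has
\begin{equation*}
S(X_{[a,b]}) \otimes S(\overleftarrow{X}_{[a,b]}) = S\bigl((X * \overleftarrow{X})_{[a,\, 2b-a]}\bigr),
\end{equation*}
so it suffices to prove that the right-hand side is the identity element of $T((E))$.

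To show this, I would introduce the auxiliary $T((E))$-valued function
\begin{equation*}
H(t) := S(X_{[a,t]}) \otimes S(\overleftarrow{X}_{[a+b-t,\, b]}), \qquad t \in [a,b],
\end{equation*}
which satisfies $H(a) = \mathbf{1} \otimes \mathbf{1} = \mathbf{1}$ and, by Chen's identity evaluated at $t = b$, satisfies $H(b) = S(X_{[a,b]}) \otimes S(\overleftarrow{X}_{[a,b]})$. The goal is therefore to show that $H$ is constant on $[a,b]$. Using the defining differential equation $dS(X_{[a,t]}) = S(X_{[a,t]}) \otimes dX_t$ for the forward factor, and the analogue $\frac{d}{ds} S(\overleftarrow{X}_{[s,b]}) = -\, d\overleftarrow{X}_s \otimes S(\overleftarrow{X}_{[s,b]})$ obtained by applying Chen's identity on the infinitesimal interval $[s, s+ds]$ and inverting, the chain rule expresses $H'(t)$ as a sum of two terms. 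Substituting the elementary identity $d\overleftarrow{X}_s = -\, dX_{a+b-s}$ together with the extra factor $-1$ produced by composing with $s(t) = a+b-t$ yields a pointwise cancellation, so $H'(t) \equiv 0$ and hence $H(b) = H(a) = \mathbf{1}$.

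The main obstacle is the careful bookkeeping of two independent sign flips — one from the definition of time reversal $\overleftarrow{X}_s = X_{a+b-s}$, and one from the fact that differentiating $S(\overleftarrow{X}_{[s,b]})$ in $s$ corresponds to \emph{shrinking} the interval from the left rather than extending it from the right — which must combine to produce exact cancellation. I would first carry out the computation rigorously for smooth paths (or paths of bounded variation), where the derivatives make classical sense, and then transfer to the general case $X \in \mathcal{V}(J, E)$ by density of smooth paths together with the continuity of the signature map in the $p$-variation topology (with $p < 2$ as in the preceding theorem). Since $\overleftarrow{X}$ has the same $p$-variation as $X$ and concatenation is continuous in this topology, the identity $H(b) = \mathbf{1}$ passes to the limit, completing the proof.
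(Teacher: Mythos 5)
The paper does not actually prove this statement: Theorem \ref{time-reversal} is listed among the ``key properties of the signature'' in the preliminaries and is cited from the rough-path literature (\cite{Terry2007roughpath}), so there is no in-paper proof to compare against. Your argument is the standard one and is essentially correct. The core computation is sound: with $H(t) = S(X_{[a,t]})\otimes S(\overleftarrow{X}_{[a+b-t,b]})$ one has $H(a)=\mathbf{1}$, $H(b)$ equal to the desired product, and the three sign flips (the chain-rule factor from $s=a+b-t$, the left-endpoint derivative $\tfrac{d}{ds}S(\overleftarrow{X}_{[s,b]}) = -\dot{\overleftarrow{X}}_s\otimes S(\overleftarrow{X}_{[s,b]})$, and $\dot{\overleftarrow{X}}_s=-\dot X_{a+b-s}$) combine to an overall minus sign, so the two terms of $H'(t)$ cancel by associativity of $\otimes$. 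Note also that the algebraic inverse you use for the infinitesimal factor $S(\overleftarrow{X}_{[s,s+h]})^{-1}$ exists purely because its degree-zero component is $1$, so there is no circularity with the statement being proved. Two small remarks. First, the initial reduction via Chen's identity to $S(X*\overleftarrow{X})$ is redundant: your function $H$ already evaluates at $t=b$ to the product $S(X_{[a,b]})\otimes S(\overleftarrow{X}_{[a,b]})$ directly. Second, the limiting step is slightly glossed over: smooth paths are \emph{not} dense in the $p$-variation topology on the full space of finite $p$-variation paths, so the standard fix is to prove the identity for bounded-variation (or piecewise-linear) paths, where the Stieltjes product rule makes $H'\equiv 0$ rigorous, and then approximate a finite $p$-variation path in $p'$-variation for some $p<p'<2$, where piecewise-linear interpolations do converge and the signature map is continuous. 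With that adjustment the argument is complete for the class $\mathcal{V}(J,E)$ considered in the paper.
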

\begin{theorem}[Chen's identity]\label{Chen} Let  $X: [0, s] \rightarrow E$ and $Y: [s, t] \rightarrow E$ be two continuous paths of bounded $1$-variation. Then
\begin{eqnarray*}
S((X*Y)_{[0,t]}) = S(X_{[0,s]}) \otimes S(Y_{[s,t]}).
\end{eqnarray*}
\end{theorem}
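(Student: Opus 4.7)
The plan is to prove Chen's identity level by level, using the iterated-integral representation of the signature which is valid because $X$ and $Y$ are of bounded $1$-variation. Set $Z := X * Y$ on $[0,t]$. The concatenation satisfies $Z_u = X_u$ for $u \in [0,s]$ and $Z_u = Y_u - Y_s + X_s$ for $u \in [s,t]$, so the Stieltjes increments obey $dZ_u = dX_u$ on $[0,s]$ and $dZ_u = dY_u$ on $[s,t]$; the jump of the ``reference level'' at $u=s$ contributes nothing to the differential. The $n$-th level of $S(Z_{[0,t]})$ is the integral of $dZ_{u_1} \otimes \cdots \otimes dZ_{u_n}$ over the simplex $\Delta^n_{[0,t]} := \{0 < u_1 < \cdots < u_n < t\}$.

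The key combinatorial step is to decompose $\Delta^n_{[0,t]}$, up to a Lebesgue-null boundary, into the disjoint union over $k = 0, 1, \ldots, n$ of the product simplices $\Delta^k_{[0,s]} \times \Delta^{n-k}_{[s,t]}$, indexed by how many of the $u_i$ fall below the splitting time $s$. On the $k$-th piece, Fubini and the factorisation $dZ = dX$ on $[0,s]$, $dZ = dY$ on $[s,t]$ give
\begin{equation*}
\int_{\Delta^k_{[0,s]}} dX_{u_1}\otimes\cdots\otimes dX_{u_k}\ \otimes\ \int_{\Delta^{n-k}_{[s,t]}} dY_{v_1}\otimes\cdots\otimes dY_{v_{n-k}} = X^k_{[0,s]} \otimes Y^{n-k}_{[s,t]}.
\end{equation*}
Summing over $k$ yields $\rho^n(S(Z_{[0,t]})) = \sum_{k=0}^n \rho^k(S(X_{[0,s]})) \otimes \rho^{n-k}(S(Y_{[s,t]}))$, which is exactly the $n$-th level of $S(X_{[0,s]}) \otimes S(Y_{[s,t]})$ by the definition of the tensor product in $T((E))$. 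Since this holds for every $n \geq 0$, the two elements of $T((E))$ coincide.

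A slicker alternative I could write, avoiding the explicit simplex split, is to use the ODE characterisation: $u \mapsto S(Z_{[0,u]})$ is the unique $T((E))$-valued solution to $dF_u = F_u \otimes dZ_u$ with $F_0 = \mathbf{1}$, and the candidate $F_u := S(X_{[0,s]}) \otimes S(Y_{[s,u]})$ (for $u \in [s,t]$) satisfies the same ODE on $[s,t]$, because left tensor multiplication by the fixed element $S(X_{[0,s]})$ commutes with integration against $dZ = dY$, with initial value $F_s = S(X_{[0,s]}) \otimes \mathbf{1} = S(X_{[0,s]})$, which matches $S(Z_{[0,s]}) = S(X_{[0,s]})$; uniqueness of the linear CDE then forces equality on $[s,t]$. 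The main obstacle in either route is purely bookkeeping, namely justifying the Fubini exchange and the null-set treatment of the diagonal $\{u_i = s\}$; both are harmless here because bounded $1$-variation guarantees that all iterated Riemann--Stieltjes integrals exist as absolutely convergent Lebesgue--Stieltjes integrals against the total-variation product measure, so the simplex decomposition is exact up to a measure-zero set and contributes nothing.
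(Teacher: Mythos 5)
Your argument is correct, but note that the paper itself offers no proof of Chen's identity: it is stated as a classical preliminary and deferred to the cited reference \cite{Terry2007roughpath}, so there is nothing in the paper to compare against. Your first route is the standard textbook proof --- decomposing the order simplex $\{0<u_1<\cdots<u_n<t\}$ according to how many variables lie below $s$, using that $dZ=dX$ on $[0,s]$ and $dZ=dY$ on $[s,t]$ (the concatenation is continuous at $s$, so no boundary term arises), and matching the resulting sum $\sum_{k=0}^{n} X^{k}_{[0,s]}\otimes Y^{n-k}_{[s,t]}$ against the paper's definition of $\otimes$ on $T((E))$; the measure-theoretic bookkeeping is handled adequately, since continuity plus bounded variation makes the total-variation measure atomless, so the set $\{u_i=s\}$ is indeed null. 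The alternative via uniqueness of the linear controlled differential equation $dF_u=F_u\otimes dZ_u$ with the candidate $F_u=S(X_{[0,s]})\otimes S(Y_{[s,u]})$ is also valid and is arguably cleaner, as it sidesteps the Fubini argument entirely; either version would serve as a complete proof of the stated theorem.
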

Chen’s relation tells us that the range of the signature map acting on bounded variation paths is a group homomorphism from paths space to $\mathcal{S}$. Closeness under multiplication of such range is ensured by Chen’s relation. Existence of inverses is a consequence of Theorem \ref{time-reversal}. Hence, the $\mathcal{S} = (\mathcal{S}, \otimes)$ is actually a group under multiplicative operator $\otimes$, from now on, unless stated explicitly, we assume this group structure on $\mathcal{S}$. We call $\mathcal{S}$ the space of signatures.
\begin{remark}
By embedding it into the tensor algebra space, we can easily check that all the properties of signatures hold for truncated signatures as well.
\end{remark}

\begin{remark}
Although the diffusion processes are defined via the It\^o integration, rough path theory provides an alternative way to define it in the pathwise sense almost surely. More specifically, let $X:= (X)_{t\in [0, T]}$ denote a continuous semi-martingales with the finite quadratic variation 
\begin{eqnarray*}
    [X]_{s, t} = \lim_{|\mathcal{D}| \rightarrow 0} \sum_{i = 1}^{N}|X_{t_i} - X_{t_{i-1}}|^2,
\end{eqnarray*}
where $\mathcal{D} = (t_i)_{i=1}^{N}$ is a finite partition of $[s, t]$. Then we can lift $X$ to the $2$-rough path via $(s, t) \mapsto \pi^{2}(\exp(Y_{s, t}))$, where $Y_{s, t}:= (X_{t} - X_{s}, X_{t} - X_{s}, [X]_{s,t})$ for any $0 \leq s \leq t\leq T$. Consequently, one can make sense of the It\^o integration in the pathwise sense by using this lifting.
\end{remark}

\subsection{PDE of expected signature of a diffusion process} 
\begin{definition}[Expected signature of diffusion process]\label{DefEsSigDiffusionProcess}
Let $X = (X_t)_{t \in [0, T]}$ be an It\^o diffusion process defined by SDE \eqref{eqn_SDE}. Suppose that the signature of $X_{[0, t]}$ is well-defined. The expected signature of $X_{[0,t]}$ starting at point $x$, denoted by $\Phi(t, x)$ is defined as follows:
\begin{align*}
\Phi : \mathbb{R}^+ \times E &\to T((E));
\\
(t,x) & \mapsto \mathbb{E}^{x}[S(X_{[0,t]})],
\end{align*}
where $t \in \mathbb{R}^{+}$. For every $n \in \mathbb{N}$, the $n^{th}$ term of the expected signature of an It\^o diffusion process $X_{[0,t]}$ starting at $x$ is denoted by $\Phi_{n}(t,x)$. Moreover, $\Phi_{0}(t,x) = 1$.
\end{definition}
\begin{remark}
By the definition of the signature, it is known that the signature of any path at time $0$ should be trivial, and thus its expectation is trivial as well, i.e.
\begin{eqnarray*}
\Phi(0,x) = \mathbf{1}.
\end{eqnarray*}
\end{remark}
Here we state a sufficient condition on $\Phi$ for Theorem \ref{eqn_PDE} to hold.
\begin{condition}\label{condtionESDiffusionFixedTime}
$\Phi$ is finite and has continuous partial derivatives with respect to $t$ and continuous second order partial derivatives with respect to $x$, i.e. for any index $I$, $\pi^{I}(\Phi) \in C^{1,2}(\mathbb{R}^+ \times E)$, where $C^{1,2}(\mathbb{R}^+ \times E)$ is the space of continuous functions mapping from $\mathbb{R}^+ \times E$ to $\mathbb{R}$, with continuous partial derivatives $\big( \frac{\partial \Phi}{\partial t} \big)$, $\big( \frac{\partial \Phi}{\partial x_i} \big)_{i=1}^d$, $\big( \frac{\partial^2 \Phi}{\partial x_i \partial x_j} \big)_{i,j=1}^d$.  
\end{condition}
Now, we cite two results from \cite{ni2012expected_signature}, which establish the link between the expected signature of a time-homogeneous diffusion process and a parabolic PDE.
\begin{theorem}\label{eqn_PDE}
Let $X = (X_t)_{t \in [0, T]}$ be an It\^o diffusion process, which is defined by the SDE \eqref{eqn_SDE_matrix} and satisfies Condition \ref{conditionVectorField}. Let $\Phi(t,x)$ be the expected signature of $X_{[0,t]}$ conditioned on $X_0 = x$ (Definition \ref{DefEsSigDiffusionProcess}) and assume it satisfies Condition \ref{condtionESDiffusionFixedTime}. Then $\Phi$ is a function mapping $\mathbb{R}^{+} \times E$ to $T((E))$, which satisfies the following PDE:
\begin{align}\label{pde_expected_signature}
    \left(-\frac{\partial }{\partial t}+ A\right)\Phi(t,x) +\sum_{j=1}^{d}\left(\sum_{j_{1}=1}^{d}b^{(j_{1}, j)}(x)e_{j_{1}}\right) \otimes \frac{\partial \Phi(t,x)}{\partial x_{j}}+\notag \\
\left(\sum_{j = 1}^{d} \mu^{(j)}(x)e_{j}+\frac{1}{2}\sum_{j_{1}=1}^{d}\sum_{j_{2}=1}^{d}b^{(j_{1}, j_{2})}(x)e_{j_{1}}\otimes e_{j_{2}}\right) \otimes \Phi(t,x)=0,
\end{align}
subject to the initial condition:
\begin{eqnarray*}
\Phi(0, x) = \mathbf{1},
\end{eqnarray*}
and the condition:
\begin{eqnarray*}
\Phi_{0}(t, x) = 1,\text{ } \forall (t,x) \in \mathbb{R}^{+} \times E,
\end{eqnarray*}
where $A$ is the infinitesimal generator of $X$, $\mu$ is the drift term of $X$ and $b$ is the diffusion matrix of $X$.
\end{theorem}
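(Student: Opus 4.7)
The plan is to adapt the classical martingale approach underlying the Feynman--Kac theorem. Fix $T > 0$ and consider the $T((E))$-valued process
$$M_t := S(X_{[0,t]}) \otimes \Phi(T - t, X_t), \qquad t \in [0, T].$$
Chen's identity (Theorem \ref{Chen}) together with the time-homogeneous Markov property of $X$ shows that every coordinate projection $\pi^J(M_t)$ equals $\mathbb{E}^x[\pi^J(S(X_{[0,T]}))\mid \mathcal{F}_t]$, and is therefore an $\mathcal{F}_t$-martingale under $\mathbb{P}^x$. The strategy is then to expand $dM_t$ via It\^o's formula, identify its drift, and enforce that this drift vanishes; evaluation at $t=0$, where $S_0 = \mathbf{1}$, will yield the PDE \eqref{pde_expected_signature}, while the initial condition $\Phi(0,x)=\mathbf{1}$ is tautological from the definition of the signature.

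First I would rewrite the Stratonovich SDE $dS_t = S_t \otimes \circ dX_t$ in It\^o form by computing the cross variation $[S, X]$ coordinatewise, obtaining
$$dS_t = S_t \otimes \mu(X_t)\, dt + S_t \otimes \sigma(X_t)\, dW_t + \tfrac{1}{2} \sum_{j_1, j_2} b^{(j_1, j_2)}(X_t)\, S_t \otimes e_{j_1} \otimes e_{j_2}\, dt.$$
Second, since $\pi^J(\Phi) \in C^{1,2}(\mathbb{R}^{+} \times E)$ by Condition \ref{condtionESDiffusionFixedTime}, the classical It\^o formula applied degreewise to $\pi^J(\Phi(T-t, X_t))$ gives
$$d\Phi(T-t, X_t) = \bigl(-\partial_t + A\bigr)\Phi(T-t, X_t)\, dt + \sum_j \partial_{x_j}\Phi(T-t, X_t)\, \sigma^{(j)}(X_t) \cdot dW_t.$$
The It\^o product rule for $M_t = S_t \otimes \Phi(T-t, X_t)$ then produces three drift contributions: the self-drifts of $S_t$ and $\Phi(T-t,X_t)$, plus a cross variation which the Brownian parts of $dS_t$ and $d\Phi$ combine to give
$$d[S, \Phi]_t = S_t \otimes \sum_{j, j_1} b^{(j_1, j)}(X_t)\, e_{j_1} \otimes \partial_{x_j}\Phi(T-t, X_t)\, dt.$$

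Collecting these three terms, the finite-variation part of $dM_t$ is exactly $S_t$ tensored (on the left) with the left-hand side of \eqref{pde_expected_signature} evaluated at $(T-t, X_t)$. Since each $\pi^J(M_t)$ is a true martingale, which I would justify by combining the polynomial growth of $\Phi$ and its first derivatives implied by Condition \ref{condtionESDiffusionFixedTime} with the standard $L^p$ bounds on SDE solutions under Condition \ref{conditionVectorField}, so that the stochastic integrals arising above are genuine martingales rather than merely local ones, this drift must vanish identically. Taking $t=0$ with $S_0 = \mathbf{1}$ and $X_0 = x$, and letting $T$ and $x$ vary, yields the PDE. I expect the main obstacle to be the tensor-algebra bookkeeping around $d[S,\Phi]_t$: the space $T((E))$ is non-commutative, so one must verify that the cross-variation lands in the middle tensor slot so as to match the term $\sum_j (\sum_{j_1} b^{(j_1,j)} e_{j_1}) \otimes \partial_{x_j}\Phi$ of \eqref{pde_expected_signature}, rather than on the left or the right, and that the other two drift contributions line up correctly with the generator term $A\Phi$ and the right-multiplicative term involving $\mu$ and $b$.
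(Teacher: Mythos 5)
Your proposal is essentially the proof this result admits: the paper itself cites Theorem \ref{eqn_PDE} from \cite{ni2012expected_signature} without reproducing the argument, and the cited proof (like the paper's own proof of Theorem \ref{theorem: general char pde}, which is the direct analogue for the characteristic function) is exactly your martingale construction $M_t = S(X_{[0,t]})\otimes\Phi(T-t,X_t)$ via Chen's identity and the Markov property, followed by the It\^o product rule and vanishing of the drift; your It\^o--Stratonovich correction and the placement of the cross-variation term in the middle tensor slot are both correct. One minor remark: you do not actually need the stochastic integrals to be true martingales --- since $\pi^J(M_t)$ is a true martingale by the tower property and is also a continuous semimartingale, uniqueness of the decomposition into a local martingale plus a finite-variation part already forces the drift to vanish, which is fortunate because Condition \ref{condtionESDiffusionFixedTime} does not by itself give the polynomial growth you invoke.
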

\begin{remark}
    By applying the projection map onto the subspace $E^{\otimes n}$ on both sides of Equation \eqref{pde_expected_signature}. We can obtain the recursive relation between $\Phi_n$, $\Phi_{n-1}$, $\Phi_{n-2}$.
\end{remark}
\begin{corollary}\label{corollary_pde}
    Let $X = (X_t)_{t \in [0, T]}$ be an It\^o diffusion process, which is defined by the SDE \eqref{eqn_SDE_matrix} and satisfies Condition \ref{conditionVectorField}. Let $\Phi(t,x)$ be the expected signature of $X_{[0,t]}$ conditioning on $X_0 = x$ (Definition \ref{DefEsSigDiffusionProcess}) and assume it satisfies Condition \ref{condtionESDiffusionFixedTime}. Then, the following PDE is satisfied for all positive integers $n\geq 2$:
\begin{align*}
\left(-\frac{\partial }{\partial t}+ A\right)\Phi_n(t,x)  = &
- \left(\sum_{j = 1}^{d} \mu^{(j)}(x)e_{j}\right) \otimes \Phi_{n-1}(t,x)
\\
& -\sum_{j=1}^{d}\left(\sum_{j_{1}=1}^{d}b^{(j_{1}, j)}(x)e_{j_{1}}\right) \otimes \frac{\partial \Phi_{n-1}(t,x)}{\partial x_{j}}
\\
& - \left(\frac{1}{2}\sum_{j_{1}=1}^{d}\sum_{j_{2}=1}^{d}b^{(j_{1}, j_{2})}(x)e_{j_{1}}\otimes e_{j_{2}}\right) \otimes \Phi_{n-2}(t,x).
\end{align*}
    Moreover, $\Phi_1(t,x)$ should satisfy:
    \begin{eqnarray*}
        \left(-\frac{\partial }{\partial t}+ A\right)\Phi_1(t,x) = -\sum_{j=1}^d \mu^{(j)}(x)e_j,
    \end{eqnarray*}
    where $A$ is the infinitesimal generator of $X$, $\mu$ is the drift term of $X$ and $b$ is the diffusion matrix of $X$.
    \\
    Furthermore, for every integer $n \geq 1$, $\Phi_n(t, x)$ should satisfy the initial condition:
    \begin{equation*}
        \Phi_n(0,x) = 0,
    \end{equation*}
    and the condition:
    \begin{equation*}
        \Phi_{0}(t, x) = 1,\text{ } \forall (t,x) \in \mathbb{R}^{+} \times E.
    \end{equation*}
\end{corollary}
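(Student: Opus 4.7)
The corollary is essentially the degreewise decomposition of Theorem \ref{eqn_PDE}. My approach is to apply the projection $\rho^n : T((E)) \to E^{\otimes n}$ to both sides of the PDE \eqref{pde_expected_signature} for $\Phi$ and track which degree of $\Phi$ each term contributes to.

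\textbf{Step 1 (commuting $\rho^n$ past the differential operators).} Since $\rho^n$ is linear and commutes with the partial derivatives in $t$ and $x$, we have $\rho^n \circ (-\partial_t + A) = (-\partial_t + A) \circ \rho^n$, so the left-hand side becomes $(-\partial_t + A)\Phi_n(t,x)$, where $\Phi_n = \rho^n(\Phi)$.

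\textbf{Step 2 (degree counting on the right-hand side).} The term $\sum_j \mu^{(j)}(x)\, e_j \otimes \Phi(t,x)$ has its degree-$k$ part equal to $\sum_j \mu^{(j)}(x)\, e_j \otimes \Phi_{k-1}(t,x)$, since $e_j \in E$ has degree one. Similarly, the term $\sum_{j}\bigl(\sum_{j_1} b^{(j_1,j)}(x) e_{j_1}\bigr) \otimes \partial_{x_j}\Phi(t,x)$ is a degree-one tensor contracted against $\partial_{x_j}\Phi$, so its degree-$n$ piece uses $\partial_{x_j}\Phi_{n-1}$. Finally the term $\frac{1}{2}\sum_{j_1,j_2} b^{(j_1,j_2)}(x)\, e_{j_1} \otimes e_{j_2} \otimes \Phi(t,x)$ is a degree-two tensor times $\Phi$, so its degree-$n$ projection involves $\Phi_{n-2}$. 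Assembling these three contributions with the correct signs gives exactly the recursion in the statement for $n \geq 2$.

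\textbf{Step 3 (base cases and initial condition).} For $n = 1$, the $\Phi_{n-2} = \Phi_{-1}$ term is zero by the convention $\pi^{n}(\cdot) = 0$ for negative $n$ (and analogously $\rho^{-1} = 0$), and the derivative term $\partial_{x_j}\Phi_0 = \partial_{x_j}(1) = 0$ vanishes as well, leaving only $(-\partial_t + A)\Phi_1 = -\sum_j \mu^{(j)}(x) e_j$, which matches. The initial condition $\Phi_n(0,x)=0$ for $n \geq 1$ follows from $\Phi(0,x) = \mathbf{1} = (1, 0, 0, \ldots)$ by applying $\rho^n$, and the preservation condition $\Phi_0(t,x) = 1$ is already in the hypotheses of Theorem \ref{eqn_PDE}.

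\textbf{Main difficulty.} There is no real obstacle here; the entire argument is bookkeeping on the tensor-algebra grading. The only thing to be slightly careful about is the treatment of $\partial_{x_j}$ acting on $\Phi$: it preserves the tensor degree because the differentiation is in the state variable $x \in E$, not in the tensor indices, so $\partial_{x_j}\Phi_{n-1} = \rho^{n-1}(\partial_{x_j}\Phi)$. Once this is noted, the three degree-counting rules of Step 2 deliver the result immediately.
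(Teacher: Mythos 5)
Your proposal is correct and follows exactly the route the paper indicates: the remark immediately preceding Corollary \ref{corollary_pde} states that the recursion is obtained by projecting Equation \eqref{pde_expected_signature} onto $E^{\otimes n}$, which is precisely your degree-counting argument, including the careful points that $\rho^n$ commutes with $\partial_t$, $A$ and $\partial_{x_j}$ and that the $\Phi_{-1}$ and $\partial_{x_j}\Phi_0$ terms vanish in the base case. No gaps.
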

The following theorem gives the converse result of Theorem \ref{eqn_PDE}. It shows under certain regularity conditions, the solution to the PDE in Equation \eqref{pde_expected_signature} is indeed the expected signature.
\begin{theorem}\label{uniquenessPDE}
Let $X = (X_t)_{t \in [0, T]}$ be an It\^o diffusion process, which is defined by the SDE \eqref{eqn_SDE} and satisfies Condition \ref{conditionVectorField}. Let $A$ be the infinitesimal generator of $X$. Let $\Phi(t,x)$ be the expected signature of $X_{[0,t]}$ conditioning on $X_0 = x$ (Definition \ref{DefEsSigDiffusionProcess}). Suppose $\xi:\mathbb{R}^+\times E \to T((E))$ is a solution to the PDE:
    \begin{eqnarray*}
&&\left(-\frac{\partial }{\partial t}+ A\right)\xi(t,x) +\sum_{j=1}^{d}\left(\sum_{j_{1}=1}^{d}b^{(j_{1}, j)}(x)e_{j_{1}}\right) \otimes \frac{\partial \xi(t,x)}{\partial x_{j}}+\\
&&\left(\sum_{j = 1}^{d} \mu^{(j)}(x)e_{j}+\frac{1}{2}\sum_{j_{1}=1}^{d}\sum_{j_{2}=1}^{d}b^{(j_{1}, j_{2})}(x)e_{j_{1}}\otimes e_{j_{2}}\right) \otimes \xi(t,x)=0,
\end{eqnarray*}
subject to the initial condition:
\begin{eqnarray*}
\xi(0, x) = \mathbf{1},
\end{eqnarray*}
and the condition:
\begin{eqnarray*}
\xi_{0}(t, x) = 1,\text{ } \forall (t,x) \in \mathbb{R}^{+} \times E.
\end{eqnarray*}
Moreover, suppose $\xi(t,x)$ and the first derivative of $\xi(t,x)$ satisfy the polynomial growth condition. Then $\xi(t,x)$ is the expected signature of $X$ over the time interval $[0,t]$ with the starting point $x$, i.e.
\begin{equation*}
    \xi(t,x) = \Phi(t,x),
\end{equation*}
where $(t,x)\in \mathbb{R}^+\times E$.
\end{theorem}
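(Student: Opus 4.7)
The plan is to reduce the theorem to a degree-by-degree uniqueness statement and then to dispatch each degree by a classical Feynman-Kac martingale argument. First, I would project both sides of the PDE onto each homogeneous component $E^{\otimes n}$. Writing $\xi_n := \rho^n(\xi)$ and $\Phi_n := \rho^n(\Phi)$, Corollary \ref{corollary_pde} shows that $\Phi_n$ satisfies a tensor-valued PDE of the form $\left(-\frac{\partial}{\partial t}+A\right)\Phi_n = -g_n$ whose source $g_n$ is linear in $\Phi_{n-1},\Phi_{n-2}$ and the first spatial derivatives of $\Phi_{n-1}$, with initial condition $\Phi_n(0,x)=0$ for $n\geq 1$ and the normalisation $\Phi_0 \equiv 1$. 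Projecting the PDE that $\xi$ is assumed to solve yields exactly the same equation and the same initial condition for $\xi_n$.

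Next, I would argue by induction on the tensor degree $n$. The base case $n=0$ is immediate from the imposed condition $\xi_0 \equiv 1 \equiv \Phi_0$. For the inductive step, set $\eta_n := \xi_n - \Phi_n$ and assume $\eta_k \equiv 0$ for all $k<n$; then the source term in the PDE for $\eta_n$ vanishes, so $\left(-\frac{\partial}{\partial t}+A\right)\eta_n = 0$ with $\eta_n(0,\cdot)=0$. Fix $T>0$ and $x\in E$, and apply It\^o's formula to the process $M^n_s := \eta_n(T-s, X_s)$ on $s\in[0,T]$. Since $\eta_n \in C^{1,2}$ and its It\^o drift equals $\left(-\frac{\partial}{\partial t}+A\right)\eta_n = 0$, $M^n$ is a continuous local martingale whose stochastic-integral representation is driven by $\nabla_x \eta_n(T-s, X_s)\cdot \sigma(X_s)\, dW_s$.

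The main technical obstacle is promoting $M^n$ from a local martingale to a true martingale, so that optional stopping produces $\eta_n(T,x)=\mathbb{E}^x[\eta_n(0,X_T)]=0$. This is where the polynomial growth hypothesis on $\xi$ and $\nabla_x\xi$ is essential: combined with the analogous polynomial growth of $\Phi_n$ and $\nabla_x\Phi_n$ (which follows directly from the Lipschitz assumption on $\mu$ and $\sigma$ via standard SDE moment estimates for $X$ and its derivative flow, applied to the iterated-integral representation of $S^n$), one obtains polynomial growth for $\eta_n$ and $\nabla_x\eta_n$. Together with the classical bound $\mathbb{E}^x\bigl[\sup_{0\leq s\leq T}|X_s|^m\bigr]<\infty$ for every $m\in\mathbb{N}$ (a consequence of Condition \ref{conditionVectorField}), a standard localisation by $\tau_N := \inf\{s\geq 0 : |X_s|\geq N\}\wedge T$ dominates both $M^n_{\cdot\wedge\tau_N}$ and the expected bracket of its stochastic integral by an integrable envelope, so dominated convergence as $N\to\infty$ yields the true-martingale property of $M^n$ on $[0,T]$. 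The identity $\mathbb{E}^x[M^n_T]=M^n_0$ then gives $\eta_n(T,x)=0$, and since $(T,x)$ was arbitrary, $\xi_n\equiv \Phi_n$. Iterating on $n$ delivers $\xi\equiv\Phi$ on $\mathbb{R}^+\times E$.
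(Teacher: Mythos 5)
Your strategy turns the theorem into a pure uniqueness statement for the PDE: you form $\eta_n=\xi_n-\Phi_n$ and run Feynman--Kac on the homogeneous equation it satisfies. But this presupposes that $\Phi$ itself is a classical solution of the recursive PDE of Corollary \ref{corollary_pde}, i.e.\ that each $\Phi_n$ is $C^{1,2}$ with polynomially growing spatial derivatives. That is precisely Condition \ref{condtionESDiffusionFixedTime}, which the paper treats as a nontrivial \emph{extra} hypothesis needed for the forward direction (Theorem \ref{eqn_PDE}) and which Theorem \ref{uniquenessPDE} deliberately does not assume: all regularity hypotheses in the statement concern $\xi$ alone. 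Your aside that smoothness and polynomial growth of $\nabla_x\Phi_n$ "follows directly from the Lipschitz assumption via the derivative flow" is not justified either --- global Lipschitz coefficients give only Lipschitz dependence on the initial point, not a differentiable flow, so $x\mapsto\Phi_n(t,x)$ need not be $C^2$ (or even $C^1$) under Condition \ref{conditionVectorField}. Without that regularity you cannot apply It\^o's formula to $\eta_n(T-s,X_s)$, and the inductive step collapses. The localisation and dominated-convergence part of your argument is fine; the gap is upstream of it.

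The route that works under the stated hypotheses --- and the one the paper itself uses for the exactly analogous converse in the second bullet of Theorem \ref{theorem: general char pde} --- is a product martingale that never touches $\Phi$: fix $T>0$ and set $Y_s:=S(X_{[0,s]})\otimes\xi(T-s,X_s)$. It\^o's product rule shows that the drift of $Y$ equals $S(X_{[0,s]})$ tensored with the left-hand side of the PDE evaluated at $(T-s,X_s)$, hence vanishes because $\xi$ solves the PDE; the polynomial growth of $\xi$ and $\nabla_x\xi$, together with moment bounds on $X$ and on the iterated integrals of $S(X_{[0,s]})$ (Lemma \ref{lemma:krylov} / Lemma \ref{lemma:regularity conditional expectation}), upgrades $Y$ to a true martingale. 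Then
\begin{equation*}
\xi(T,x)=Y_0=\mathbb{E}^x[Y_T]=\mathbb{E}^x\bigl[S(X_{[0,T]})\otimes\xi(0,X_T)\bigr]=\mathbb{E}^x\bigl[S(X_{[0,T]})\bigr]=\Phi(T,x),
\end{equation*}
which identifies $\xi$ with the expected signature directly, with no a priori regularity of $\Phi$ required. You should either switch to this argument or add (and justify) the missing regularity of $\Phi$ as an explicit step.
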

\subsection{Basics of Partial Differential Equations}
In this section, we present some basic notions and results on parabolic PDEs for a detailed explanation, refer to \cite{Friedman1964pde}. Consider the PDE of the following form
\begin{equation}\label{parabolic_pde}
    -\frac{\partial f}{\partial t} + \sum_{i,j=1}^d a_{ij}(t,x)\frac{\partial^2 f}{\partial x_i \partial x_j} + \sum_{i=1}^d b_i(t,x) \frac{\partial f}{\partial x_i} + c(t,x) f = d(t,x),
\end{equation}
for $(t,x)\in (0,T)\times \Omega$, where $\Omega$ is an open, path-wise connected subset of $E$ and $f\in C^{1,2}([0,T]\times E)$, $f(t,x)\in \mathbb{C}$. Let 
\begin{eqnarray*}
    && D = (0,T] \times \Omega;
    \\
    && \Sigma = \partial \Omega \times [0,T] \cup \Omega \times \{0\},
\end{eqnarray*}
where $ \partial \Omega$ denotes the boundary of $\Omega$. We assume the coefficients functions $a_{ij},\ b_i,\ c$ are all bounded on closure of $D$ and $a_{ij} = a_{ji}$.
\begin{definition}
    The PDE in equation \eqref{parabolic_pde} is uniformly parabolic if there exists $\lambda > 0$ such that
    \begin{equation*}
        \sum_{i,j=1}^d a_{ij}(x,t)u_i u_j \geq \lambda \norm{u}^2,
    \end{equation*}
    for all $(t,x)\in \Bar{D}$ and $u\in E$.
\end{definition}
Let us recall the celebrated Feynman-Kac formula, which establishes a link between parabolic PDEs and stochastic processes. The formula will be used to prove the uniqueness of the solution to the PDE we will derive in the next section.
\begin{theorem}
    \label{feynman-Kac}
    Let $X = (X_t)_{t \in [0, T]}$ be an It\^o diffusion process defined in Equation \eqref{eqn_SDE_matrix} which satisfies condition \ref{conditionVectorField}. Let $A$ be the infinitesimal generator of $X$. Let $T >0$. Suppose that $F \in C^{1,2}([0,T]\times E, \mathbb{R})$ satisfying
    \begin{equation*}
        -\frac{\partial F(t,x)}{\partial t} = AF(t,x) - V(t,x)F(t,x) + f(t,x);
    \end{equation*}
    for all $x\in E$ and $t \in [0,T]$ subject to the terminal condition
    \begin{equation*}
        F(T,x) = \psi(x),\ \forall x \in E,
    \end{equation*}
    where $V, f$ and $\psi$ are real-valued functions that satisfy the polynomial growth condition. Then $F$ admits the solution
    \begin{equation*}
        F(t,x) = \mathbb{E} \Bigg[ \psi(X_T)Z(t, X_t) + \int_t^T f(s,X_s)Z(s, X_s) ds \bigg\vert X_t=x\Bigg],
    \end{equation*}
    where $t \in [0,T]$ and
    \begin{equation*}
        Z(t,x) = \exp\bigg(-\int_t^T V(s, x)ds\bigg).
    \end{equation*}
    Furthermore, the solution $F$ is unique.
\end{theorem}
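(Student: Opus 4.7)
The plan is to prove the representation via the standard martingale argument: construct an auxiliary process built from $F$ that is a (local) martingale by virtue of the PDE, then take expectations. Concretely, fix $(t,x)$ and for $s\in[t,T]$ set
\begin{equation*}
Z_s := \exp\!\left(-\int_t^{s} V(r,X_r)\,dr\right), \qquad M_s := Z_s\,F(s,X_s) + \int_t^{s} Z_r\,f(r,X_r)\,dr,
\end{equation*}
interpreting the factor $Z$ appearing in the statement as this path-functional (which is the only reading that makes the identity consistent with Feynman--Kac). The proof then proceeds in four steps.

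First, I would apply It\^o's formula to the product $Z_s F(s,X_s)$. Since $Z$ is of finite variation with $dZ_s=-V(s,X_s)Z_s\,ds$, and $F\in C^{1,2}$, this gives
\begin{equation*}
d(Z_s F(s,X_s)) = Z_s\Bigl(\tfrac{\partial F}{\partial t}+AF\Bigr)(s,X_s)\,ds \;-\; Z_s V(s,X_s)F(s,X_s)\,ds \;+\; Z_s\,\nabla_x F(s,X_s)^{\top}\sigma(X_s)\,dW_s.
\end{equation*}
Substituting the PDE $\tfrac{\partial F}{\partial t}+AF = VF - f$ cancels the $VF$ terms and leaves
\begin{equation*}
dM_s = Z_s\,\nabla_x F(s,X_s)^{\top}\sigma(X_s)\,dW_s,
\end{equation*}
so $M$ is a continuous local martingale.

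Second, I would upgrade $M$ from local to true martingale on $[t,T]$. Since $V$ has polynomial growth, $Z_s$ is uniformly bounded by $\exp(C\int_t^T(1+|X_r|^m)\,dr)$, which under Condition \ref{conditionVectorField} has moments of all orders thanks to the standard SDE moment estimate $\mathbb{E}\sup_{s\le T}|X_s|^p<\infty$ for every $p$. Combined with polynomial growth of $\sigma$ and the implicit (or explicit, via the hypotheses on $F$'s derivatives that accompany a classical Feynman--Kac statement) polynomial growth on $\nabla_x F$, one obtains $\mathbb{E}\int_t^T |Z_s \nabla_x F(s,X_s)^{\top}\sigma(X_s)|^2\,ds<\infty$, which makes the stochastic integral a genuine martingale. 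Taking conditional expectation given $X_t=x$ and using $Z_t=1$ yields $F(t,x)=M_t=\mathbb{E}[M_T\mid X_t=x]$; plugging in the terminal condition $F(T,X_T)=\psi(X_T)$ gives the claimed formula.

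Third, for uniqueness, suppose $\tilde F$ is another $C^{1,2}$ solution with the same polynomial-growth derivative bounds. The difference $G:=F-\tilde F$ solves the homogeneous PDE $-\partial_t G = AG - VG$ with terminal condition $G(T,\cdot)=0$. Repeating the first two steps with $f\equiv 0$ and $\psi\equiv 0$ shows $G(t,x)=\mathbb{E}[Z_T\cdot 0\mid X_t=x]=0$, giving uniqueness. The main technical obstacle in all of this is the localization/integrability argument in the second step: the statement as given only imposes polynomial growth on $V,f,\psi$, not on $F$ or its gradient, so one must either invoke the convention that such growth is implicit in the hypothesis or justify it a posteriori via parabolic regularity (e.g.\ Friedman \cite{Friedman1964pde}). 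A careful stopping-time argument $\tau_n=\inf\{s\ge t:|X_s|\ge n\}\wedge T$ followed by passage to the limit using dominated convergence, controlled by the moment bounds for $X$ and $Z$, handles this rigorously.
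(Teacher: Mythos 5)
The paper does not actually prove this statement; it defers entirely to Karatzas and Shreve \cite{Karatzas1991stocal}, where the proof is precisely the martingale argument you outline: It\^o's product rule applied to $\exp(-\int_t^s V(r,X_r)\,dr)\,F(s,X_s)$ plus the accumulated source term, cancellation of the drift via the PDE, localization by exit times $\tau_n$, and passage to the limit using growth bounds. Your reading of the discount factor as a path functional (rather than the literal, frozen-argument $Z(s,X_s)$ printed in the statement) is the correct one, your It\^o computation in the first step is right, and your uniqueness argument by linearity is standard. It is also the same technique the paper itself deploys later for its main Theorem \ref{theorem: general char pde}. So in structure your proof coincides with the proof the paper points to.

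One step is false as written, however. In your second step you assert that $Z_s\le\exp\bigl(C\int_t^T(1+|X_r|^m)\,dr\bigr)$ "has moments of all orders thanks to the standard SDE moment estimate $\mathbb{E}\sup_{s\le T}|X_s|^p<\infty$ for every $p$." Finiteness of all polynomial moments of a random variable $Y$ does not imply finiteness of $\mathbb{E}[e^{Y}]$: already for Brownian motion, $\mathbb{E}\bigl[\exp\bigl(c\int_0^T|W_r|^2\,dr\bigr)\bigr]$ is finite only for $cT$ small, and $\mathbb{E}\bigl[\exp\bigl(\int_0^T|W_r|^3\,dr\bigr)\bigr]=+\infty$. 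Hence if $V$ is unbounded below with polynomial growth of degree at least $2$, the discount factor need not be integrable, the local martingale need not be a true martingale, and even the expectation purporting to define $F$ may diverge. This defect is largely inherited from the paper's loose statement: Karatzas and Shreve assume the potential is nonnegative (so that the discount factor is bounded by $1$), and under that hypothesis, or $V$ bounded below, your localization-and-limit argument closes — modulo the polynomial-growth hypothesis on $F$ and $\nabla_x F$, which you correctly flag as missing from the statement and which the cited reference does impose.
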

\begin{proof}
The proof can be found in \cite{Karatzas1991stocal}.
\end{proof}
\section{General methodology}
\label{sec:general methodology}
We propose in this section a general approach to characterize the law of the truncated signature of an It\^o diffusion in a closed time interval. Let $(E, \norm \cdot )$ be a $d$-dimensional Banach space, and let $X_t$ be a $E$-valued time-homogeneous It\^o diffusion. For any $n\in \mathbb{N}$, the conditional characteristic function of $S^n(X_{[0,t]})$ is given by
\begin{align}
    \mathcal{L}_n(t,x; \cdot) : T^n(E) \to \mathbb{C},\quad (t, x; \lambda) \mapsto \mathbb{E}[\exp(\mathfrak{i} \langle\lambda, S^n(X_{[0,t]})\rangle) | X_0 = x]. \label{eq:conditional char enhanced}
\end{align}
We demonstrate in this section that $\mathcal{L}_n$ can be well understood via the parabolic PDE in Equation \eqref{eq:general pde}. We consider an intermediate object, the so-called generalized-signature process.



\begin{definition}[Generalized-signature process]\label{def:sig-aug process}
Let $X = (X_t)_{t \in [0, T]}$ denote an $E$-valued time-homogeneous diffusion process with the drift $\mu$ and the diffusion $\sigma$. The generalized-signature process of $X_{[0,t]}$, denoted by $(\mathbb{X}_t)_{t \in [0, T]}$, is defined as a time-homogeneous diffusion process taking values in $T((E))$, which satisfies the following SDE,
\begin{equation*}
\begin{cases}
d\mathbb{X}_t = \mathbb{X}_t \otimes \big( (\mu(\rho^{1}(\mathbb{X}_t))dt+ \sigma(\rho^{1}(\mathbb{X}_t ))\circ dW_t )\big),\quad \mathbb{X}_0 = \mathbbm{x};
\\
\mathbb{X}^{0}_t = 1,
\end{cases}
\end{equation*}
where $\mathbbm{x} \in T(E)$ and the integral $\circ$ is understood in the Stratonovich's sense. For any $n\geq 1$, we define $\mathbb{X}^n$ to be $\pi^n(\mathbb{X})$, the truncated generalized-signature of degree $n$.
\end{definition}

Equivalently, we can express the dynamics of $\mathbb{X}$ in the It\^o sense, see the lemma below.
\begin{lemma}\label{lemma:enhanced diffusion}
The generalized-signature process $(\mathbb{X}_t)_{t \in [0, T]}$ defined in Definition \ref{def:sig-aug process} admits the following SDE 
\begin{align*}
    d \mathbb{X}_t & = \mu_{\mathbb{X}}(\mathbb{X}_t)dt + \sigma_{\mathbb{X}}(\mathbb{X}_t)dW_t,\quad \mathbb{X}_0 = \mathbbm{x}
\end{align*}
where $\mathbbm{x} \in T((E))$ and the integral is understood in It\^o's sense and
\begin{align*}
    \mu_{\mathbb{X}}: T((E)) \to T((E)),\quad \mu_{\mathbb{X}}(\mathbbm{x}) & =  \left( \mathbbm{x} \otimes \mu(\rho^1(\mathbbm{x})) + \frac{1}{2} \mathbbm{x} \otimes (\sigma \otimes \sigma^T)(\rho^1(\mathbbm{x})) \right);
    \\
    \sigma_{\mathbb{X}}: T((E)) \to  T((E)) \times \mathbb{R}^{d'},\quad \sigma_{\mathbb{X}}(\mathbbm{x})w & = \left(\mathbbm{x} \otimes (\sigma(\rho^1(\mathbbm{x}))w) \right).
\end{align*}
Equivalently speaking, for any $n\geq 1$, the truncated signature-augmented process $(\mathbb{X}^n_t)_{t\in [0, T]}$ satisfies the following SDE
\begin{eqnarray*}
    d \mathbb{X}^n_t & = \mu_n(\mathbb{X}^n_t)dt + \sigma_n(\mathbb{X}^n_t)dW_t,
\end{eqnarray*}
with drift
\begin{align*}
    \mu_{n} & : T^n(E) \to T^n(E),\quad \mu_{n} = (\mu^{(1)} \oplus \dots \oplus \mu^{(n)});
    \\
    \mu^{(i)} & :  T^n(E) \to E^{\otimes i},\quad \mu^{(i)}(\mathbbm{x}) = \left( \rho^{n-1}(\mathbbm{x}) \otimes \mu(\rho^1(\mathbbm{x})) + \frac{1}{2} \rho^{n-2}(\mathbbm{x}) \otimes (\sigma \otimes \sigma^T)(\rho^1(\mathbbm{x})) \right),
\end{align*}
and diffusion
\begin{align*}
    \sigma_{n} & : T^n(E) \to T^n(E),\quad \sigma_{n} = (\sigma^{(1)} \oplus \dots \oplus \sigma^{(n)});
    \\
    \sigma^{(i)} & :  T^n(E) \to E^{\otimes i} \times \mathbb{R}^{d'},\quad \sigma^{(i)}(\mathbbm{x})w = \left( \rho^{n-1}(\mathbbm{x}) \otimes (\sigma(\rho^1(\mathbbm{x}))w) \right).
\end{align*}

\end{lemma}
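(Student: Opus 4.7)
The plan is to derive the It\^o form by applying the Stratonovich-to-It\^o conversion to the defining SDE of $\mathbb{X}_t$ and then projecting to each truncation $T^n(E)$.

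First, I would decompose the defining SDE as
\begin{equation*}
d\mathbb{X}_t = \mathbb{X}_t \otimes \mu(\rho^1(\mathbb{X}_t))\,dt + \sum_{k=1}^{d'} A^k_t \circ dW^{(k)}_t, \qquad A^k_t := \mathbb{X}_t \otimes \sigma_k(\rho^1(\mathbb{X}_t)),
\end{equation*}
and apply the standard identity $\int_0^t A^k_s \circ dW^{(k)}_s = \int_0^t A^k_s \, dW^{(k)}_s + \tfrac{1}{2}[A^k, W^{(k)}]_t$. Since the It\^o and Stratonovich diffusion coefficients coincide, the diffusion term is immediately read off as $\sigma_{\mathbb{X}}(\mathbbm{x})w = \mathbbm{x}\otimes(\sigma(\rho^1(\mathbbm{x}))w)$; the remaining task is to evaluate the drift correction $\tfrac{1}{2}\sum_k d[A^k, W^{(k)}]_t$.

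Next, I would compute this cross-variation by interpreting $d\mathbb{X}_t$ as integration of $\mathbb{X}_t$ against the semimartingale $Y_t := \rho^1(\mathbb{X}_t)$ (so that the defining SDE reads $d\mathbb{X}_t = \mathbb{X}_t\otimes \circ dY_t$ in Chen's-identity form), whose martingale part is $\sigma(Y_t)\,dW_t$. The cross-variation $[\mathbb{X}, Y^{(\alpha)}]$ then picks up only the product of the martingale parts: the $dW^{(k)}$-coefficient of $d\mathbb{X}_t$ equals $\mathbb{X}_t\otimes \sigma_k(\rho^1(\mathbb{X}_t))$ once one uses that $\rho^0(\mathbb{X}_t)\equiv 1$ is preserved by the flow. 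Contracting against the $\sigma_k$-components of $dY^{(\alpha)}$ and summing over $\alpha$ collapses $\sum_k \sigma_k\otimes\sigma_k$ to $b=\sigma\sigma^T$, yielding $\sum_k d[A^k,W^{(k)}]_t = \mathbb{X}_t\otimes b(\rho^1(\mathbb{X}_t))\,dt$ and hence the advertised drift $\mu_{\mathbb{X}}(\mathbbm{x}) = \mathbbm{x}\otimes\mu(\rho^1(\mathbbm{x})) + \tfrac{1}{2}\mathbbm{x}\otimes(\sigma\otimes\sigma^T)(\rho^1(\mathbbm{x}))$.

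Finally, I would pass to the truncated version by applying $\pi^n$ and using that right-tensoring by a homogeneous element of degree $j$ shifts grading by $j$: the degree-$i$ component of $\mathbbm{x}\otimes\mu$ is $\rho^{i-1}(\mathbbm{x})\otimes\mu$, and the degree-$i$ component of $\mathbbm{x}\otimes(\sigma\otimes\sigma^T)$ is $\rho^{i-2}(\mathbbm{x})\otimes(\sigma\otimes\sigma^T)$; the same projection on $\sigma_{\mathbb{X}}$ produces the componentwise formulas for $\mu^{(i)}$ and $\sigma^{(i)}$. The main obstacle is the cross-variation calculation: because $A^k_t$ depends on $\mathbb{X}_t$ both through its explicit leading tensor factor and implicitly through the composition $\sigma_k\circ\rho^1$, one has to argue that under the signature-style interpretation of the Stratonovich integral the implicit dependence does not contribute to the final correction, and this is the step that relies most heavily on the identity $\rho^0(\mathbb{X}_t)\equiv 1$ being preserved along the trajectory.
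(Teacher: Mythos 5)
Your final formulas and the projection step are right, and your second-paragraph computation of the bracket $[\mathbb{X},Y]$ from the product of the martingale parts is essentially the entire content of the paper's proof, which is a one-line assertion of the It\^o--Stratonovich correction followed by the same degree-wise projection. The problem is the bridge between your first and second paragraphs. If you set $A^k_t=\mathbb{X}_t\otimes\sigma_k(\rho^1(\mathbb{X}_t))$ and take the Stratonovich integral of $A^k$ against $W^{(k)}$ literally, the correction $\tfrac12\sum_k d[A^k,W^{(k)}]_t$ must be computed from the full martingale part of $A^k$; since $\rho^0(\mathbb{X}_t)\equiv 1$ forces $\rho^1(\mathbb{X}_t)$ to evolve exactly as the diffusion $X$, the implicit dependence through $\sigma_k\circ\rho^1$ contributes an extra
\begin{equation*}
\frac{1}{2}\,\mathbb{X}_t\otimes\Big(\sum_{k=1}^{d'}(D\sigma_k)\,\sigma_k\Big)(\rho^1(\mathbb{X}_t))\,dt
\end{equation*}
on top of $\tfrac12\,\mathbb{X}_t\otimes(\sigma\otimes\sigma^T)(\rho^1(\mathbb{X}_t))\,dt$. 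So the identity you assert, $\sum_k d[A^k,W^{(k)}]_t=\mathbb{X}_t\otimes b(\rho^1(\mathbb{X}_t))\,dt$, is false for state-dependent $\sigma$, and the fact you invoke to dismiss the implicit dependence ($\rho^0\equiv 1$ being preserved by the flow) works against you rather than for you: it is exactly what makes the $(D\sigma_k)\sigma_k$ term appear.

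The resolution is not a cancellation but a choice of interpretation. Definition \ref{def:sig-aug process} has to be read as $d\mathbb{X}_t=\mathbb{X}_t\otimes(\circ\,dX_t)$, the Stratonovich pairing of the two semimartingales $\mathbb{X}$ and $X=\rho^1(\mathbb{X})$, where $X$ itself is the \emph{It\^o} diffusion of Equation \eqref{eqn_SDE_matrix}; this is the only reading under which $\mathbb{X}$ started at $\mathbf{1}$ reproduces the Stratonovich signature of $X$, as required by Lemma \ref{lemma: Xt equals to St}. Under that reading the correction is $\tfrac12\,d[\mathbb{X},X]_t$, which involves only the explicit leading tensor factor, and your second-paragraph computation (together with the grading argument for $\mu^{(i)}$ and $\sigma^{(i)}$) is a complete and correct proof. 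You should therefore commit to this interpretation from the start and drop the $\int A^k\circ dW^{(k)}$ decomposition altogether, rather than try to reconcile two brackets that genuinely differ.
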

\begin{proof}
The proof follows directly from the It\^o-Stratonovich correction. Indeed, we can rewrite the SDE of $\mathbb{X}$ as follows
\begin{align}
    d\mathbb{X}_t & = \mathbb{X}_t \otimes \big(\mu(\rho_{1}(\mathbb{X}_t))dt+ \sigma(\rho_{1}(\mathbb{X}_t ))\circ dW_t \big)\notag
    \\
    & = \bigg [\mathbb{X}_t \otimes \mu(\rho_{1}(\mathbb{X}_t)) + \frac{1}{2} \mathbb{X}_t \otimes (\sigma \otimes \sigma^T)(\rho_{1}(\mathbb{X}_t)) \bigg ]dt + \mathbb{X}_t \otimes (\sigma(\rho_{1}(\mathbb{X}_t)) dW_{t})\label{eq:detailed sde}
    \\
    & = \mu_{\mathbb{X}}(\mathbb{X}_t) dt + \sigma_{\mathbb{X}}(\mathbb{X}_t) dW_t,\notag
\end{align}
where the integral is now understood in It\^o's sense and
\begin{align*}
    \mu_{\mathbb{X}}(\mathbbm{x}) & \coloneqq \mathbbm{x}\otimes \mu(\mathbbm{x}_1) + \frac{1}{2}\mathbbm{x}\otimes (\sigma \otimes \sigma^T)(\mathbbm{x}_1),\quad \mathbbm{x} = (1, \mathbbm{x}_1, \mathbbm{x}_2, \dots )\in T((E));
    \\
    \sigma_{\mathbb{X}}(\mathbbm{x})w & \coloneqq  \mathbbm{x} \otimes (\sigma(\mathbbm{x}_1)w) ,\quad \mathbbm{x} = (1, \mathbbm{x}_1, \mathbbm{x}_2, \dots )\in T((E)).
\end{align*}
Similarly, for any $n\geq 1$, by projecting both sides of Equation \eqref{eq:detailed sde} onto the $n$-th degree of the tensor algebra space we have
\begin{align*}
    d\rho^n(\mathbb{X}_t) & = \bigg [\rho^{n-1}(\mathbb{X}_t) \otimes \mu(\rho^{1}(\mathbb{X}_t)) + \frac{1}{2} \rho^{n-2}(\mathbb{X}_t) \otimes (\sigma \otimes \sigma^T)(\rho^{1}(\mathbb{X}_t)) \bigg ]dt + \rho^{n-1}(\mathbb{X}_t) \otimes (\sigma(\rho^{1}(\mathbb{X}_t)) dW_{t})
    \\
    & = \mu^{(n)}(\mathbb{X}^n_t) dt + \sigma^{(n)}(\mathbb{X}^n_t) dW_t,
\end{align*}
with 
\begin{align*}
    \mu^{(n)}(\mathbbm{x}) & \coloneqq \mathbbm{x}_{n-1} \otimes \mu(\mathbbm{x}_1) + \frac{1}{2} \mathbbm{x}_{n-2} \otimes (\sigma \otimes \sigma^T)(\mathbbm{x}_1),\quad \mathbbm{x} = (1, \mathbbm{x}_1, \mathbbm{x}_2,\dots, \mathbbm{x}_n)\in T^n(E);
    \\
    \sigma^{(n)}(\mathbbm{x})w & \coloneqq \left(\mathbbm{x}_{n-1} \otimes (\sigma(\mathbbm{x}_1)w) \right),\quad \mathbbm{x} = (1, \mathbbm{x}_1, \mathbbm{x}_2,\dots, \mathbbm{x}_n)\in T^n(E).
\end{align*}
Finally since $\mathbb{X}^n_t = (1, \rho^{(1)}(\mathbb{X}_t), \dots, \rho^{(n)}(\mathbb{X}_t))$, we obtain
\begin{align*}
    \mu_{n} & = (\mu^{(1)} \oplus \dots \oplus \mu^{(n)});
    \\
    \sigma_{n} & = (\sigma^{(1)} \oplus \dots \oplus \sigma^{(n)}).
\end{align*}
\end{proof}

In the following lemma, we show the relationship between $S(X_{[0,t]})$ and $\mathbb{X}_t$.

\begin{lemma}\label{lemma: Xt equals to St}
Let $X = (X_t)_{t \in [0, T]}$ denote a $E$-valued time-homogeneous It\^o diffusion process with drift $\mu$, diffusion $\sigma$ and the starting point $X_0 = x$. If $\mathbb{X}$ is the generalized-signature process of $X_{[0,t]}$ with starting point $\mathbb{X}_0 = (1, x, 0, \dots)$ and $\mu_{\mathbb{X}}$, $\sigma_{\mathbb{X}}$ satisfy Condition \ref{conditionVectorField} then for any $n\geq 1$
\begin{equation}\label{eq:relation Xt and St}
    \pi^{n}(\mathbb{X}_t) = \pi^{n}(S(X_{[0, t]})) + x \otimes \pi^{n-1}(S(X_{[0, t]})).
\end{equation}
\end{lemma}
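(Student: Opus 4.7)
The plan is to reduce the identity to two facts: (i) the first-degree component of $\mathbb{X}_t$ coincides with $X_t$ itself, and (ii) the generalized-signature equation is a linear controlled differential equation whose solution is obtained by left-multiplying the initial condition by the signature.

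First I would verify (i) by projecting the It\^o form of the generalized-signature SDE (Lemma \ref{lemma:enhanced diffusion}) onto degree one. The term $\mathbbm{x} \otimes \mu(\rho^{1}(\mathbbm{x}))$ contributes $\mu(\rho^{1}(\mathbbm{x}))$ at degree one (because the degree-zero component of $\mathbbm{x}$ equals $1$), whereas $\mathbbm{x} \otimes (\sigma \otimes \sigma^{T})(\rho^{1}(\mathbbm{x}))$ has its lowest-degree contribution in $E^{\otimes 2}$ and therefore vanishes at degree one. The same projection applied to the diffusion coefficient gives $\sigma(\rho^{1}(\mathbbm{x}))$. Hence $\rho^{1}(\mathbb{X}_t)$ obeys exactly the It\^o SDE satisfied by $X_t$, with the same initial value $x$. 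The Lipschitz hypothesis (Condition \ref{conditionVectorField}) on $\mu,\sigma$ then yields strong uniqueness, and so $\rho^{1}(\mathbb{X}_t) = X_t$ almost surely on $[0, T]$.

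Substituting $\rho^{1}(\mathbb{X}_t) = X_t$ back into the Stratonovich form reduces the dynamics of $\mathbb{X}$ to the linear equation $d\mathbb{X}_t = \mathbb{X}_t \otimes \circ dX_t$ with $\mathbb{X}_0 = (1, x, 0, \dots)$. I would then check directly that the candidate $Y_t := (1, x, 0, \dots) \otimes S(X_{[0, t]})$ solves this SDE: since the left factor is deterministic, the Stratonovich product rule gives
\begin{equation*}
    dY_t = (1, x, 0, \dots) \otimes dS(X_{[0, t]}) = (1, x, 0, \dots) \otimes S(X_{[0, t]}) \otimes \circ dX_t = Y_t \otimes \circ dX_t,
\end{equation*}
with $Y_0 = (1, x, 0, \dots)$. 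Uniqueness of $\mathbb{X}_t$ as the solution of a linear SDE on the tensor algebra follows after truncating to $T^{n}(E)$ and applying the standard Lipschitz-based uniqueness to the finite-dimensional It\^o system with coefficients $\mu_n,\sigma_n$ from Lemma \ref{lemma:enhanced diffusion} (the degree-$n$ component depends only on components of degree $\leq n-1$, so the truncated system is closed and its coefficients are Lipschitz under the assumed regularity).

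Finally, expanding $(1, x, 0, \dots) \otimes S(X_{[0, t]})$ degree-by-degree via tensor multiplication produces, at each degree $k$, precisely $\rho^{k}(S(X_{[0, t]})) + x \otimes \rho^{k-1}(S(X_{[0, t]}))$, since the only nonzero components of $(1, x, 0, \dots)$ sit in degrees $0$ and $1$. Applying $\pi^{n}$ and collecting terms yields $\pi^{n}(S(X_{[0, t]})) + x \otimes \pi^{n-1}(S(X_{[0, t]}))$, which is the desired identity \eqref{eq:relation Xt and St}. The main obstacle is Step (i): carefully extracting the degree-one projection from the two Stratonovich-corrected terms in $\mu_{\mathbb{X}}$ and verifying that the corrected drift collapses back to $\mu$, which is essential for identifying $\rho^{1}(\mathbb{X}_t)$ with $X_t$; once that identification is in place, the remaining steps are algebraic.
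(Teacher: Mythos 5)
Your proof is correct, and it organises the argument differently from the paper. The paper proceeds by induction on the degree $n$: it first identifies $\pi^1(\mathbb{X}_t)=(1,X_t)$, then at each step compares the Stratonovich SDEs for $\pi^{n+1}(S_t)$ and $\pi^{n+1}(\mathbb{X}_t)-x\otimes\pi^n(S_t)$ and integrates, invoking Condition \ref{conditionVectorField} for uniqueness at each stage. You instead isolate the two structural facts --- that $\rho^1(\mathbb{X}_t)=X_t$ by strong uniqueness of the degree-one SDE, and that the remaining dynamics is the left-invariant linear equation $d\mathbb{X}_t=\mathbb{X}_t\otimes\circ dX_t$ --- and then exhibit the closed-form solution $\mathbb{X}_t=(1,x,0,\dots)\otimes S(X_{[0,t]})$, from which \eqref{eq:relation Xt and St} falls out by expanding the tensor product degree by degree. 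The two arguments rest on the same inputs (the degree-one identification plus pathwise uniqueness for the triangular truncated system), but yours replaces the induction with a single group-translation identity, which is arguably more conceptual and makes the role of Chen-type left multiplication explicit; it is also slightly more careful than the paper at the base step, where the paper writes $d\pi^1(\mathbb{X}_t)=1\otimes dX_t$ without spelling out that this already requires identifying $\rho^1(\mathbb{X}_t)$ with $X_t$ via uniqueness, exactly the point you address in your step (i). The only place to be mildly cautious is your appeal to Lipschitz uniqueness for the truncated system, since $\mu_n,\sigma_n$ are polynomial rather than globally Lipschitz in general; but this is covered by the lemma's standing hypothesis on $\mu_{\mathbb{X}},\sigma_{\mathbb{X}}$, and in any case the triangular structure (degree $n$ depends only on degrees $\le n-1$) makes the higher-degree components plain integrals of already-determined processes, so uniqueness is automatic once degree one is fixed.
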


\begin{proof}
    For ease of the notation, we use $S_t$ to stand for $S(X_{[0, t]})$. We prove the statement by induction. For $n=1$, since $\mathbb{X}_0 = (1, x, 0, \dots)$ by definition we have $d\pi^1(\mathbb{X}_t) = 1 \otimes dX_t$, therefore $\pi^1(\mathbb{X}_t) = (1, X_t)$. On the other hand, $d\pi^1(S_t) = dX_t$ implies that $\rho^1(S_t) = (1, X_t - x)$, hence Equation \eqref{eq:relation Xt and St} holds.
    \\
    Now assume that Equation \eqref{eq:relation Xt and St} holds for $n$. Then
    \begin{equation*}
        d\pi^{n+1}(S_t) = \pi^{n}(S_t)\otimes (\circ dX_t) = \pi^{n}(\mathbb{X}_t)\otimes (\circ dX_t) - x\otimes \pi^{n-1}(S_t)\otimes (\circ dX_t). 
    \end{equation*}
    Taking the integration, we have
    \begin{equation*}
        \pi^{n+1}(S_t) = \pi^{n+1}(\mathbb{X}_t) -  x\otimes \pi^{n}(S_t),
    \end{equation*}
    where we used Condition \ref{conditionVectorField} to ensure the uniqueness of the solution, and we completed the proof. 
\end{proof}

We introduce here the characteristic function of the generalized-signature process. For notation convenience, in the rest of the paper, we denote by $M_{\lambda}: T^n(E)\to \mathbb{R}, x \mapsto \langle \lambda, x \rangle$ the linear functional parametrized by $\lambda$ for every $\lambda \in  T^n(E)$.
\begin{definition}(Characteristic function of generalized-signature process)
    \label{def:char generalized-signature process}
    Let $n \geq 1$, given any $(t, \mathbbm{x}) \in \mathbb{R}^+
    \times T^n(E)$ and generalized-signature process $\mathbb{X}^n$, we define
    \begin{equation*}
    \mathbb{L}_n(t, \mathbbm{x}, \cdot) : T^n(\mathbb{E}) \to \mathbb{C},\quad (t,\mathbbm{x}; \lambda) \mapsto \mathbb{E}[\exp(\mathfrak{i} M_{\lambda}(\mathbb{X}^n_t - \mathbb{X}^n_0)) | \mathbb{X}^n_0 = \mathbbm{x}],
    \end{equation*}
    to be the characteristic function of $\mathbb{X}$.
\end{definition}
Note that $\mathbb{L}_n(\cdot, \cdot; \lambda)$ can be viewed as a family of functions parametized by $\lambda$ mapping from $\mathbb{R}^{+} \times T^n(E)$ to $\mathbb{C}$. We state here the condition on $\mathbb{L}_n$ for \ref{theorem: general char pde} to hold.
\begin{condition}\label{condtionCharDiffusionFixedTime}
For every $\lambda \in T^n(E)$, $\mathbb{L}_n(t, \mathbbm{x}; \lambda)$ has continuous partial derivatives with respect to $t$ and continuous second order partial derivatives with respect to $\mathbbm{x}$.
\end{condition}
The next lemma explores the dependency of $\mathbb{L}_n$ on the starting point of $\mathbb{X}^n$.
\begin{lemma}\label{lemma: independence char}
    Let $n\geq 1$, for any $(t,\mathbbm{x}) \in \mathbb{R}^+ \times T^n(E)$, we have $\mathbb{L}_n(t, \mathbbm{x}, \cdot) = F_n(t, \pi^{n-1}(\mathbbm{x}), \cdot)$ where
    \begin{equation*}
        F_n(t, \pi^{n-1}(\mathbbm{x}), \cdot) : T^{n-1}(\mathbb{E}) \to \mathbb{C},\quad (t,\mathbbm{x}; \lambda) \mapsto \mathbb{E}[\exp(\mathfrak{i} M_{\lambda}(\mathbb{X}^n_t - \mathbb{X}^n_0)) | \pi^{n-1}(\mathbb{X}^n_0) = \pi^{n-1}(\mathbbm{x})],
    \end{equation*}
    i.e. $\mathbb{L}_n$ is independent of $\rho^n(\mathbbm{x})$.
\end{lemma}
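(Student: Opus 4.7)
The plan is to exploit the ``triangular'' structure of the generalized-signature SDE: the dynamics of $\mathbb{X}^n$ depend on the current state only through its lower-degree projection, and the initial value at level $n$ enters only as an additive constant that cancels in the increment $\mathbb{X}^n_t - \mathbb{X}^n_0$. First I would revisit the formulas for $\mu_n = \mu^{(1)}\oplus\dots\oplus\mu^{(n)}$ and $\sigma_n = \sigma^{(1)}\oplus\dots\oplus\sigma^{(n)}$ provided by Lemma \ref{lemma:enhanced diffusion}. Inspecting the component formulas
\begin{equation*}
\mu^{(i)}(\mathbbm{x}) = \mathbbm{x}_{i-1}\otimes\mu(\mathbbm{x}_1) + \tfrac{1}{2}\mathbbm{x}_{i-2}\otimes(\sigma\otimes\sigma^{T})(\mathbbm{x}_1),\qquad \sigma^{(i)}(\mathbbm{x})w = \mathbbm{x}_{i-1}\otimes\bigl(\sigma(\mathbbm{x}_1)w\bigr),
\end{equation*}
for $1\leq i\leq n$, one sees that only the components $\mathbbm{x}_1, \mathbbm{x}_{i-2}, \mathbbm{x}_{i-1}$ appear, all of which belong to $\pi^{n-1}(\mathbbm{x})$. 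Hence both $\mu_n$ and $\sigma_n$ factor through $\pi^{n-1}$: there exist $\widetilde{\mu}_n$ and $\widetilde{\sigma}_n$ defined on $T^{n-1}(E)$ with $\mu_n = \widetilde{\mu}_n\circ\pi^{n-1}$ and $\sigma_n = \widetilde{\sigma}_n\circ\pi^{n-1}$.

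The second step is to apply $\pi^{n-1}$ to the SDE of $\mathbb{X}^n$. By the same triangularity, the process $(\pi^{n-1}(\mathbb{X}^n_t))_{t\in[0,T]}$ satisfies the autonomous SDE
\begin{equation*}
d\pi^{n-1}(\mathbb{X}^n_t) = \mu_{n-1}\bigl(\pi^{n-1}(\mathbb{X}^n_t)\bigr)dt + \sigma_{n-1}\bigl(\pi^{n-1}(\mathbb{X}^n_t)\bigr)dW_t,\qquad \pi^{n-1}(\mathbb{X}^n_0) = \pi^{n-1}(\mathbbm{x}).
\end{equation*}
Since $\mu_{n-1}, \sigma_{n-1}$ inherit the Lipschitz property from Condition \ref{conditionVectorField}, pathwise uniqueness implies that the joint law of $\bigl((\pi^{n-1}(\mathbb{X}^n_s))_{s\in[0,t]},W\bigr)$ depends on $\mathbbm{x}$ only through $\pi^{n-1}(\mathbbm{x})$.

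Finally I would write the increment as
\begin{equation*}
\mathbb{X}^n_t - \mathbb{X}^n_0 = \int_0^t \widetilde{\mu}_n\bigl(\pi^{n-1}(\mathbb{X}^n_s)\bigr)ds + \int_0^t \widetilde{\sigma}_n\bigl(\pi^{n-1}(\mathbb{X}^n_s)\bigr)dW_s,
\end{equation*}
which is a measurable functional of $(\pi^{n-1}(\mathbb{X}^n_s))_{s\in[0,t]}$ and $W$ that nowhere involves $\rho^n(\mathbbm{x})$. Consequently $M_\lambda(\mathbb{X}^n_t-\mathbb{X}^n_0)$ and its complex exponential are the same functional, and taking expectations yields
\begin{equation*}
\mathbb{L}_n(t,\mathbbm{x};\lambda) = \mathbb{E}\bigl[\exp(iM_\lambda(\mathbb{X}^n_t-\mathbb{X}^n_0))\,\bigm|\,\pi^{n-1}(\mathbb{X}^n_0) = \pi^{n-1}(\mathbbm{x})\bigr] = F_n(t,\pi^{n-1}(\mathbbm{x});\lambda).
\end{equation*}

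There is no real analytic obstacle; the argument is purely structural. The only point to be careful about is the bookkeeping in step one, namely verifying that the factorization $\mu_n = \widetilde{\mu}_n\circ\pi^{n-1}$ holds for every degree $i\leq n$, including the top degree $i=n$ where the highest required component $\mathbbm{x}_{n-1}$ sits exactly at the top of $\pi^{n-1}(\mathbbm{x})$. Once this is in place, the closedness of the SDE on $T^{n-1}(E)$ and uniqueness do the rest.
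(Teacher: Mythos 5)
Your proposal is correct and follows essentially the same route as the paper's proof: both arguments rest on the triangular structure of the generalized-signature SDE, under which $\rho^n(\mathbbm{x})$ enters $\mathbb{X}^n_t$ only as an additive constant that cancels in the increment $\mathbb{X}^n_t-\mathbb{X}^n_0$, while the coefficients factor through $\pi^{n-1}$. Your write-up via the explicit integral representation of the increment is in fact somewhat more detailed than the paper's one-line decomposition $\mathbb{X}^n_t = f(\pi^{n-1}(\mathbbm{x}),\cdot)+\rho^n(\mathbbm{x})$, but the underlying idea is identical.
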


\begin{proof}
    Let $\mathbbm{x}$ be the starting point of $\mathbb{X}^n$. From the definition of $\mathbb{X}^n$ we know that $\mathbb{X}^n_t$ depends linearly on $\rho^n(\mathbbm{x})$ furthermore, the dependency can be written as $\mathbb{X}^n_t = f(\pi^{n-1}(\mathbbm{x}), \rho^1(\mathbb{X}^n_{[0,t]})) + \rho^n(\mathbbm{x})$ for some function $f$. hence $\mathbb{X}^n_t - \mathbb{X}^n_0$ only depends on $\pi^{n-1}(\mathbbm{x})$ as starting point. Since $\mathbb{L}_n(t, \mathbbm{x}, \lambda)$ only depends on the increment $\mathbb{X}^n_t - \mathbb{X}^n_0$, the result follows inmediately.
\end{proof}

\begin{remark}
From now on, for any $ (t, \mathbbm{x}) \in \mathbb{R}^+ \times T^{n-1}(E)$, we consider
\begin{equation}
    \label{eq:char generalized-signature process}
    \mathbb{L}_n(t, \mathbbm{x}; \cdot) : T^{n}(E) \to \mathbb{C},\quad (t,\mathbbm{x}; \lambda) \mapsto \mathbb{E}[\exp(\mathfrak{i} M_{\lambda}(\mathbb{X}^n_t - \mathbb{X}^n_0)) | \pi^{n-1}(\mathbb{X}^n_0) = \mathbbm{x}],
    \end{equation}
to emphazise the dependency on the first $n-1$ degrees of $\mathbb{X}^n$.
    
\end{remark}

The relation between $\mathbb{L}_n$ and $\mathcal{L}_n$ is shown in Lemma \ref{Corollary_L_n} using the relation between $\mathbb{X}^n$ and $S^n$ stated in Lemma \ref{lemma: Xt equals to St}. As a consequence, the objective of deriving $\mathcal{L}_n$ can be reformulated as the problem of solving $\mathbb{L}_n$ and identifying the corresponding PDE, which is satisfied by $\mathbb{L}_n$. 
\begin{lemma}[Link between $\mathcal{L}_n$  and $\mathbb{L}_n$]\label{Corollary_L_n}
Fix $n\geq 1$, let $\mathcal{L}_n$ and $\mathbb{L}_n$ be defined in Equation \eqref{eq:conditional char enhanced} and Definition \ref{def:char generalized-signature process}. For every $(t, x)\in \mathbb{R}^{+} \times E$ and $\lambda \in T^n(E)$, there exists $\Tilde{\lambda} \in T^n(E)$ which depends on $x$, such that
\begin{equation*}
    \mathcal{L}_n(t, x; \lambda) = \mathbb{L}_n(t, (1,x,0,\dots,0);\Tilde{\lambda})\exp{(iM_{\Tilde{\lambda}}(1,x,0,\dots,0))}.
\end{equation*}
Furthermore, if we let $\lambda = \sum_{j = 0}^n \sum_{\vert I \vert = j} \lambda_I e_I$, then $\Tilde{\lambda}$ is given in a recursive form:
\begin{align}
         \Tilde{\lambda}_I & = \lambda_I,\quad \text{if }\vert I\vert = n; \label{eq:recursive_1}
        \\
        \Tilde{\lambda}_I & = \lambda_I - \sum_{j=1}^d x_{j} \Tilde{\lambda}_{\{j\}\cup I},\quad \text{if }1 \leq \vert I\vert < n; \label{eq:recursive_2}
        \\
        \Tilde{\lambda}_{\emptyset} & = \lambda_{\emptyset}, \label{eq:recursive_3}
    \end{align}
     where $\{j\}\cup I$ is defined as the concatenation of indices, i.e. for any $I = (i_1,\dots i_n)$, $\{j\}\cup I = (j, i_1,\dots i_n)$.
\end{lemma}
\begin{proof}
    Fix $\lambda \in T^n(E)$, we want to find $\Tilde{\lambda}$ such that $M_{\lambda}(S^n_t) = M_{\Tilde{\lambda}}(\mathbb{X}^n_t)$. By Lemma \ref{lemma: Xt equals to St}, we have that 
    \begin{equation*}
        \mathbb{X}^n_t = S^n_t + (0, x, x \otimes \rho^1(S_t),\dots, x\otimes \rho^{n-1}(S_t)),
    \end{equation*}
    We are going to construct $\Tilde{\lambda}$ based on $\lambda$. Note first that, $\lambda = \sum_{j = 0}^n \sum_{\vert I \vert = j} \lambda_I e_I$, therefore, we have
    \begin{align*}
        M_{\lambda}(S^n_t) & = \lambda_{\emptyset} + \sum_{j = 1}^n \sum_{\vert I \vert = j} \lambda_I \pi^I (S^n_t);
        \\
        M_{\Tilde{\lambda}}(\mathbb{X}^n_t) & = \Tilde{\lambda}_{\emptyset} + \sum_{j = 1}^n \sum_{\vert I \vert = j} \Tilde{\lambda}_I (\pi^I (S^n_t) + x_{I_1} \pi^{I'}(S^{n-1}_t)),
    \end{align*}
    where $I_1$ denotes the first coordinate of $I$ and $I' = I \setminus I_1$. Hence, by comparing the coefficients of $\pi^I(S^n_t)$ for all indices $I$ with $|I|\leq n$, we get that $\Tilde{\lambda}$ must satisfy the system of equations \eqref{eq:recursive_1}, \eqref{eq:recursive_2} and \eqref{eq:recursive_3}.
    \\
    Observing the system of equations we notice that $\{\Tilde{\lambda}_I\}$ depends only on $\{\Tilde{\lambda}_J\}$ and $x$ where $\vert I \vert = \vert J \vert - 1$, allowing us to solve it recursively, starting with $\vert I \vert = n$. Therefore, we are able to construct a specific parameter tensor $\Tilde{\lambda}$. Finally,
    \begin{align*}
        \mathcal{L}_n(t, x; \lambda) & = \mathbb{E}[\exp(\mathfrak{i} M_{\Tilde{\lambda}}(\mathbb{X}^n_t)\vert \mathbb{X}^n_0 = (1, x, 0, \dots, 0)]
        \\
        & = \mathbb{E}[\exp(\mathfrak{i} M_{\Tilde{\lambda}}(\mathbb{X}^n_t - \mathbb{X}^n_0)\exp(\mathfrak{i}M_{\Tilde{\lambda}}(\mathbb{X}^n_0))\vert \mathbb{X}^n_0 = (1, x, 0, \dots, 0)]
        \\
        & = \mathbb{L}_n(t,(1, x, 0, \dots, 0; \Tilde{\lambda}) \exp(\mathfrak{i}M_{\Tilde{\lambda}}((1, x, 0, \dots, 0))).
    \end{align*}
\end{proof}

\subsection{PDE representation of the characteristic function}
In this subsection, we provide in Theorem \ref{theorem: general char pde} the PDE representation of $\mathbb{L}_n$, the conditional characteristic function of the generalized-signature process, and hence the representation of $\mathcal{L}_n$ stated in Equation \eqref{eq:conditional char enhanced}. We adopt the same ideology as Feynman-Kac (Theorem \ref{feynman-Kac}) by constructing a martingale in the complex plane, taking advantage of the uniform boundedness of the characteristic function. 

\generalcharpde

\begin{proof}
Fix $\lambda\in T^n(E)$, we consider $\mathbb{L}_n(t,\mathbbm{x}; \lambda)$ which satisfies the initial condition 
\\${\mathbb{L}_n(0, \mathbbm{x}; \lambda) = 1}$. 
Fix $t > 0$, consider now the process
    \begin{equation*}
        Y_s = \mathbb{E} [\exp(\mathfrak{i} \langle \lambda, \mathbb{X}^n_t \rangle) \vert \mathcal{F}_s].
    \end{equation*}
    We prove that $Y$ is a martingale, then by computing the drift process of $Y$ we obtain the PDE in the statement. 
    \\
    Since $\exp(\mathfrak{i} \langle \lambda, \mathbb{X}^n_t \rangle)$ is uniformly bounded for any $\lambda$, thus $Y$ is well-defined. Also, we have
    \begin{align*}
        \mathbb{E}[|Y_s|] & = \mathbb{E}[|\mathbb{E} [\exp(\mathfrak{i} \langle \lambda, \mathbb{X}^n_t \rangle) \vert \mathcal{F}_s]|]
        \\
        & \leq \mathbb{E}[\mathbb{E} [|\exp(\mathfrak{i} \langle \lambda, \mathbb{X}^n_t \rangle)| \vert \mathcal{F}_s]|] < \infty,
    \end{align*}
    hence we conclude that $Y$ is a martingale. Next, using the multiplicative property of exponential, we have that
    \begin{align*}
        Y_s & = \mathbb{E} [\exp(\mathfrak{i} \langle \lambda, \mathbb{X}^n_t \rangle) \vert \mathcal{F}_s] 
        \\
        & = \exp(\mathfrak{i}\langle \lambda, \mathbb{X}^n_s \rangle)\mathbb{E}[\exp(\mathfrak{i}\langle \lambda, \mathbb{X}^n_{t} - \mathbb{X}^n_{s} \rangle )\vert \mathbb{X}^n_s)] && \text{$\mathbb{X}^n_s$ is $\mathcal{F}_s$-measurable, Markov prop. of $\mathbb{X}^n$}
        \\
        & = \exp(\mathfrak{i}\langle \lambda, \mathbb{X}^n_s \rangle)\mathbb{L}_n(t-s, \mathbb{X}^n_s; \lambda)&& \text{$\mathbb{X}^n_s$ is time-homogeneous}.
    \end{align*}
We denote by $Z_s = \exp(\mathfrak{i}\langle \lambda, \mathbb{X}^n_s \rangle)$ and $F_n(s, \mathbb{X}^n_s; \lambda) = \mathbb{L}_n(t-s, \mathbb{X}^n_s; \lambda)$ use It\^o's product rule, we obtain
\begin{equation}
\label{eq:ito product}
    dY_s = dZ_s F_n(s, \mathbb{X}^n_s; \lambda) + Z_s dF_n(s, \mathbb{X}^n_s; \lambda) + [dZ_s,dF_n(s, \mathbb{X}^n_s; \lambda)].
\end{equation}
By It\^o's formula, it is easy to obtain the SDE driven by $Z$ and $F_n$. By direct computation, we have
\begin{align}
    dZ_s & = Z_s \bigg(\mathfrak{i} \sum_{j=1}^{D_{n-1}} \lambda_j \mu^{(j)}_n(\mathbb{X}^n_s) - \frac{1}{2}\sum_{k_1=1}^{D_{n-1}}\sum_{k_2=1}^{D_{n-1}} \lambda_{k_1}\lambda_{k_2}b^{(k_1,k_2)}_n(\mathbb{X}^n_s))\bigg)ds \notag
    \\
    & + \mathfrak{i}Z_s \sum_{k_1=1}^{D_{n-1}} \bigg(\sum_{k_2=1}^{D_{n-1}} \lambda_{k_2} \sigma_n^{(k_2, k_1)}(\mathbb{X}^n_s) dW_s\bigg)^{(k_1)}; \label{eq:SDE dZ}
    \\
    dF_n(s, \mathbb{X}^n_s; \lambda) & = \bigg(\frac{\partial}{\partial s} + A_n\bigg)F_n(s, \mathbb{X}^n_s; \lambda) ds \notag
    \\
    & + 
    \bigg( \sum_{k_1=1}^{D_{n-1}} \sum_{k_2=1}^{D_{n-1}}  \sigma_n^{(k_2,k_1)}(\mathbb{X}^n_s) \frac{\partial}{\partial \mathbbm{x}^{(k_2)}} \bigg) F_n(s, \mathbb{X}^n_s; \lambda) dW^{(k_1)}_s.\label{eq:SDE Fn}
\end{align}    
Substituting into the Equation \eqref{eq:ito product}, we have
\begin{align}
    dY_s & = Z_s D(s, \mathbb{X}^n_s;\lambda) ds + Z_s\Sigma((s, \mathbb{X}^n_s;\lambda)) dW_s;\notag
    \\
    D(s, \mathbb{X}^n_s;\lambda) & = \bigg(\frac{\partial}{\partial s} + A_n\bigg)F_n(s, \mathbb{X}^n_s; \lambda) \notag
    \\
    & + \mathfrak{i} \sum_{j_1=1}^{D_{n-1}} \sum_{j_2=1}^{D_{n-1}} \sum_{k=1}^{D_{n-1}} \lambda_{j_1} \sigma_n^{(j_1,k)}(\mathbb{X}^n_s)\sigma_n^{(j_2,k)}(\mathbb{X}^n_s) \frac{\partial F_n(s, \mathbb{X}^n_s; \lambda)}{\partial \mathbbm{x}^{(j_2)}}\notag
    \\
    & + \mathfrak{i} \sum_{j=1}^{D_{n-1}} \lambda_j \mu^{(j)}_n(\mathbb{X}^n_s) - \frac{1}{2}\sum_{k_1=1}^{D_{n-1}}\sum_{k_2=1}^{D_{n-1}} \lambda_{k_1}\lambda_{k_2}b^{(k_1,k_2)}_n(\mathbb{X}^n_s);\label{eq: martingale drift}
    \\
    \Sigma(s, \mathbb{X}^n_s;\lambda) & = \sum_{k_2=1}^{D_{n-1}} \lambda_{k_2} \sigma_n^{(k_2, \cdot)}(\mathbb{X}^n_s) + \sum_{k_2=1}^{D_{n-1}}  \sigma_n^{(k_2,\cdot)}(\mathbb{X}^n_s) \frac{\partial }{\partial \mathbbm{x}^{(k_2)}}F_n(s, \mathbb{X}^n_s;\lambda).\label{eq: martingale diffusion}
\end{align}
Since $Z$ is non-zero and $Y$ is a martingale, we must have that $D(s, \mathbbm{x}; \lambda) = 0$ for any $s\in [0, t]$. Rearranging the terms for $D(s, \mathbbm{x}; \lambda)$ we obtain
\begin{align*}
    & \sum_{j=1}^{D_{n-1}} \lambda_j \mu^{(j)}_n(\mathbbm{x}) = (M_{\lambda} \circ \mu_n)(\mathbbm{x});
    \\
    & \sum_{k_1=1}^{D_{n-1}}\sum_{k_2=1}^{D_{n-1}} \lambda_{k_1}\lambda_{k_2}b^{(k_1,k_2)}_n(\mathbbm{x}) = (M_{\lambda}^{\otimes 2}\circ b_n)(\mathbbm{x});
    \\
    & \sum_{j_1=1}^{D_{n-1}} \sum_{j_2=1}^{D_{n-1}} \sum_{k=1}^{D_{n-1}} \lambda_{j_1} \sigma_n^{(j_1,k)}(\mathbbm{x})\sigma_n^{(j_2,k)}(\mathbbm{x}) \frac{\partial F_n}{\partial \mathbbm{x}^{(j_2)}} = \left(\sum_{j=1}^{D_{n-1}}(M_{\lambda}\circ b_n^{(\cdot, j)})(\mathbbm{x})\frac{\partial }{\partial \mathbbm{x}^{(j)}}\right)F_n.
\end{align*}
Finally, by setting $s = 0$, we have
\begin{align*}
    F_n(0, \mathbbm{x}; \lambda) & = \mathbb{L}_n(t, \mathbbm{x}; \lambda);
    \\
    \frac{\partial}{\partial s} F_n(0, \mathbbm{x}; \lambda) & = - \frac{\partial}{\partial t}\mathbb{L}_n(t, \mathbbm{x}; \lambda);
    \\
    \frac{\partial}{\partial \mathbbm{x}^{(j)}} F_n(0, \mathbbm{x}; \lambda) & = \frac{\partial}{\partial \mathbbm{x}^{(j)}}\mathbb{L}_n(t, \mathbbm{x}; \lambda);
    \\
    \frac{\partial^2}{\partial \mathbbm{x}^{(j_1)}\partial \mathbbm{x}^{(j_2)}} F_n(0, \mathbbm{x}; \lambda) & = \frac{\partial^2}{\partial \mathbbm{x}^{(j_1)}\partial \mathbbm{x}^{(j_2)}}\mathbb{L}_n(t, \mathbbm{x}; \lambda).
\end{align*}
Hence, the Equation \eqref{eq:general pde} must be satisfied by $\mathbb{L}_n$. 
\\
Next, we provide the proof for the second part of the theorem, which guarantees the solution to the PDE in Equation \eqref{eq:general pde} subject to the initial condition $\mathbb{L}_n(0, \mathbbm{x}; \lambda) = 1$ is indeed the characteristic function we stated in Equation \eqref{eq:char generalized-signature process}. Let $f_n(t, \mathbbm{x};\lambda)$ be a solution to the Equation \eqref{eq:general pde} with initial condition $f_n(0, \mathbbm{x};\lambda) = 1$. Fix $t>0$, define $F_n(s, \mathbb{X}^n_s; \lambda) = f_n(t-s, \mathbb{X}^n_s; \lambda)$ and consider the process
\begin{equation*}
    N_s = Z_s F_n(s, \mathbb{X}^n_s; \lambda),\quad 0<s<t,
\end{equation*}
where $Z_s = \exp(\mathfrak{i}M_\lambda(\mathbb{X}^n_s))$. Using It\^o's product rule we have
\begin{equation}
\label{eq:ito product_2}
    dN_s = dZ_s F_n(s, \mathbb{X}^n_s; \lambda) + Z_s dF_n(s, \mathbb{X}^n_s; \lambda) + dZ_s dF_n(s, \mathbb{X}^n_s; \lambda),
\end{equation}
By It\^o's formula, it is easy to obtain the SDE driven by $Z_s$ and $F_n(s, \mathbb{X}^n_s; \lambda)$, as in Equations \eqref{eq:SDE dZ} and $\eqref{eq:SDE Fn}$. Substituting into the Equation \eqref{eq:ito product_2}, we have
\begin{align*}
    dN_s & = Z_s D(s, \mathbb{X}^n_s;\lambda) ds + Z_s \Sigma(s, \mathbb{X}^n_s;\lambda) dW_s,
\end{align*}
where $D(s, \mathbb{X}^n_s;\lambda)$ and $\Sigma(s, \mathbb{X}^n_s;\lambda)$ are defined in Equations \eqref{eq: martingale drift} and \eqref{eq: martingale diffusion} respectively. By applying a change of variable $t' = t-s$ and using the definition of $f_n$ we have $D(s, \mathbbm{x};\lambda) = 0$. Hence, $N_s$ is a local martingale. The diffusion term is given by $Z_s\Sigma(s, \mathbb{X}^n_s;\lambda)$.
Since it is assumed that, $\sigma_n$ satisfies the linear growth condition and $\frac{\partial f_n}{\partial \mathbbm{x}}$ is continuous and satisfies the polynomial growth condition, we conclude that $\Sigma(s, \mathbb{X}^n_s;\lambda)$ is uniformly $L_2$-integrable (see Lemma \ref{lemma:regularity conditional expectation}). Then, by Funibi-Tonelli's theorem we have
\begin{equation*}
    \mathbb{E}\bigg[\int_0^s Z^2_{\tau} \Sigma^2(\tau, \mathbb{X}^n_{\tau} ;\lambda) d{\tau} \bigg] = \int_0^s \mathbb{E} [Z^2_{\tau} \Sigma^2(\tau, \mathbb{X}^n_{\tau} ;\lambda) ]d{\tau} < \infty,
\end{equation*}
hence $N_s$ is a martingale. Finally, evaluating $N$ at $s = 0$, we obtain
\begin{align*}
     N_0 &= Z_0 f_n(t, \mathbb{X}^n_0; \lambda) = \mathbb{E}[N_t|\mathcal{F}_0] = \mathbb{E}[\exp(\mathfrak{i} M_\lambda(\mathbb{X}^n_t) | \mathcal{F}_0];
    \\
    f_n(t, \mathbb{X}^n_0; \lambda) & = \mathbb{E}[\exp(\mathfrak{i} M_\lambda(\mathbb{X}^n_t-\mathbb{X}^n_0) | \mathbb{X}^n_0 =\mathbbm{x}] = \mathbb{L}_n(t, \mathbb{X}^n_0; \lambda),
\end{align*}
noting the equality $\mathbb{X}^n_0 = \mathbbm{x}$ completes the proof.
\end{proof}

\begin{remark}
We provide in the Appendix \ref{appendix: alternative proof of main theorem} a similar result to Theorem \ref{theorem: general char pde} assuming that the characteristic function $\mathbb{L}_n$ admits a Taylor expansion. By imposing the additional condition, we are able to establish the relationship between the $\mathcal{L}_n$ and the expected signature of $\mathbb{X}^n_{[0, t]}$ and we showed the relation is linear. The rest of the procedure relies on the characterization of the expected signature of a time-homogeneous It\^o diffusion via the parabolic PDE \cite{ni2012expected_signature}.

\end{remark}
\section{Conditional characteristic function of L\'evy area}
\label{sec: levy derivation}
In this section, we study a particular case of the linear transform on the signature of Brownian motions. Let $W = (W_t^{(1)},\dots , W_t^{(d)})_{t\in[0,T]}$ be a $d$-dimensional Brownian motion. Consider the process
\begin{equation}
\label{eq:almost_levy}
    L^{(i,j)}_t = \frac{1}{2}\bigg(\int_0^t W_s^{(i)} dW_s^{(j)} - \int_0^t W_s^{(j)} dW_s^{(i)}\bigg), \quad t\in [0, T],
\end{equation}
for any $1\leq i,j \leq d$. This process depends on the initial point of $W_0$ and links with L\'evy area. More specifically, when $W_0=0$, $L^{(i,j)}_t$ is the L\'evy area of $i$-th and $j$-th coordinates of a standard Brownian motion from time $0$ to $t$. We denote by $L_t = (L_t^{(i,j)})_{1\leq i<j\leq d}$ the $\frac{d(d-1)}{2}$-dimensional process. Denote by $skew(d)$ the space of $d$-dimensional skew-symmetric matrices and let $\Lambda \in skew(d)$, consider the sum process $L_t^{\Lambda} = \sum_{1\leq i< j \leq d} \Lambda_{i,j} L^{(i,j)}_t$. We explore the structure of $L_t^{\Lambda}$ by noting that it satisfies the SDE
\begin{equation}
\label{matrix_form_of_L}
    dL_t^{\Lambda} = 
\frac{1}{2}W_t^T \Lambda dW_t,\quad L^{\Lambda}_0 = 0.
\end{equation}
In particular, we are interested in the characteristic function of $L^{\Lambda}_t$ conditional on the starting point of Brownian motion, namely, 
\begin{align}
    \mathcal{L}(t, w; \cdot) & : skew(d) \to \mathbb{C};\notag
    \\
    (t,w; \Lambda) & \mapsto \mathbb{E}\bigg[\exp \bigg(\mathfrak{i} \frac{1}{2} \int_0^t W_s^T \Lambda dW_s\bigg) \bigg| W_0 = w\bigg] = \mathbb{E}[\exp(\mathfrak{i} L_t^{\Lambda}) | W_0 = w].\label{eq: conditional char levy}
\end{align}
In fact, $\mathcal{L}$ is a building block for deriving the analytical form of the joint characteristic function of Brownian motion coupled with its L\'evy area (See Section \ref{sec: Characteristic function of coupled Brownian motion and Levy area}, Lemma \ref{Lemma_link_L_and_Psi}). In contrast to \cite{helmes1983levy}, our proof does not rely on the change of measure via Girsanov's theorem and analytical continuation techniques used in complex analysis.  We instead discover a simplified PDE stated in Theorem \ref{theorem: general char pde} to this problem, which enables us to reduce the parabolic PDE into a system of Riccati equations for the case that all the eigenvalues of $\Lambda$ are non-zero.
\\
We establish in this section the relationship between $\mathcal{L}$ and the characteristic function of the generalized signature. To this end, we consider the generalized signature process of $W$ truncated at degree $2$:
\begin{equation*}
    \mathbb{X}^2_t = \bigg(1, \int_0^t \circ dW_s, \int_0^t \int_0^s (\circ dW_u) \otimes (\circ dW_s) + \rho^1(\mathbbm{x}) \otimes  \int_0^t \circ dW_s \bigg),\quad \mathbb{X}^2_0=\mathbbm{x}.
\end{equation*}
Let $\mathbb{L}_2(t, \mathbbm{x}; \lambda)$ be the characteristic function of $\mathbb{X}_2$, then we have the following lemma.

\begin{proposition}
For any $(t, w) \in \mathbb{R}^{+} \times \mathbb{R}^d$ and $\Lambda \in skew(d)$, it holds that
\begin{equation*}
        \mathcal{L}(t,w;\Lambda) = \mathbb{L}_2\left(t,(1, w); (0, 0, \frac{1}{2}\Lambda)\right).
    \end{equation*}
\end{proposition}
\begin{proof}
    It is proven by directly comparing the definition of $\mathcal{L}$ and $\mathbb{L}_2$.
\end{proof}
From now on, we stick with the notation $\mathcal{L}$ throughout this chapter for simplicity. 

In the following Lemma, we derive for any $\Lambda \in skew(d)$, a simplified PDE which $\mathcal{L}(t,w; \Lambda)$ satisfies. Moreover, the simplified PDE could be transformed into a system of Riccati equations and solved analytically. 

\begin{lemma}[PDE of the characteristic function $\mathcal{L}(t,w; \Lambda)$]
\label{PDE_of_conditional_Levy_area}
    Fix $\Lambda\in skew(d)$, then, $\mathcal{L}$ defined in Equation \eqref{eq: conditional char levy} is the solution to the following PDE
    \begin{equation}
    \label{main}
        \bigg(-\frac{\partial}{\partial t} + \frac{1}{2} \Delta \bigg) \mathcal{L}(t,w; \Lambda) + \frac{\mathfrak{i}}{2}\bigg(\sum_{j=1}^d (w^T \Lambda)_j \frac{\partial \mathcal{L}(t,w; \Lambda)}{\partial w_j} \bigg) - \frac{1}{8} w^T \Lambda \Lambda^T w \mathcal{L}(t,w; \Lambda) = 0,
    \end{equation}
    with initial condition $\mathcal{L}(0,w; \Lambda)=1$, where $\Delta$ is the Laplacian operator and $(t, w) \in \mathbb{R}^+\times \mathbb{R}^d$.
\end{lemma}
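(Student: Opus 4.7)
The plan is to establish Equation \eqref{main} via a martingale argument applied directly to the bounded complex-valued process $\exp(iL_t^\Lambda)$, in the spirit of the proof of Theorem \ref{theorem: general char pde} but specialized to the Markov pair $(W, L^\Lambda)$. In principle the PDE also follows by combining Theorem \ref{theorem: general char pde} with Lemma \ref{lemma: L_2 equals to L_2}, but because $\tilde\lambda$ depends on $w$ in that lemma, a direct substitution would produce chain-rule correction terms involving derivatives of $\mathbb{L}_2$ with respect to $\tilde\lambda$; working with $(W, L^\Lambda)$ from the outset sidesteps this book-keeping.

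Fix $t>0$ and define the bounded martingale $M_s := \mathbb{E}[\exp(iL_t^\Lambda) \mid \mathcal{F}_s]$ for $s\in[0,t]$. The first step is the factorization
\[
M_s = \exp(iL_s^\Lambda)\,\mathcal{L}(t-s, W_s; \Lambda).
\]
This follows from the fact that the pair $(W, L^\Lambda)$ is a time-homogeneous Markov process: given $\mathcal{F}_s$, the increment $L_t^\Lambda - L_s^\Lambda = \frac{1}{2}\int_s^t W_u^T\Lambda\,dW_u$ can be rewritten, via $u = s+r$, as $\frac{1}{2}\int_0^{t-s} \hat W_r^T \Lambda\,d\hat W_r$, where $\hat W_r := W_{s+r}$ is, conditionally on $\mathcal{F}_s$, a Brownian motion started at $W_s$. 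Hence $\mathbb{E}[\exp(i(L_t^\Lambda - L_s^\Lambda))\mid \mathcal{F}_s] = \mathcal{L}(t-s, W_s; \Lambda)$ by definition of $\mathcal{L}$, and pulling out the $\mathcal{F}_s$-measurable factor $\exp(iL_s^\Lambda)$ completes the factorization.

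With the factorization in hand, I would set $Z_s := \exp(iL_s^\Lambda)$ and $F(s,w) := \mathcal{L}(t-s,w;\Lambda)$ and apply It\^o's product rule to $M_s = Z_s F(s, W_s)$. Using $dL_s^\Lambda = \frac{1}{2}W_s^T\Lambda\,dW_s$ and $d[L^\Lambda]_s = \frac{1}{4}W_s^T\Lambda\Lambda^T W_s\,ds$, It\^o's formula gives $dZ_s = \frac{i}{2}Z_s W_s^T\Lambda\,dW_s - \frac{1}{8}Z_s W_s^T\Lambda\Lambda^T W_s\,ds$, while It\^o's formula applied to $F$ produces the drift $\partial_s F + \frac{1}{2}\Delta F$ and the diffusion $\nabla F \cdot dW_s$. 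Collecting the two drift contributions together with the cross-variation term $\frac{i}{2}Z_s(W_s^T\Lambda)\cdot\nabla F\,ds$ and equating the total drift of $M_s$ to zero (since $Z_s$ never vanishes and $M_s$ is a martingale) yields
\[
\partial_s F + \frac{1}{2}\Delta F + \frac{i}{2}(w^T\Lambda)\cdot \nabla F - \frac{1}{8}w^T\Lambda\Lambda^T w\, F = 0.
\]
Substituting $\partial_s F = -\partial_t \mathcal{L}$ and evaluating at $s = 0$ produces exactly Equation \eqref{main}; the initial condition $\mathcal{L}(0,w;\Lambda) = 1$ is immediate from $L_0^\Lambda = 0$.

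The main obstacle is conceptual rather than computational: committing to the right Markov structure. Attempting to route the argument through Theorem \ref{theorem: general char pde} applied to the generalized-signature process $\mathbb{X}^2$ of $W$ forces one to confront the fact that Lemma \ref{lemma: L_2 equals to L_2} encodes the $w$-dependence of $\mathcal{L}$ simultaneously through $\mathbbm{x} = (1,w,0)$ and through the parameter $\tilde\lambda(w,\Lambda)$, so the chain rule couples $\partial_w \mathcal{L}$ to derivatives of $\mathbb{L}_2$ with respect to $\tilde\lambda$ that do not appear in the general PDE. Switching to the Markov pair $(W, L^\Lambda)$ resolves this by making $\mathcal{L}$ a genuine transition functional, after which the It\^o calculus is routine and the antisymmetry of $\Lambda$ is used only implicitly (through the interpretation of $L^\Lambda$) rather than in the PDE derivation itself.
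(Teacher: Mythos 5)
Your proof is correct, but it takes a genuinely different route from the paper's. The paper obtains Equation \eqref{main} as a direct specialisation of Theorem \ref{theorem: general char pde} with $n=2$: it invokes Lemma \ref{lemma: L_2 equals to L_2} to write $\mathcal{L}(t,w;\Lambda)=\mathbb{L}_2(t,(1,w);\tilde\lambda)$ with $\tilde\lambda=(0,-\tfrac12 w^T\Lambda,\tfrac12\Lambda)$, computes the coefficients $M_{\tilde\lambda}\circ\mu_2=0$, $M_{\tilde\lambda}\circ b_2^{(\cdot,j)}=\tfrac12(w^T\Lambda)_j$ and $M_{\tilde\lambda}^{\otimes 2}\circ b_2=\tfrac14 w^T\Lambda\Lambda^T w$ from the block structure of $\sigma_2$, and observes that $A_2$ collapses to $\tfrac12\Delta$ because $\mathbb{L}_2$ depends only on the degree-one component of the starting point. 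You instead re-run the martingale argument of Theorem \ref{theorem: general char pde} directly on the Markov pair $(W,L^\Lambda)$: the factorisation $M_s=\exp(iL_s^\Lambda)\,\mathcal{L}(t-s,W_s;\Lambda)$, the It\^o product rule with $d[L^\Lambda]_s=\tfrac14 W_s^T\Lambda\Lambda^T W_s\,ds$ and the cross-variation term $\tfrac{i}{2}Z_s(W_s^T\Lambda)\cdot\nabla F\,ds$, and the vanishing of the drift of a bounded continuous martingale. The computations check out and reproduce \eqref{main} exactly. What your route buys is self-containedness and, more importantly, it genuinely avoids the subtlety you identify: in the paper's argument the parameter $\tilde\lambda$ depends on $w$, so the $w$-derivatives supplied by Theorem \ref{theorem: general char pde} (taken at frozen $\lambda$) are not literally the $w$-derivatives of $w\mapsto\mathbb{L}_2(t,(1,w);\tilde\lambda(w))=\mathcal{L}(t,w;\Lambda)$, and the paper makes this identification without comment; your derivation certifies the PDE for the total $w$-derivatives directly. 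What the paper's route buys is that the lemma is exhibited as an application of the general theorem, which is the stated purpose of Section \ref{sec: levy derivation}. Two small points you should make explicit to match the rigour of the surrounding text: the argument needs $\mathcal{L}\in C^{1,2}$ in $(t,w)$ to apply It\^o's formula (the same implicit assumption as in the first part of Theorem \ref{theorem: general char pde}), and the conclusion ``drift $=0$'' follows because $M_s$ is a \emph{bounded} (hence true) martingale whose finite-variation part must vanish, together with the non-vanishing of $\exp(iL_s^\Lambda)$.
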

\begin{proof}
Let $\mathbb{X}^2$ be the generalized-signature process of $W$ with $\rho^1(\mathbb{X}^2_0) = W_0 = w$. Let $\mu_2$ and $\sigma_2$ be the drift and diffusion of $\mathbb{X}^2$. Let $\Lambda \in skew(d)$, consider $\Tilde{\lambda} = (0,0,\frac{1}{2}\Lambda)$ and $\mathbb{L}_2(t, (1, w); \Tilde{\lambda})$. It is clear that $L^{\Lambda}_t = M_{\Tilde{\lambda}}(\mathbb{X}^2_t)$. 
    Then, by comparing the SDE driven by $L^{\Lambda}_t$ and $M_{\Tilde{\lambda}}(\mathbb{X}^2_t)$, and using the fact that $M_{\Tilde{\lambda}}$ is linear we have
    \begin{equation*}
        M_{\Tilde{\lambda}} \circ \mu_2 = 0,\quad M_{\Tilde{\lambda}} \circ \sigma_2 = \frac{1}{2} w^T\Lambda.
    \end{equation*}
    Furthermore, by representing $\sigma_2$ as a matrix, we know that
    \begin{equation*}
    \sigma_2(w) = 
\begin{pmatrix}
    I_d \\
    \\
    B(w)
\end{pmatrix},\quad 
b_2(w) = \sigma_2 \sigma^T_2(w) =  \begin{pmatrix}
    I_d & B^T(w)\\
    \\
    B(w) & B\circ B^T(w)
\end{pmatrix},
\end{equation*}
for some matrix $B(w)$. Using the block nature of $\sigma_2$ we see that $b_2^{(\cdot, j)}$ is actually the $j$-th column of $\sigma$ for $1\leq j \leq d$. Therefore, 
\begin{align*}
    M_{\Tilde{\lambda}} \circ b_2^{(\cdot, j)}(w) & = \frac{1}{2} (w^T \Lambda)_j,\quad 1\leq j\leq d;
    \\
    M_{\Tilde{\lambda}}^{\otimes 2}\circ b_2 (w) & = M_{\Tilde{\lambda}}^{\otimes 2}\circ (\sigma_2 \sigma^T_2) = (M_{\Tilde{\lambda}} \circ \sigma_2) (M_{\Tilde{\lambda}} \circ \sigma_2)^T = \frac{1}{4} w^T\Lambda \Lambda^T w.
\end{align*}
Using Theorem \ref{theorem: general char pde} we know that $\mathbb{L}_2(t, (1, w); \Tilde{\lambda})$ satisfies the following PDE
\begin{align}
\label{eq:intermediate eq}
    \bigg(-\frac{\partial}{\partial t} + A_2 \bigg) \mathbb{L}_2(t, (1, w); \Tilde{\lambda}) & + \frac{\mathfrak{i}}{2}\bigg(\sum_{j=1}^d (w^T \Lambda)_j \frac{\partial \mathbb{L}_2(t, (1, w); \Tilde{\lambda})}{\partial w_j} \bigg) 
    \\
    & - \frac{1}{8} w^T \Lambda \Lambda^T w \mathbb{L}_2(t, (1, w); \Tilde{\lambda}) = 0,
\end{align}
with initial condition $\mathbb{L}_2(0, (1, w); \Tilde{\lambda})= 1$. Since $\mathbb{L}_2$ only depends on $w$ and $M_{\Tilde{\lambda}} \circ \mu_2 = 0$, the infinitesimal generator $A_2$ is the Laplacian. Thus Equation \eqref{eq:intermediate eq} becomes
\begin{align*}
    \bigg(-\frac{\partial}{\partial t} + \frac{1}{2} \Delta \bigg) \mathbb{L}_2(t, (1, w); \Tilde{\lambda}) & + \frac{\mathfrak{i}}{2}\bigg(\sum_{j=1}^d (w^T \Lambda)_j \frac{\partial \mathbb{L}_2(t, (1, w); \Tilde{\lambda})}{\partial w_j} \bigg) \\
    & - \frac{1}{8} w^T \Lambda \Lambda^T w \mathbb{L}_2(t, (1, w); \Tilde{\lambda}) = 0.
\end{align*}
Finally, we complete the proof by replacing $\mathbb{L}_2(t, (1, w); \Tilde{\lambda})$ with $\mathcal{L}(t,w; \Lambda)$.
\end{proof}

We derive the analytic formula for $\mathcal{L}(t, w; \Lambda)$ by exploiting the anti-symmetric property of matrix $\Lambda$. To do so, we first recall here a structural result on anti-symmetric matrices that will help us.
\begin{lemma}[Decomposition of anti-symmetric matrix]\label{lem_antisym_matrix_decomposition}
For any $d \times d$ anti-symmetric real-valued matrix $\Lambda$, let $(\pm \eta_{1}i,\dots, \pm \eta_{d_1}i)$ be the set of non-zero conjugate eigenvalue pairs of $\Lambda$ with $\eta_i > 0$ and $\eta_1\geq,\dots, \geq \eta_{d_1}$. Let $d_0 = d-2d_1$ be the algebraic multiplicity of the eigenvalue 0 of $\Lambda$ (if $\Lambda$ does not have zero eigenvalue, then $d_0=0$). Then there exists an orthogonal matrix $O$, such that the following decomposition of $\Lambda$ holds:
\begin{eqnarray*}
\Lambda = O^T \Sigma O,
\end{eqnarray*}
where $\Sigma$ is in the form that
\begin{eqnarray*}
\Sigma = \begin{pmatrix}
0 & -\eta_{1} & 0 &0& \dots & 0 &0 & 0 & \dots&0\\
\eta_{1} & 0 & 0 & 0 & \dots &  0&0& 0 & \dots&0 \\
0 & 0  & 0& -\eta_{2} & \dots & 0&0& 0 & \dots&0\\
0 &0& \eta_{2} &0 &\dots &0&0&0 & \vdots&0\\
\vdots & \vdots & \vdots & \vdots&\ddots&\vdots &\vdots&\vdots &\dots& \vdots\\
0 & 0 & \dots & 0&\dots &0& -\eta_{d_1} &0& \dots  & 0  \\
0 & 0 & \dots & 0&\dots &\eta_{d_1} & 0 &  0& \dots  & 0 \\
0 & 0 & \dots & 0 & 0 &0 & 0 &0& \dots  & 0 \\
0 & 0 & \dots & 0 & 0 &0 & 0 & \vdots & \ddots&\vdots\\
0 & 0 & \dots & 0 & 0 &0 & 0 & 0 & \dots&0\\
\end{pmatrix}:=\begin{pmatrix}
\Sigma_0,& \mathbf{0}_{d-d_0, d_0}\\
\mathbf{0}_{d_0, d-d_0}&\mathbf{0}_{d_0,d_0}
\end{pmatrix},
\end{eqnarray*}
where $\Sigma_0$ is the block diagonal matrix with all non-zero $\eta_i$ and $\mathbf{0}_{m,n}$ is the zero matrix of dimension $m\times n$..
\end{lemma}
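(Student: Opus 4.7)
The plan is to obtain the decomposition as the real normal form for a skew-symmetric matrix, proceeding by complexification and then assembling a real orthonormal basis from the real and imaginary parts of the complex eigenvectors. First I would observe that, as a real matrix satisfying $\Lambda^T = -\Lambda$, the matrix $i\Lambda$ is Hermitian. Hence $i\Lambda$ is unitarily diagonalizable over $\mathbb{C}$ with real eigenvalues, which means $\Lambda$ itself is unitarily diagonalizable with purely imaginary spectrum. Moreover, since $\Lambda$ is real, its non-real eigenvalues occur in conjugate pairs: whenever $\Lambda v = i\eta v$ with $\eta \neq 0$, one also has $\Lambda \bar v = -i\eta \bar v$. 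This already identifies the spectrum as $\{\pm i\eta_1, \dots, \pm i\eta_{d_1}\} \cup \{0, \dots, 0\}$ with $d_0 = d - 2d_1$ zero eigenvalues.

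Next I would build the orthogonal matrix $O$. For each pair $\pm i\eta_j$ pick a complex unit eigenvector $v_j$ with $\Lambda v_j = i\eta_j v_j$, chosen so that $\{v_1, \bar v_1, \dots, v_{d_1}, \bar v_{d_1}\}$ is Hermitian-orthonormal (possible because eigenvectors for distinct eigenvalues of the normal operator $\Lambda$ are orthogonal, and within a given eigenspace one can orthonormalize). Writing $v_j = (u_j - i w_j)/\sqrt{2}$ with $u_j, w_j$ real, a short computation shows that the Hermitian orthonormality of $v_j, \bar v_j$ forces $u_j, w_j$ to be real orthonormal vectors, and orthogonality between different pairs gives orthogonality of $\{u_j, w_j\}$ across $j$. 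The eigenvalue relation $\Lambda(u_j - iw_j) = i\eta_j(u_j - iw_j)$ splits into real and imaginary parts as $\Lambda u_j = \eta_j w_j$ and $\Lambda w_j = -\eta_j u_j$, so that the 2-plane $\mathrm{span}\{u_j, w_j\}$ is $\Lambda$-invariant and $\Lambda$ acts there in the basis $(u_j, w_j)$ by the matrix $\begin{pmatrix} 0 & -\eta_j \\ \eta_j & 0 \end{pmatrix}$. For the kernel, I would take any real orthonormal basis $\{z_1, \dots, z_{d_0}\}$ of $\ker \Lambda$ (which has the correct dimension because the $2d_1$ non-zero eigenvalues account for the remaining directions). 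These kernel vectors are orthogonal to all $u_j, w_j$ because the zero eigenspace of the normal operator $\Lambda$ is orthogonal to the non-zero eigenspaces.

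Finally, I would define $O$ to be the $d \times d$ orthogonal matrix whose rows are $u_1, w_1, u_2, w_2, \dots, u_{d_1}, w_{d_1}, z_1, \dots, z_{d_0}$ in that order. By construction $O$ is orthogonal, and the action computation in the previous paragraph gives exactly $O \Lambda O^T = \Sigma$, equivalently $\Lambda = O^T \Sigma O$, with $\Sigma$ of the stated block-diagonal form. Ordering the pairs by $\eta_1 \geq \dots \geq \eta_{d_1}$ is a cosmetic relabelling of the indices $j$.

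The only technical obstacle is the bookkeeping in the middle step: one must verify that the choice $v_j = (u_j - i w_j)/\sqrt{2}$ (rather than $(u_j + i w_j)/\sqrt{2}$) yields the precise off-diagonal sign pattern $\begin{pmatrix} 0 & -\eta_j \\ \eta_j & 0 \end{pmatrix}$ required by the statement, and that the orthonormalization done in the complex inner product really descends to an orthonormal pair $(u_j, w_j)$ in the real inner product. Both are routine but do require care with conventions; everything else reduces to the spectral theorem for the normal matrix $\Lambda$.
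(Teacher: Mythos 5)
Your proof is correct, and it is the standard spectral-theoretic argument (complexify, diagonalize the Hermitian matrix $i\Lambda$, pair conjugate eigenvectors, and pass to real and imaginary parts to get the $2\times 2$ rotation-like blocks); the sign bookkeeping you flag does work out with the convention $v_j=(u_j-iw_j)/\sqrt{2}$, which yields $\Lambda u_j=\eta_j w_j$, $\Lambda w_j=-\eta_j u_j$ and hence the block $\bigl(\begin{smallmatrix}0&-\eta_j\\ \eta_j&0\end{smallmatrix}\bigr)$ when $O$ has rows $u_1,w_1,\dots$. The paper does not prove this lemma at all — it simply cites it as a classical result from Greub — so your argument supplies exactly the omitted classical proof rather than taking a different route.
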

\begin{proof}
    This is a classical result from spectral theory, check for example \cite{Greub1967}.
\end{proof}
\begin{lemma}[Analytic formula for $\mathcal{L}(t, w; \Lambda)$]\label{Lemma_formula_CharFunc_levy_area}
Let $d \geq 2$ be an even integer. Let $\Lambda$ be any skew-symmetric, invertible real-valued matrix of dimension $d \times d$. Then it holds that  
\begin{equation}
\label{formula_L}
    \mathcal{L}(t,w; \Lambda) = \bigg(\prod_{i=1}^{d'} \frac{1}{\cosh (\frac{\gamma_i}{2}t )} \bigg) \exp\bigg(\sum_{i=1}^{d'} -\frac{\eta_i}{4} ((Ow)^2_{2i-1} + (Ow)^2_{2i}) \tanh(\frac{\eta_i}{2}t)\bigg),
\end{equation}
where $d' = \frac{d}{2}$, $O,\ \eta $ are defined in Lemma \ref{lem_antisym_matrix_decomposition}.
\end{lemma}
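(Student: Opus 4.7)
The plan is to solve the PDE of Lemma \ref{PDE_of_conditional_Levy_area} by exploiting the block-diagonal structure guaranteed by Lemma \ref{lem_antisym_matrix_decomposition}. Since $\Lambda = O^T \Sigma O$ with $O$ orthogonal, I would first perform the linear change of variable $w' = Ow$. The Laplacian is invariant under orthogonal transformations, and the quadratic forms transform as $w^T \Lambda \nabla_w = (w')^T \Sigma \nabla_{w'}$ and $w^T \Lambda \Lambda^T w = (w')^T \Sigma \Sigma^T w'$. Because $\Lambda$ is assumed invertible and $d$ is even, $\Sigma$ is a pure block-diagonal matrix with $d' = d/2$ blocks of the form $\left(\begin{smallmatrix} 0 & -\eta_i \\ \eta_i & 0 \end{smallmatrix}\right)$, so both the first-order drift term and the potential term split into independent contributions from each coordinate pair $(w'_{2i-1}, w'_{2i})$.

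Next I would propose the separable Ansatz
\begin{equation*}
\mathcal{L}(t,w;\Lambda) \;=\; \prod_{i=1}^{d'} \exp\!\Bigl(a_i(t)\bigl((Ow)_{2i-1}^2 + (Ow)_{2i}^2\bigr) + b_i(t)\Bigr),
\end{equation*}
motivated by the block decoupling, with initial condition $a_i(0)=b_i(0)=0$ matching $\mathcal{L}(0,w;\Lambda)=1$. Writing $u = (Ow)_{2i-1}$, $v = (Ow)_{2i}$ for a single pair, direct differentiation shows that the first-order transport term $\frac{i}{2}(w^T\Lambda)_j\partial_{w_j}$ contributes $\frac{i}{2}(\eta_i v \cdot 2a_i u - \eta_i u \cdot 2 a_i v) = 0$ on this pair; this cancellation is precisely what makes the Ansatz work and is the reason for the anti-symmetry of $\Lambda$. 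The remaining Laplacian and potential terms reduce the PDE to the scalar identity
\begin{equation*}
-\bigl(a_i'(u^2+v^2) + b_i'\bigr) + \bigl(2a_i + 2a_i^2(u^2+v^2)\bigr) - \tfrac{\eta_i^2}{8}(u^2+v^2) \;=\; 0.
\end{equation*}

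Matching the coefficient of $u^2+v^2$ and the constant term separately, I obtain for each $i$ the system
\begin{equation*}
a_i'(t) = 2a_i(t)^2 - \tfrac{\eta_i^2}{8}, \qquad b_i'(t) = 2 a_i(t),\qquad a_i(0)=b_i(0)=0.
\end{equation*}
The Riccati equation is integrated explicitly via the substitution $a_i = -\tfrac{\eta_i}{4}\tanh(\varphi_i)$ and yields $a_i(t) = -\tfrac{\eta_i}{4}\tanh\!\bigl(\tfrac{\eta_i}{2}t\bigr)$; then $b_i(t) = -\log\cosh\!\bigl(\tfrac{\eta_i}{2}t\bigr)$ follows by direct integration. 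Substituting back gives exactly the formula~\eqref{formula_L}.

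The main obstacle is not the ODE step but rather the justification that the Ansatz yields \emph{the} conditional characteristic function. For this I would invoke the converse direction of Theorem \ref{theorem: general char pde}: the constructed candidate, through Lemma \ref{lemma: L_2 equals to L_2}, corresponds to a smooth solution of the PDE for $\mathbb{L}_2$ whose spatial gradient is quadratic-exponential in $w$ and hence (being bounded in modulus by $1$ together with its first derivatives being of polynomial growth in $w$) falls under the uniqueness clause of Theorem \ref{theorem: general char pde}. Some care is required in checking that the polynomial growth condition on $\partial_{\mathbbm{x}} f_n$ really is verified uniformly on $[0,T]$, which follows from the uniform boundedness of $\tanh(\eta_i t/2)$ and $1/\cosh(\eta_i t/2)$.
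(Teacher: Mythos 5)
Your proposal is correct and follows essentially the same route as the paper's proof: the same quadratic-exponential Ansatz in the rotated coordinates $(Ow)$, the same cancellation of the first-order transport term via the anti-symmetry of $\Sigma$, and the same Riccati system $a_i' = 2a_i^2 - \tfrac{\eta_i^2}{8}$, $b_i' = 2a_i$ with identical solutions (the paper writes the Ansatz as $g(t)\exp(\sum_i h_i(t)y_i)$ with $y_i=(Ow)_i^2$, which coincides with your per-block form since $h_{2i-1}=h_{2i}$). Your closing appeal to the uniqueness clause of Theorem \ref{theorem: general char pde} to identify the constructed solution with $\mathcal{L}$ is a worthwhile addition that the paper's own proof leaves implicit.
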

\begin{proof}
Let $O$ be the orthogonal matrix obtained from $\Lambda$ by Lemma \ref{lem_antisym_matrix_decomposition}. If we choose the ansatz $\mathcal{L}(t,w;\Lambda) = f(t, y)$ where $y \coloneqq ((Ow)_1^2,\dots,(Ow)_d^2)$ we can use chain rule to obtain the following equalities
\begin{align*}
    \frac{\partial}{\partial t} \mathcal{L} & = \frac{\partial}{\partial t} f; 
    \\
    \frac{\partial}{\partial w_i} \mathcal{L} & = \sum_{j=1}^d  \frac{\partial f}{\partial y_j} \frac{d y_j}{d w_i} = \sum_{j=1}^d 2  \frac{\partial f}{\partial y_j} (Ow)_j (O)_{j,i};
    \\
    \frac{\partial^2}{\partial b_i^2} \mathcal{L}
    & = 
    \sum_{j=1}^d  \frac{\partial^2 f}{\partial y_j^2} \bigg( \frac{d y_j}{d w_i}\bigg)^2 + \sum_{j=1}^d  \frac{\partial f}{\partial y_j} \frac{d^2 y_j}{d w_i^2} 
    =
    4 \sum_{j=1}^d  \frac{\partial^2 f}{\partial y_j^2} (Ow)_j^2 (O_{j,i})^2 + 2 \sum_{j=1}^d  \frac{\partial f}{\partial y_j} (O_{j,i})^2.
\end{align*}
First, note that
\begin{equation}
\label{main1}
    \sum_{i=1}^d (w^T \Lambda)_i \frac{\partial \mathcal{L}(t,w;\Lambda)}{\partial w_i} = 2 \sum_{i=1}^d (w^T \Lambda)_i \sum_{j=1}^d   \frac{\partial f}{\partial y_j} (Ow)_j (O)_{j,i} = w^TO^T\Sigma \text{diag}\bigg(\frac{\partial f}{\partial y_i}\bigg)O w = 0,
\end{equation}
where $\text{diag}\bigg(\frac{\partial f}{\partial y_i}\bigg)$ is the diagonal matrix formed by the partial derivatives. In the penultimate equality, we express it in matrix form and by noting that $\Sigma \text{diag}\bigg(\frac{\partial f}{\partial y_i}\bigg)$ is anti-symmetric we have the last equality. Secondly, we have
\begin{equation}
\label{main2}
    w^T\Lambda\Lambda^Tw = \sum_{i=1}^{d'} \eta_i^2 (y_{2i-1} + y_{2i}),
\end{equation}
where $d' =  \frac{d}{2}$.
Before deriving the expression for the Laplacian of $\mathcal{L}$, we make the following ansatz: let $f(t,y) = g(t)\exp(\sum_i h_i(t) y_i))$, $i = 1,\dots, d$. The initial condition of $\mathcal{L}$ implies that $g(0) = 1,\ h_i(0)=0$ for all $i$ and
\begin{align}
\label{main3}
    \frac{1}{2\exp(\sum_i h_i(t) y_i)}\Delta \mathcal{L} & = 2 g(t) \sum_{i=1}^d \sum_{j=1}^d h_j^2(t) (O)_{j,i}^2  y_j +  g(t) \sum_{i=1}^d \sum_{j=1}^d h_j(t) (O)_{j,i}^2 \notag
    \\
    & = 2 g(t) \sum_{j=1}^d h_j^2(t)y_j + g(t) \sum_{j=1}^d h_j(t),
\end{align}
the last equality holds since $O$ is orthogonal. Finally, putting equations \eqref{main}, \eqref{main1}, \eqref{main2}, \eqref{main3} together we obtain
\begin{equation*}
    -g'(t) - g(t)\sum_{i=1}^d h'_i(t)y_i + 2 g(t) \sum_{i=1}^d h_i^2(t)y_i + g(t) \sum_{i=1}^d h_i(t) = \frac{1}{8} \sum_{i=1}^{d'} \eta_i^2 (y_{2i-1} + y_{2i}).
\end{equation*}
By rearranging in terms of $y_j$'s we establish the following system of ODEs
\begin{align*}
    & h'_i(t) = 2 h^2_i(t) - \frac{1}{8} \eta^2_{\lfloor i+1/2 \rfloor}, \quad i = 1,\dots,d;
    \\
    & g'(t) - g(t) \sum_{j=1}^d h_j(t) = 0.
\end{align*}
Since $\Lambda$ is invertible, $\eta_i \neq 0$ for all $i$, and the first equation becomes of Riccati type and the solution is given by 
\begin{equation*}
    h_i(t) = - \frac{\eta_{\lfloor i+1/2 \rfloor}}{4} \tanh (\frac{\eta_{\lfloor i+1/2 \rfloor}}{2}t), \quad \eta_{\lfloor i+1/2 \rfloor} \neq 0.
\end{equation*}
Finally, given $h_i(t)$, $g(t)$ has the form of
\begin{equation*}
    g(t) = g(0) \exp\left(\int_{0}^{t} \sum_{i = 1}^{d}h_i(s)ds\right) = \prod_{i=1}^{d} \underbrace{\exp\left(\int_0^{t}h_i(s)ds)\right)}_{C(t, \gamma_i)},
\end{equation*}
where
\begin{equation*}
    C(t, \gamma_i) = 
    \frac{1}{\sqrt{\cosh (\frac{\gamma_i}{2}t )}},
\end{equation*}
and $g(0) = 1$.
Therefore, the characteristic function of the sum process $L_t$ becomes
\begin{align*}
    \mathcal{L}(t,w; \Lambda) & = \bigg(\prod_{i=1}^{d} C(t, \gamma_i) \bigg) \exp\bigg(\sum_{i=1}^{d} h_i(t) (Ow)^2_{i}\bigg)
    \\
    & = \bigg(\prod_{i=1}^{d'} \frac{1}{\cosh (\frac{\gamma_i}{2}t )} \bigg) \exp\bigg(\sum_{i=1}^{d'} -\frac{\eta_i}{4} ((Ow)^2_{2i-1} + (Ow)^2_{2i}) \tanh(\frac{\eta_i}{2}t)\bigg).
\end{align*}
\end{proof}

\section{Characteristic function of Brownian motion coupled with the L\'evy area}\label{sec: Characteristic function of coupled Brownian motion and Levy area}
We recall the definition of the joint characteristic function in Theorem \ref{main_theorem}
\begin{equation}
\label{joint_characteristic_function}
    \Psi_{W}(t, \mu, \Lambda) = \mathbb{E} \bigg[ \exp \left(\mathfrak{i} \sum_{i=1}^d \mu_i W_t^{(i)}+ i L^{\Lambda}_t \right)\bigg \vert W_0 = 0 \bigg].
\end{equation}

We establish the connection between the characteristic function of L\'evy area conditioning on that $W_0 = w$ and the characteristic function of Brownian motion coupled with the L\'evy area conditioning on that $W_0 = 0$ by the translation invariance of Brownian motion in the following lemma.
\begin{lemma}[Connection between $\mathcal{L}$ and $\Psi$]\label{Lemma_link_L_and_Psi}
Let $\mathcal{L}$ and $\Psi$ defined the Equations \eqref{eq: conditional char levy} and \eqref{joint_characteristic_function} respectively, then, for any $(t, w)\in \mathbb{R}\times \mathbb{R}^d$ and $\Lambda\in skew(d)$,
\begin{eqnarray}
\mathcal{L}(t, w; \Lambda) = \Psi_{\Tilde{W}}(t, \frac{1}{2}w^T\Lambda, \Lambda),
\end{eqnarray}
where $\Tilde{W}$ is a Brownian motion with $\Tilde{W}_0 = 0$.
\end{lemma}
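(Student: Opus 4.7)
The plan is to exploit the spatial translation invariance of Brownian motion. If $\widetilde{W}$ denotes a Brownian motion with $\widetilde{W}_0 = w$, then $W_s := \widetilde{W}_s - w$ is a standard Brownian motion with $W_0 = 0$. I would simply substitute $\widetilde{W}_s = w + W_s$ into the SDE \eqref{matrix_form_of_L} that defines $L_t^{\Lambda}$ and exploit linearity of the It\^o integral against a deterministic integrand to split the generalised L\'evy area into a linear piece in $W_t$ and a centred L\'evy area piece.

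Concretely, the first step is to expand
\begin{equation*}
L_t^{\Lambda}[\widetilde{W}] \;=\; \tfrac{1}{2}\int_0^t \widetilde{W}_s^{\,T}\Lambda\, d\widetilde{W}_s \;=\; \tfrac{1}{2}\int_0^t (w+W_s)^T \Lambda\, dW_s \;=\; \tfrac{1}{2}\, w^T\Lambda\, W_t \;+\; L_t^{\Lambda}[W],
\end{equation*}
where the deterministic row vector $w^T\Lambda$ comes out of the stochastic integral and $L_t^{\Lambda}[W]$ denotes the generalised L\'evy area of the centred Brownian motion $W$. Plugging this decomposition into the definition \eqref{eq: conditional char levy} of $\mathcal{L}$ yields
\begin{equation*}
\mathcal{L}(t,w;\Lambda) \;=\; \mathbb{E}\bigl[\exp(iL_t^{\Lambda}[\widetilde{W}])\,\big|\, \widetilde{W}_0 = w\bigr] \;=\; \mathbb{E}\bigl[\exp\bigl(i\,\tfrac{1}{2}w^T\Lambda\, W_t + iL_t^{\Lambda}[W]\bigr)\,\big|\, W_0 = 0\bigr].
\end{equation*}

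The last step is to recognise the right-hand side as $\Psi_W(t, \tfrac{1}{2}w^T\Lambda, \Lambda)$ by matching with definition \eqref{joint_characteristic_function}: the entries of the row vector $\tfrac{1}{2}w^T\Lambda \in \mathbb{R}^d$ play the role of $\mu$, and $\sum_{i=1}^d \mu_i W_t^{(i)} = \tfrac{1}{2}w^T\Lambda\, W_t$. No serious obstacle is anticipated: the only subtlety is bookkeeping in passing from the ``BM started at $w$'' viewpoint to the ``BM started at $0$'' viewpoint, and ensuring that the It\^o integral of the constant row $w^T\Lambda$ against $dW_s$ is well-defined and evaluates to $w^T\Lambda\, W_t$, which is standard.
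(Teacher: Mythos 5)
Your proposal is correct and follows essentially the same route as the paper: both exploit translation invariance by writing $\widetilde{W}_s = w + W_s$, split the L\'evy area integral linearly into the deterministic term $\tfrac{1}{2}w^T\Lambda W_t$ plus the centred L\'evy area, and then match the result with the definition of $\Psi_W$. The only cosmetic difference is that the paper carries out the decomposition coordinate-wise on $\widetilde{L}^{(i,j)}_t$ while you work directly with the matrix form of the SDE.
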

\begin{proof}
    Let $1\leq i<j\leq d$, consider $L^{(I,j)}_t$ defined in Equation \eqref{eq:almost_levy}. Assume that $W_0 = w$ and $w\neq 0$, then $W_t$ can be further expressed as $W_t = w + \Tilde{W}_t$ where $\Tilde{W}$ is a standard Brownian motion, i.e. $\Tilde{W}_0 = 0$. Therefore,
    \begin{equation*}
    L^{(i,j)}_t = \frac{1}{2} \bigg(\int_0^t W_s^{(i)} dW_s^{(j)} - \int_0^t W_s^{(j)} dW_s^{(i)} \bigg) = \frac{1}{2}w^{(i)} \Tilde{W}_t^{(j)} - \frac{1}{2} w^{(j)} \Tilde{W}_t^{(i)} + \Tilde{L}_t^{(i,j)},
\end{equation*}
where 
\begin{equation*}
    \Tilde{L}_t^{(i,j)} = \frac{1}{2}\int_0^t \Tilde{W}_s^{(i)} d\Tilde{W}_s^{(j)} - \int_0^t \Tilde{W}_s^{(j)} d\Tilde{W}_s^{(i)},
\end{equation*}
is the L\'evy area of a standard Brownian motion. Hence, we can transform $\mathcal{L}(t,w;\Lambda)$ into the joint characteristic function of L\'evy area and standard Brownian motion
\begin{align*}
    \mathcal{L}(t,w; \Lambda) & = \mathbb{E}[\exp (\mathfrak{i} L^{\Lambda}_t) \bigg| W_0=w]
    \\
    & = \mathbb{E} \bigg( \exp \mathfrak{i}(\frac{1}{2}w^T \Lambda \Tilde{W}_t  + \Tilde{L}^{\Lambda}_t) \bigg \vert \Tilde{W}_0 = 0\bigg) 
    \\
    & =\Psi_{\Tilde{W}}(t, \frac{1}{2}w^T\Lambda, \Lambda).
\end{align*}
\end{proof}
In Section \ref{sec: levy derivation} we derived the analytical form for $\mathcal{L}$ when $\Lambda$ is non-degenerate, i.e. it does not possess any zero eigenvalue. Lemma \ref{Lemma_link_L_and_Psi} is therefore useful to transform the problem of computing $\Psi_W$ to $\mathcal{L}$, as $\Psi_W(t, \mu, \Lambda)$ is the same as $\mathcal{L}(t, 2(\Lambda^T)^{-1}\mu;\Lambda)$ provided that $\Lambda$ is invertible. However, additional care needs to be taken for the general case when $\Lambda$ possesses zero eigenvalues. In fact, when $d$ is odd, $\Lambda$ has at least one zero eigenvalue, therefore not invertible. We alleviate this degenerate issue by splitting the sum process $L_t^{\Lambda}$ into (1) the non-degenerate part composed with a lower dimension Brownian Brownian motion and the corresponding L\'evy area and (2) the independent Brownian motion component. This decomposition takes advantage of the rotational invariance of the sum process $L_t^{\Lambda}$ with respect to Brownian motions and structural decomposition of anti-symmetric matrix $\Lambda$. By doing so, we are able to derive the joint characteristic function $\Psi$ as the product of the characteristic function of those two parts, which are both already worked out. To our best knowledge, this degeneration issue has not been discussed in past literature.

\subsection{The case for non-degenerate $\Lambda$}


We begin with a simplified case where $\Lambda$ is invertible, note that this condition is satisfied only if $d$ is even. The joint characteristic function is given by the following theorem.
\begin{thm}
\label{theorem_full_case}
Let $W = (W_t^{(1)},\dots ,W_t^{(d)})_{t\in [0, T]}$ be a $d$-dimensional Brownian motion with $d$ even. Let $\mu\in \mathbb{R}^d$ and let $\Lambda\in \mathbb{R}^{d\times d}$ be a skew-symmetric and invertible matrix. Then the joint characteristic function $\Psi_{W}(t, \mu, \Lambda)$ defined in Equation \eqref{joint_characteristic_function} is given by
\begin{equation}\label{eqn_C}
    \Psi_{W}(t, \mu, \Lambda) = \bigg( \prod_{i=1}^{d'} \frac{1}{\cosh(\frac{\eta_i}{2}t)} \bigg) \exp \bigg(\sum_{i=1}^{d'} - \frac{1}{\eta_i} ((O\mu)_{2i-1}^2+(O\mu)_{2i}^2)\tanh(\frac{\eta_i}{2}t)\bigg), 
\end{equation}
where $d' = \frac{d}{2}$, $O,\ \eta $ are defined in Lemma \ref{lem_antisym_matrix_decomposition}.
\end{thm}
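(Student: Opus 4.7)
The plan is to reduce $\Psi_W$ to the conditional characteristic function $\mathcal{L}$ of the generalized L\'evy area (already solved in Lemma \ref{Lemma_formula_CharFunc_levy_area}) via Lemma \ref{Lemma_link_L_and_Psi}. Since $\Lambda$ is assumed invertible, the map $w \mapsto \tfrac{1}{2}w^T \Lambda$ is a bijection on $\mathbb{R}^d$, so for any prescribed $\mu \in \mathbb{R}^d$ I can solve $\tfrac{1}{2}w^T \Lambda = \mu^T$ explicitly. Using skew-symmetry $\Lambda^T = -\Lambda$ this yields $w = -2\Lambda^{-1}\mu$. Lemma \ref{Lemma_link_L_and_Psi} then gives
\begin{equation*}
\Psi_W(t,\mu,\Lambda) \;=\; \mathcal{L}\!\left(t,\, -2\Lambda^{-1}\mu;\; \Lambda\right).
\end{equation*}

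Next I would apply Lemma \ref{lem_antisym_matrix_decomposition} to write $\Lambda = O^T \Sigma O$, so that $\Lambda^{-1} = O^T \Sigma^{-1} O$ and consequently $Ow = -2\Sigma^{-1}(O\mu)$. Because $\Sigma$ is block-diagonal with $2\times 2$ blocks $\bigl(\begin{smallmatrix} 0 & -\eta_i \\ \eta_i & 0 \end{smallmatrix}\bigr)$ (and no zero blocks in the invertible case, which forces $d_0=0$ and hence $d$ even with $d'=d/2$), each block inverts to $\bigl(\begin{smallmatrix} 0 & 1/\eta_i \\ -1/\eta_i & 0 \end{smallmatrix}\bigr)$. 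A short block-wise computation then shows
\begin{equation*}
(Ow)_{2i-1}^{2} + (Ow)_{2i}^{2} \;=\; \frac{4}{\eta_i^{2}}\bigl((O\mu)_{2i-1}^{2} + (O\mu)_{2i}^{2}\bigr), \qquad i=1,\dots,d'.
\end{equation*}

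Finally I substitute this identity into formula \eqref{formula_L} of Lemma \ref{Lemma_formula_CharFunc_levy_area}: the prefactor $\prod_{i} 1/\cosh(\eta_i t/2)$ is unchanged, while the exponent transforms as
\begin{equation*}
-\frac{\eta_i}{4}\bigl((Ow)_{2i-1}^{2}+(Ow)_{2i}^{2}\bigr)\tanh\!\bigl(\tfrac{\eta_i}{2}t\bigr)
\;=\; -\frac{1}{\eta_i}\bigl((O\mu)_{2i-1}^{2}+(O\mu)_{2i}^{2}\bigr)\tanh\!\bigl(\tfrac{\eta_i}{2}t\bigr),
\end{equation*}
which matches \eqref{eqn_C} exactly. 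There is essentially no serious obstacle here: the theorem is a corollary of Lemmas \ref{Lemma_link_L_and_Psi} and \ref{Lemma_formula_CharFunc_levy_area}, and the only care required is the bookkeeping of the block inverses of $\Sigma$ together with the identity $\Lambda^T=-\Lambda$ when solving for $w$. The degenerate case (where $\Lambda$ has a non-trivial kernel) is postponed and will need the rotation/splitting argument previewed in the paragraph preceding the theorem, but for the invertible case treated here the reduction is clean.
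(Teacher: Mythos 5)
Your proposal is correct and follows essentially the same route as the paper's own proof: both reduce $\Psi_W$ to $\mathcal{L}$ via Lemma \ref{Lemma_link_L_and_Psi} with $w = 2(\Lambda^T)^{-1}\mu = -2\Lambda^{-1}\mu$, then use the block structure of $\Sigma^{-1}$ from Lemma \ref{lem_antisym_matrix_decomposition} to obtain $(Ow)_{2i-1}^2+(Ow)_{2i}^2 = \tfrac{4}{\eta_i^2}((O\mu)_{2i-1}^2+(O\mu)_{2i}^2)$ and substitute into \eqref{formula_L}. No gaps.
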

\begin{proof}
Let $d'=d/2$. By Lemma \ref{Lemma_formula_CharFunc_levy_area}, we derived the characteristic function of $L_t^{\Lambda}$, conditioned on an arbitrage starting point $W_0 = w$ of the Brownian motion. If $\Lambda$ is invertible then $w^T\Lambda$ spans the space of $\mathbb{R}^d$, hence $\mathcal{L}(t,w;\Lambda)$ can fully characterize $\Psi_{W}(t, \mu, \Lambda)$. Mathematically, for any $\mu \in \mathbb{R}^d$, we have
\begin{equation*}
    \Psi_{W}(t, \mu, \Lambda) = \mathcal{L}(t, 2(\Lambda^T)^{-1}\mu;\Lambda),
\end{equation*}
substituting $2(\Lambda^T)^{-1}\mu$ into equation \eqref{formula_L} and by noting that $(\Lambda^T)^{-1} = O^T (\Sigma^T)^{-1}O$ with
\begin{equation*}
    (\Sigma^T)^{-1} = 
\begin{pmatrix}
0 & -\frac{1}{\eta_{1}} & 0 &0& \dots & 0 &0 \\
\frac{1}{\eta_{1}} & 0 & 0 & 0 & \dots &  0&0 \\
0 & 0  & 0& -\frac{1}{\eta_{2}} & \dots & 0&0\\
0 &0& \frac{1}{\eta_{2}} &0 &\dots &0&0\\
\vdots & \vdots & \vdots & \vdots&\ddots&\vdots &\vdots\\
0 & 0 & \dots & 0&\dots &0& -\frac{1}{\eta_{\frac{d}{2}}}  \\
0 & 0 & \dots & 0&\dots & \frac{1}{\eta_{\frac{d}{2}}}& 0 &  
\end{pmatrix},
\end{equation*}
hence $(Ow)^{2}_{2i-1} + (Ow)^{2}_{2i} = \frac{4}{\eta^2_i} ((O\mu)^{2}_{2i-1} + (O\mu)^{2}_{2i})$ for $1\leq i \leq d'$. Finally
\begin{equation*}
    \Psi_{W}(t, \mu, \Lambda) = \bigg( \prod_{i=1}^{d'} \frac{1}{\cosh(\frac{\eta_i}{2}t)} \bigg) \exp \bigg(\sum_{i=1}^{d'} - \frac{1}{\eta_i} ((O\mu)_{2i-1}^2+(O\mu)_{2i}^2)\tanh(\frac{\eta_i}{2}t)\bigg). 
\end{equation*}
\end{proof}
In the following, we use $\Psi^{\text{Non-deg}}_{W}(t, \mu, \Lambda)$ to emphasize that it is the joint characteristic function $\Psi_{W}(t, \mu, \Lambda)$ with the non-degenerate $\Lambda$.
\subsection{A general case for anti-symmetric matrix $\Lambda$}
We recall the general statement for $\Psi_W(t,\mu, \Lambda)$, the joint characteristic function of standard Brownian motion and its L\'evy area that includes the case where the parameter matrix $\Lambda$ is degenerate.

\levychar

 
The proof consists of three steps. First, we consider the joint process and split the $d$-dimensional Brownian motion into two groups (after some rotation), namely, $\hat{W}$ and $\hat{W}^{\indep}$ such that $\hat{W}^{\indep}_t$ is independent of $\hat{W}$ and the L\'evy process. Secondly, we use Theorem \ref{theorem_full_case} to obtain the joint characteristic function of $\hat{W}$ and the corresponding L\'evy area. Finally, using the independence fact between $\hat{W}$ and $\hat{W}^{\indep}$ we derive the full joint characteristic function conditional on zero starting point.
\\
The following lemma shows the transformation of $\Psi_{W}$ by any rotation on $W$.
\begin{lemma}
For any orthogonal matrix $M$ of dimension $d \times d$,
\begin{eqnarray*}
\Psi_{W}(t,M\mu, M\Lambda M^T) =  \Psi_{M^TW}(t,\mu, \Lambda).
\end{eqnarray*}
\end{lemma}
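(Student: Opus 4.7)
The plan is to reduce the claimed identity to a pathwise computation inside the expectation, by identifying the rotation of the parameters $(\mu, \Lambda) \mapsto (M\mu, M\Lambda M^T)$ with the rotation $W \mapsto M^T W$ of the driving Brownian motion. Concretely, starting from the definition in Equation \eqref{joint_characteristic_function}, the aim is to show that the exponents on the two sides agree once we substitute $\tilde{W}_s := M^T W_s$; the result is then immediate since $(M^T W)_0 = 0$ whenever $W_0 = 0$.

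There are two terms in the exponent to handle. The linear one is immediate from $(M\mu)^T W_t = \mu^T (M^T W_t) = \mu^T \tilde{W}_t$. For the L\'evy area term, I would compute in coordinates, using the orthogonality $M^TM = I$ to pair one factor of $M$ with $W_s$ and the other with $dW_s$:
\begin{equation*}
L^{M\Lambda M^T}_t = \int_0^t \langle M\Lambda M^T W_s,\, dW_s\rangle = \int_0^t \langle \Lambda (M^T W_s),\, M^T dW_s\rangle = \int_0^t \langle \Lambda \tilde{W}_s,\, d\tilde{W}_s\rangle,
\end{equation*}
where in the final step one uses $d\tilde{W}_s = M^T dW_s$. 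Hence $L^{M\Lambda M^T}_t(W) = L^{\Lambda}_t(\tilde{W})$ pathwise. Because $M$ is deterministic, no It\^o correction arises, so this is a genuine sample-path identity rather than merely a distributional one.

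Combining the two pieces, the exponent inside the expectation on the left-hand side becomes $i\mu^T \tilde{W}_t + iL^{\Lambda}_t(\tilde{W})$, which is exactly the integrand defining $\Psi_{M^T W}(t, \mu, \Lambda)$. Since the conditioning $W_0 = 0$ is preserved under $\tilde{W} = M^T W$, taking expectations gives the claimed equality.

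There is no serious obstacle here: the only subtlety is the coordinate-level bookkeeping in the L\'evy area, which boils down to a single application of $M^T M = I$. The pathwise nature of the identity (stronger than what one would get by invoking rotational invariance of Brownian motion distributionally) will be the feature exploited in the subsequent step, where $W$ is decomposed along the block-diagonal basis produced by Lemma \ref{lem_antisym_matrix_decomposition} so that the degenerate directions of $\Lambda$ become independent Brownian components orthogonal to the non-degenerate ones.
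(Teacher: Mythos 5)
Your proof is correct and follows essentially the same route as the paper: both arguments substitute $\tilde{W}=M^{T}W$ pathwise, handle the linear term via $\langle M\mu, W_t\rangle=\langle \mu, M^{T}W_t\rangle$, and move one factor of $M$ across the inner product in the stochastic integral using $M^{T}M=I$. Your write-up is in fact slightly more careful than the paper's (making the pathwise identity for the L\'evy area explicit and noting the absence of an It\^o correction), but there is no substantive difference in approach.
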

\begin{proof}
    Note that $\sum_{i=1}^d (M\mu)_i W_t^{(i)} = \langle M\mu, W_t\rangle = \mu^TM^TW_t = \langle \mu, M^TW_t\rangle$. Hence
    \begin{align*}
        \Psi_{M^TW}(t, \mu, \Lambda) & = \mathbb{E} \bigg[ \exp \left(\mathfrak{i} \langle \mu, M^TW_t\rangle+ i (M^TW)^T \Lambda d(M^TW_t) \right)\bigg|W_0 = 0 \bigg] 
        \\
         & =  \mathbb{E} \bigg[ \exp \left(\mathfrak{i} \langle M\mu, W_t\rangle+ i W^T_t M \Lambda M^TdW_t \right)\bigg|W_0 = 0 \bigg]
        \\
         & = \Psi_{W}(t,M\mu, M\Lambda M^T).
    \end{align*}
\end{proof}
This lemma will help us on choosing $\hat{W}$ and $\hat{W}^{\indep}$. Now, we provide the proof for the general case, where $\Lambda$ does have zero eigenvalue:
\begin{proof}{(\textit{Theorem \ref{main_theorem}})}
    Let $W_t = (W_t^{(1)},\dots W_t^{(d)})$ be a $d$-dimensional Brownian motion and let $\Lambda$ be the matrix associated to the sum process $L_t^{\Lambda}$ of $W_t$. By Lemma \ref{lem_antisym_matrix_decomposition} we have that $\Lambda = O^T\Sigma O$. Based on $\Sigma$, we know the number of zero eigenvalues $d_0$. If $d_0 = 0$, $\Lambda$ is invertible and the problem is solved by Theorem \ref{theorem_full_case}. If $d_0>0$, then let $(O\mu)_{1:d-d_0} = \xi$ and $(O\mu)_{d-d_0:d} = \xi_0$ and define $\hat{W}_t = ((OW_t)^{(1)},\dots,(OW_t)^{(d-d_0)})$, $\hat{W}_t^{\indep} = ((OW_t)^{(d-d_0+1)},\dots,(OW_t)^{(d)})$. $\hat{W}_t^{\indep}$ is independent of $\hat{W}_t$ hence independent of $L_t^{\Lambda}$. We have that
\begin{eqnarray*}
\Psi_{W}(t, \mu, \Lambda) = \Psi_{OW}(t, O\mu, \Sigma) &=& \Psi_{OW}\left(t, (\xi, \xi_0)^{T}, \begin{pmatrix}
\Sigma_0,& \mathbf{0}_{d-d_0, d_0}\\
\mathbf{0}_{d_0, d-d_0}&\mathbf{0}_{d_0,d_0}
\end{pmatrix} \right)
\\
&=&\Psi_{\hat{W}}(t, \xi, \Sigma_0) \mathbb{E}[\exp(i \langle \xi_0, \hat{W}_t^{\indep}\rangle)\vert \hat{W}_0^{\indep} = 0]\\
&=&\Psi_{\hat{W}}(t, \xi, \Sigma_0) \exp\left(-\frac{1}{2}t\sum_{i = d-d_0}^d (\xi_0)^2_{i}\right).
\end{eqnarray*}
Hence we can reduce the problem into a degenerate space with dimension $d-d_0$ and $\Sigma_0 \in \mathbb{R}^{d-d_0 \times d-d_0}$ being invertible. By Theorem \ref{theorem_full_case}, we know that 
\begin{align*}
    \Psi_{\hat{W}}(t, \xi_1, \Sigma_0) & = \Psi^{\text{Non-deg}}_{\hat{W}}(t, \xi_1, \Sigma_0) \\
    & = \bigg( \prod_{i=1}^{\frac{d-d_0}{2}} \frac{1}{\cosh(\frac{\eta_i}{2}t)} \bigg) \exp \bigg(\sum_{i=1}^{\frac{d-d_0}{2}} - \frac{1}{\eta_i} (\xi_{2i}^2+\xi_{2i+1}^2)\tanh(\frac{\eta_i}{2}t)\bigg),
\end{align*}
where $\eta_i$'s depends on $\Sigma_0$ in the way stated in Lemma \ref{lem_antisym_matrix_decomposition}. Therefore,
\begin{align*}
    \Psi_{W}(t, \mu, \Lambda) = & \bigg( \prod_{i=1}^{d_1} \frac{1}{\cosh(\frac{\eta_i}{2}t)} \bigg) 
    \\
    & \exp \bigg( \big[ \sum_{i=1}^{d_1} - \frac{1}{\eta_i} ((O\mu)_{2i}^2+(O\mu)_{2i+1}^2)\tanh(\frac{\eta_i}{2}t) \big] - \frac{1}{2}t \sum_{i=1}^{d_0} (O\mu)^2_{2d_1+i}\bigg),
\end{align*}
with $d_0$, $d_1$ defined in Lemma \ref{lem_antisym_matrix_decomposition}.
\end{proof}
\section{Conclusion and future work}


In this paper, we provided a PDE approach to derive the characteristic function of the signature of a time-homogeneous It\^o diffusion up to a fixed time. As an important intermediate step, we generalized the signature process and established the Feynman-Kac type theorem for the characteristic function of the generalized signature process, following the martingale approach. Furthermore, as an application, we presented an alternative and novel proof of the well-established result concerning the characteristic function of $d$-dimensional Brownian motion coupled with its associated L\'evy area

As an extension of our general methodology, it is of great interest to extend our work and establish the PDE theorem for the path characteristic function (PCF) \cite{lou2023pcfgan, Ilya2013char} of an It\^o diffusion process. The PCF, built on top of the unitary matrix of arbitrary order, can determine the law on the unparameterized path (signature) space. It is noteworthy that the characteristic function of the signature process, discussed in our paper, can be regarded as a special case of the PCF when the matrix order is 1. However, the non-commutativity of matrix multiplication with order $>2$ imposes substantial difficulty on the extension work. 



Moreover, focusing on the coupled Brownian motion and L\'evy area, it would be interesting to investigate the conditional law of the joint process given its increment and the mid-point. Hence, a concrete open question would be how to derive its characteristic function via the PDE approach. It may shed some light on novel sampling algorithms as an alternative to computationally heavy Monte Carlo methods.  This study is instrumental for adaptive numerical schemes of SDE solvers as well as the sequential neural networks motivated by SDEs, such as Logsig-RNN\cite{Li2020} and Neural SDEs\cite{Kidger2021}.



\appendix
 \section{$L_2$-integrability of $\Sigma$}
\label{appendix: l2 integrability}
We first cite here a well-known result that established the bounds for moments of a time-homogeneous It\^o diffusion process under mild regularity conditions.

\begin{lemma}[Krylov \cite{krylov1980cdp}, Corollary 12]
\label{lemma:krylov}
    Let $X = (X_t)_{t \in [0, T]}$ be a $E$-valued time-homogeneous It\^o diffusion process with drift $\mu$, diffusion $\sigma$ and the starting point $X_0 = x$, assume that $\mu$ and $\sigma$ satisfy Condition \ref{conditionVectorField}.
    \\
    Then for every suffciently small $T>0$, there exists a constant $N$ depending on $p$, such that for all $p>0$ and $t\in [0,T]$,
    \begin{align*}
        \mathbb{E}^x [\sup_{t\in[0,T]} \vert X_t - x \vert^p] & \leq NT^{\frac{q}{2}}e^{NT}(1+\vert x\vert)^p;
        \\
        \mathbb{E}^x [\sup_{t\in[0,T]} \vert X_t \vert^p] & \leq N e^{NT}(1+\vert x\vert)^p.
    \end{align*}
\end{lemma}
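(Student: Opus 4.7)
The plan is to derive both bounds from the integrated form of the SDE by the standard toolkit of SDE moment estimates, namely H\"older for the drift term, the Burkholder--Davis--Gundy (BDG) inequality for the martingale term, the linear growth consequence of Condition \ref{conditionVectorField} (namely $|\mu(y)| + \sum_i |\sigma_i(y)| \leq K(1+|y|)$), and Gronwall's inequality. I would treat $p \geq 2$ first; the case $0 < p < 2$ then follows by Jensen applied to the $p = 2$ bound.

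The starting point is the identity
$$X_t - x = \int_0^t \mu(X_s)\,ds + \int_0^t \sigma(X_s)\,dW_s.$$
Taking supremum up to time $t$, raising to the $p$-th power and using $|a+b|^p \leq 2^{p-1}(|a|^p + |b|^p)$, I split into a drift contribution and a martingale contribution. H\"older gives
$$\Big|\int_0^s \mu(X_u)\,du\Big|^p \leq s^{p-1}\int_0^s |\mu(X_u)|^p\,du,$$
while BDG followed by H\"older yields
$$\mathbb{E}^x\Big[\sup_{s\leq t}\Big|\int_0^s \sigma(X_u)\,dW_u\Big|^p\Big] \leq C_p\, \mathbb{E}^x\Big[\Big(\int_0^t |\sigma(X_u)|^2\,du\Big)^{p/2}\Big] \leq C_p\, t^{p/2-1}\int_0^t \mathbb{E}^x|\sigma(X_u)|^p\,du.$$
Invoking the linear growth bound together with $(1+|y|)^p \leq 2^{p-1}((1+|x|)^p + |y-x|^p)$, I arrive at an integral inequality of the form
$$\mathbb{E}^x\Big[\sup_{s\leq t}|X_s - x|^p\Big] \leq N(1+|x|)^p (t^p + t^{p/2}) + N\int_0^t \mathbb{E}^x\Big[\sup_{u\leq s}|X_u - x|^p\Big]\,ds,$$
for a constant $N$ depending on $p$ and $K$. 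Gronwall's lemma then delivers the first inequality with prefactor $e^{NT}$ and the correct $T^{p/2}$ scaling (absorbing $T^p$ into $T^{p/2}e^{NT}$ for $T$ small). The second inequality is immediate from the first via $|X_t|^p \leq 2^{p-1}(|X_t - x|^p + |x|^p)$.

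The main technical obstacle is bookkeeping to guarantee that the constant $N$ depends only on $p$ and the Lipschitz constant $K$, and that the $T^{p/2}$ (rather than $T^{p}$) scaling genuinely comes out of the estimate for small $T$; this requires using the BDG bound as the dominant contribution and absorbing the drift's $T^p$ term into $T^{p/2}\cdot T^{p/2}$ with the $T^{p/2}$ pulled inside the exponential factor. As this lemma is explicitly cited from Krylov's monograph, one could alternatively appeal directly to the reference; the argument above is precisely the one standardised there.
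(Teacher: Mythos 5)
The paper offers no proof of this lemma: it is quoted verbatim as Corollary 12 of Krylov's monograph, so there is nothing internal to compare against. Your argument is the standard derivation of such moment bounds and is correct in outline: H\"older on the drift integral, BDG plus H\"older on the stochastic integral, the linear-growth consequence of Condition \ref{conditionVectorField}, and Gronwall, with the case $0<p<2$ recovered from $p=2$ by Jensen; the $T^{p/2}$ prefactor indeed emerges because for small $T$ the martingale term dominates and $T^p$ is absorbed into $T^{p/2}$. Two small points worth recording. First, to apply Gronwall you need to know a priori that $t\mapsto\mathbb{E}^x[\sup_{s\le t}|X_s-x|^p]$ is finite; the standard fix is to run the estimate for the stopped process $X_{t\wedge\tau_R}$ with $\tau_R=\inf\{t:|X_t|\ge R\}$ and let $R\to\infty$ by monotone convergence, and you should say so. Second, the exponent $q$ appearing in the statement's first display is evidently a typo for $p$, which your derivation confirms. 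Given that the paper explicitly outsources this result to \cite{krylov1980cdp}, citing the reference would also have sufficed, as you note.
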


\begin{lemma}
\label{lemma:regularity conditional expectation}
    $Let, E, B$ be two Banach spaces. Let $X = (X_t)_{t \in [0, T]}$ be a $E$-valued time-homogeneous It\^o diffusion process with drift $\mu$, diffusion $\sigma$ and the starting point $X_0 = x$, assume that $\mu$ and $\sigma$ satisfy Condition \ref{conditionVectorField}. Let $f: \mathbb{R}^+ \times E \to B$ be a function that satisfies polynomial condition in Definition \ref{definition_linear_growth}. Then, for every fixed $T>0$, $f(t, X_t)$ is $L_p$-integrable, namely,
    \begin{equation*}
        \sup_{t\in[0,T]} \mathbb{E} [\vert f(t, X_t) \vert^p] < \infty.
    \end{equation*}
\end{lemma}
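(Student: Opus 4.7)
The plan is to combine the polynomial growth assumption on $f$ with the standard moment estimate for the SDE solution (Lemma \ref{lemma:krylov}) in a direct way. First I would fix $T>0$ and use the polynomial growth condition: there exist a degree $m$ and a constant $C>0$ such that
\begin{equation*}
    \sup_{0\le t\le T}|f(t,x)|\le C(1+|x|^{m}), \qquad x\in E.
\end{equation*}
Raising to the $p$-th power and using the elementary inequality $(a+b)^{p}\le 2^{p-1}(a^{p}+b^{p})$ for $a,b\ge 0$, I obtain a pointwise bound of the form $|f(t,X_t)|^{p}\le C'(1+|X_t|^{mp})$ for a constant $C'$ depending only on $C$, $m$ and $p$.

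Next I would take expectations on both sides under $\mathbb{P}_x$, yielding
\begin{equation*}
    \mathbb{E}\bigl[|f(t,X_t)|^{p}\bigr]\le C'\bigl(1+\mathbb{E}\bigl[|X_t|^{mp}\bigr]\bigr),\qquad t\in[0,T].
\end{equation*}
Since the coefficients $\mu,\sigma$ satisfy Condition \ref{conditionVectorField}, Lemma \ref{lemma:krylov} applies with exponent $mp$ and gives a constant $N=N(mp)$ such that
\begin{equation*}
    \mathbb{E}\Bigl[\sup_{t\in[0,T]}|X_t|^{mp}\Bigr]\le Ne^{NT}(1+|x|)^{mp}<\infty.
\end{equation*}
In particular $\sup_{t\in[0,T]}\mathbb{E}[|X_t|^{mp}]$ is finite, and plugging this into the previous display yields the desired conclusion after taking the supremum over $t\in[0,T]$.

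Because every step is an inequality of the above kind, no genuine obstacle is expected; the only thing to be careful about is that the polynomial growth bound on $f$ is uniform in $t\in[0,T]$ (which is precisely the content of Definition \ref{definition_linear_growth}), so the same constant $C'$ works for every $t$ and the supremum can indeed be taken inside. The argument extends verbatim to the generalized-signature process $\mathbb{X}^n$, which is the application needed in the proof of Theorem \ref{theorem: general char pde}, once one notices (by Lemma \ref{lemma:enhanced diffusion}) that its drift and diffusion coefficients are polynomial in $\mathbbm{x}$ and hence, although not globally Lipschitz on all of $T^{n}(E)$, admit the same type of moment control through an inductive application of Lemma \ref{lemma:krylov} level by level.
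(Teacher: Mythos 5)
Your proposal is correct and follows essentially the same route as the paper: bound $|f(t,X_t)|^{p}$ pointwise via the polynomial growth condition, then control $\mathbb{E}[|X_t|^{mp}]$ uniformly on $[0,T]$ with the Krylov moment estimate (Lemma \ref{lemma:krylov}) and take the supremum. The only cosmetic differences are that you use the convexity inequality $(a+b)^{p}\le 2^{p-1}(a^{p}+b^{p})$ where the paper uses a cruder exponent $2r_f p$, and your closing remark about the generalized-signature process is extra commentary not contained in the paper's proof of this lemma.
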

\begin{proof}
    By the polynomial growth of $f$, we have 
    \begin{equation*}
        \sup_{t\in[0,T]}\vert f(t,x)\vert \leq K_f(1+\vert x\vert^{r_f}),
    \end{equation*}
    for some constant $K_f$ and $r_f$. Then, for any $p>1$
\begin{align*}
    (1+\vert x\vert^{r_f})^p \leq K' (1+\vert x\vert^{2r_f p}),
\end{align*}
    for some constant $K'$ that depends on $r_f$ and $p$. Therefore, we have
    \begin{align*}
        \sup_{t\in[0,T]} \mathbb{E}^x [\vert f(t, X_t) \vert^p] \leq K_f^p \sup_{t\in[0,T]} \mathbb{E}^x [(1+\vert X_t\vert^{r_f})^p]\leq K  (1+\mathbb{E}^x [\sup_{t\in[0,T]}\vert X_t\vert^{2r_fp}]),
    \end{align*}
where $K=K_f^p K'$. Now using Lemma \ref{lemma:krylov}, we know that the RHS is bounded by $N e^{NT}(1+\vert x\vert)^{2r_fp}$ for some constant $N$, hence we conclude that
\begin{equation*}
    \sup_{t\in[0,T]} \mathbb{E}^x [\vert f(t, X_t) \vert^p] < \infty.
\end{equation*}
Finally,
\begin{equation*}
    \sup_{t\in[0,T]}\mathbb{E}[\vert f(t, X_t) \vert^p] = \sup_{t\in[0,T]} \mathbb{E}[\mathbb{E}^x[\vert f(t, X_t) \vert^p]] \leq  \mathbb{E}[\sup_{t\in[0,T]}\mathbb{E}^x[\vert f(t, X_t) \vert^p]] < \infty.
\end{equation*}
\end{proof}

\section{Alternative approach to PDE theorem}
\subsection{PDE characterization via Taylor approach}
\label{appendix: alternative proof of main theorem}
In this subsection, we prove the PDE characterization of $\mathbb{L}_n$ by establishing the relationship between the characteristic function and the expected signature of a diffusion process. The connection is obtained via Taylor's expansion of $\mathbb{L}_n$ provided the series converges. We introduce now the regularity condition that $\mathbb{L}_n$ such that the Taylor series is well-defined.
\begin{definition}[Radius of convergence]
Fix $n\in \mathbb{N}$, let $(\mathbb{X}^n_t)_{t\in[0,T]}$ be the process defined in Definition \ref{def:sig-aug process}. Let $\mathbb{L}_n(t, \mathbbm{x};\lambda)$ be defined as in Definition \ref{def:char generalized-signature process}. We call a scalar $\mathcal{R} \in \mathbb{R}^{+}$ the radius of convergence (ROC) of $\mathbb{L}_{n}$ if  
\begin{eqnarray}
    \mathcal{R}:= \sup_{\lambda \in \mathbb{R}^{D_n} \& \mathcal{A}}|\lambda|, 
\end{eqnarray}
where $D_n$ is the dimension of $T^n(E)$ and $\mathcal{A}$ represents the set of $\lambda$ such that for all $(t, \mathbbm{x})$
\begin{eqnarray*}
\mathbb{L}_{n}(t, \mathbbm{x};\lambda) = \sum_{k \geq 0} i^k M_\lambda^{\otimes k}(\Phi_m(t,\mathbbm{x})).
\end{eqnarray*}
holds.
\end{definition}

\begin{theorem}\label{theorem enhanced pde}
    Fix $n\in \mathbb{N}$, let $(\mathbb{X}^n_t)_{t\in[0,T]}$ be the process defined in Definition \ref{def:sig-aug process} with drift $\mu_n$ and diffusion $\sigma_n$.
    Denote by $D_n$ the dimension of $\mathbb{X}^n$. Let $\Phi(t,\mathbbm{x})$ be the expected signature of $\mathbb{X}_{[0,t]}$ with $\mathbb{X}_0 = \mathbbm{x}$. Assume 
    \begin{itemize}
        \item $\mu_n,\sigma_n$ satisfy the Condition \ref{conditionVectorField}.
        \item $\Phi(t,\mathbbm{x})$ is differentiable with respect to $t$ and twice differentiable with respect to $\mathbbm{x}$.
    \end{itemize}
    Let $M: T^n(E)\to \mathbb{R}$ be a linear functional. Then, for ant $m>2$, the following recursive PDE holds
\begin{align}\label{eq:recursive pde linear process}
\left(-\frac{\partial }{\partial t}+ A\right)M^{\otimes m}\circ\Phi_m(t,\mathbbm{x})  = &
- (M\circ \mu_n)(\mathbbm{x}) (M^{\otimes m-1}\circ\Phi_{m-1})(t,\mathbbm{x})
\notag
\\
& -\sum_{j=1}^{D_n}(M\circ b_n^{(\cdot, j)})(\mathbbm{x})\frac{\partial }{\partial \mathbbm{x}^{(j)}} (M^{\otimes m-1}\circ\Phi_{m-1})(t,\mathbbm{x})
\notag
\\
& - (M^{\otimes 2}\circ b_n)(\mathbbm{x}) (M^{\otimes m-2}\circ\Phi_{m-2})(t,\mathbbm{x}),
\end{align}
where $\mathbbm{x}^{(j)}$ denotes the $j$-th coordinate of $\mathbbm{x}$,  $b_n = \sigma_n \sigma_n^T$, $b_n^{(\cdot, j)}$ denotes the $j$-th columns of $b_n$ and
\begin{equation*}
     A_nf(\mathbbm{x}_0) = \sum_{i=1}^{D_{n-1}} \mu_n^{(i)}(\mathbbm{x}_0)\frac{\partial f}{\partial \mathbbm{x}^{(i)}}\Bigg \vert_{\substack{\mathbbm{x}=\mathbbm{x}_0}} + \sum_{j_1=1}^{D_{n-1}} \sum_{j_2=1}^{D_{n-1}} \frac{1}{2}b_n^{(j_{1}, j_{2})}(\mathbbm{x}_0) \frac{\partial^2 f}{\partial \mathbbm{x}^{(j_1)}\partial \mathbbm{x}^{(j_2)}}\Bigg \vert_{\substack{\mathbbm{x}=\mathbbm{x}_0}}.
\end{equation*}
Furthermore,
\begin{equation}
\label{eq:recursive pde linear process 2}
    \left(-\frac{\partial }{\partial t}+ A\right)M\circ\Phi_1(t,\mathbbm{x}) = -M \circ \mu_n(\mathbbm{x}).
\end{equation}
Moreover, for every interger $m\geq 1$, $M^{\otimes m}\circ\Phi_m(t,\mathbbm{x})$ satisfies
\begin{equation*}
    M^{\otimes m}\circ\Phi_m(0,\mathbbm{x}) = 0,
\end{equation*}
and
\begin{equation*}
    \Phi_0(t,\mathbbm{x}) = 1,\quad \forall t\in \R, \mathbbm{x} \in E\oplus T^{n}(E).
\end{equation*}
\end{theorem}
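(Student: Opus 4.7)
\medskip

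\noindent\textbf{Proof plan for Theorem \ref{theorem enhanced pde}.} The plan is to reduce everything to Corollary \ref{corollary_pde} applied to the generalized-signature process itself, and then contract the resulting tensorial PDE against the linear functional $M$. By Lemma \ref{lemma:enhanced diffusion}, $\mathbb{X}^{n}$ is a time-homogeneous It\^o diffusion on $T^{n}(E)$ with drift $\mu_n$ and diffusion matrix $b_n = \sigma_n \sigma_n^{T}$, and by hypothesis $\mu_n, \sigma_n$ satisfy Condition \ref{conditionVectorField}. The regularity assumption on $\Phi$ gives exactly the smoothness required by Condition \ref{condtionESDiffusionFixedTime} for the diffusion $\mathbb{X}^{n}$. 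Hence Corollary \ref{corollary_pde}, applied with $X$ replaced by $\mathbb{X}^{n}$, directly yields the tensor-valued recursion
\begin{equation*}
\left(-\tfrac{\partial}{\partial t}+A_n\right)\Phi_m(t,\mathbbm{x}) = -\mu_n(\mathbbm{x})\otimes \Phi_{m-1}(t,\mathbbm{x}) - \sum_{j=1}^{D_n} b_n^{(\cdot,j)}(\mathbbm{x})\otimes \tfrac{\partial \Phi_{m-1}(t,\mathbbm{x})}{\partial \mathbbm{x}^{(j)}} - \tfrac{1}{2} b_n(\mathbbm{x}) \otimes \Phi_{m-2}(t,\mathbbm{x}),
\end{equation*}
for $m\geq 2$, together with the analogous one-line identity for $m=1$ and the initial condition $\Phi_m(0,\mathbbm{x})=0$.

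The second step is to compose both sides with $M^{\otimes m}$. Since $M:T^{n}(E)\to\mathbb{R}$ is linear, the scalar operator $M^{\otimes m}$ commutes with $\partial_t$ and with $A_n$ (because $A_n$ acts only in the $\mathbbm{x}$ variable), so the left-hand side becomes $(-\partial_t+A_n)(M^{\otimes m}\circ \Phi_m)(t,\mathbbm{x})$. For the right-hand side I use the elementary splitting property $M^{\otimes m}(a\otimes b) = M(a)\,M^{\otimes(m-1)}(b)$ for $a\in T^{n}(E)$ and $b\in T^{n}(E)^{\otimes(m-1)}$, together with $M^{\otimes m}(c\otimes d) = M^{\otimes 2}(c)\,M^{\otimes(m-2)}(d)$ for $c\in T^{n}(E)^{\otimes 2}$. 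Each of the three terms then contracts cleanly: the drift term produces $(M\circ\mu_n)(\mathbbm{x})\,(M^{\otimes(m-1)}\circ\Phi_{m-1})(t,\mathbbm{x})$; the first-derivative term produces $\sum_j (M\circ b_n^{(\cdot,j)})(\mathbbm{x})\,\partial_{\mathbbm{x}^{(j)}}(M^{\otimes(m-1)}\circ\Phi_{m-1})$, using that $\partial_{\mathbbm{x}^{(j)}}$ commutes with the scalar map $M^{\otimes(m-1)}$; and the diffusion term yields $(M^{\otimes 2}\circ b_n)(\mathbbm{x})\,(M^{\otimes(m-2)}\circ\Phi_{m-2})$ up to the factor $\tfrac{1}{2}$ inherited from Corollary \ref{corollary_pde}. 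Matching these three contributions gives Equation \eqref{eq:recursive pde linear process}. The case $m=1$ is obtained identically from the single-line equation for $\Phi_1$ in Corollary \ref{corollary_pde}, giving Equation \eqref{eq:recursive pde linear process 2}. The initial condition $M^{\otimes m}\circ \Phi_m(0,\mathbbm{x})=0$ is immediate from $\Phi_m(0,\mathbbm{x})=0$ by linearity.

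I do not expect a real obstacle here: the work has already been done in two places (the verification that $\mathbb{X}^n$ is an It\^o diffusion, which is Lemma \ref{lemma:enhanced diffusion}, and the tensor PDE for expected signatures, which is Corollary \ref{corollary_pde}). The only point that needs a moment of care is the bookkeeping when pulling $M^{\otimes m}$ through a tensor product whose left factor lives in $T^{n}(E)$ or $T^{n}(E)^{\otimes 2}$; the splitting is valid because $M^{\otimes m}$ is, by definition, the tensor product of scalar-valued linear maps, so $M^{\otimes m}(v_1\otimes\cdots\otimes v_m) = \prod_i M(v_i)$ and in particular the factor corresponding to the leftmost slot (or two slots, in the diffusion term) separates as a coefficient. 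With that, both sides match termwise and the proof is complete.
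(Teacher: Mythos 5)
Your proposal is correct and follows essentially the same route as the paper: apply Corollary \ref{corollary_pde} to the generalized-signature process $\mathbb{X}^n$ (justified as an It\^o diffusion via Lemma \ref{lemma:enhanced diffusion}) and then contract the resulting tensor-valued recursion with $M^{\otimes m}$ using the splitting property $M^{\otimes m}(a\otimes b)=M^{\otimes k}(a)\,M^{\otimes (m-k)}(b)$. Your remark about the factor $\tfrac{1}{2}$ on the $b_n$ term is well taken --- a careful contraction of Corollary \ref{corollary_pde} does carry that factor, which the displayed equation \eqref{eq:recursive pde linear process} appears to omit.
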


\begin{proof}
    By Lemma Corollary \ref{corollary_pde} we have that $\Phi(t,\mathbbm{x})$ satisfies the recursive PDE
\begin{align}
\left(-\frac{\partial }{\partial t}+ A\right)\Phi_m(t,\mathbbm{x})  = &
- \left(\sum_{j = 1}^{D_n} \mu_n^{(j)}(\mathbbm{x})e_{j}\right) \otimes \Phi_{m-1}(t,\mathbbm{x}) \notag
\\
& -\sum_{j=1}^{D_n}\left(\sum_{j_{1}=1}^{D_n}b_n^{(j_{1}, j)}(\mathbbm{x})e_{j_{1}}\right) \otimes \frac{\partial \Phi_{m-1}(t,\mathbbm{x})}{\partial \mathbbm{x}^{(j)}} \notag
\\
& - \left(\frac{1}{2}\sum_{j_{1}=1}^{D_n}\sum_{j_{2}=1}^{D_n}b_n^{(j_{1}, j_{2})}(\mathbbm{x})e_{j_{1}}\otimes e_{j_{2}}\right) \otimes \Phi_{m-2}(t,\mathbbm{x}), \label{eq:intermediate_pde}
\end{align}
for any $m>2$. Additionally, $\forall t\in \R, \mathbbm{x} \in T^{n}(E)$ we have
\begin{align}
    \left(-\frac{\partial }{\partial t}+ A\right)\Phi_1(t,\mathbbm{x}) & = - \left(\sum_{j = 1}^{D_n} \mu_n^{(j)}(\mathbbm{x})e_{j}\right) \label{eq:recursive1};
    \\
    \Phi_m(0,\mathbbm{x}) & = 0,\quad m\geq 1;\label{eq:recursive2}
    \\
    \Phi_0(t,\mathbbm{x}) & = 1.\label{eq:recursive3}
\end{align}
Note that
\begin{equation*}
    M^{\otimes m}(x \otimes y ) = M^{\otimes n}(x)  M^{\otimes m-n}(y),
\end{equation*}
for any $x\in T^{n}(E)$, $y\in T^{m-n}(E)$ for all $n\in \{0,\dots, m\}$. We apply corresponding $M^{\otimes m}$ on both sides of Equations \eqref{eq:intermediate_pde}, \eqref{eq:recursive1}, \eqref{eq:recursive2}, and use the linearity of $M$ we obtain the required result.
\end{proof}

Note that the PDE stated in Theorem \ref{theorem enhanced pde} can be further simplified by noting that $\Phi(t,\mathbbm{x})$, the expected signature of $\mathbb{X}^n_{[0, t]}$ does not depend on the last degree of the starting point $\mathbb{X}^n_0$.  
\begin{lemma}
\label{lemma_invariance_expected_signature}
    Let $(\mathbb{X}^n_t)_{t\in[0,T]}$ be the process defined in Definition \ref{def:sig-aug process}. For any $\mathbbm{x}=(1,\mathbbm{x}_1,\dots, \mathbbm{x}_n)\in T^n(E)$, the expected signature of $\mathbb{X}^n_{[0,t]}$ conditional on $\mathbb{X}^n_0 = (1,\mathbbm{x}_1,\dots, \mathbbm{x}_n)$, denoted by $\Phi(t,\mathbbm{x})$, is independent of $\mathbbm{x}_n$, namely,
\begin{align*}
    \Phi(t, (1, \mathbbm{x}_1, \dots, \mathbbm{x}_n)) & = \mathbb{E}[S(\mathbb{X}^n_{[0, t]}) \vert \mathbb{X}^n_0 = (1, \mathbbm{x}_1, \dots, \mathbbm{x}_n)]
    \\
    & = \mathbb{E}[S(\mathbb{X}^n_{[0, t]}) \vert \mathbb{X}^{n-1}_0 = (1, \mathbbm{x}_1,\dots,\mathbbm{x}_{n-1})]. 
\end{align*}
where $\text{Sig}$ denotes the signature transform.
\end{lemma}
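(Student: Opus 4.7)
The plan is to exploit the triangular structure of the SDE for $\mathbb{X}^n$ established in Lemma \ref{lemma:enhanced diffusion}, together with the translation invariance of the signature transform.

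First I would observe, from the explicit formulas
\begin{equation*}
\mu^{(i)}(\mathbbm{x}) = \rho^{i-1}(\mathbbm{x}) \otimes \mu(\rho^1(\mathbbm{x})) + \tfrac{1}{2}\rho^{i-2}(\mathbbm{x}) \otimes (\sigma \otimes \sigma^T)(\rho^1(\mathbbm{x})),\quad \sigma^{(i)}(\mathbbm{x})w = \rho^{i-1}(\mathbbm{x}) \otimes (\sigma(\rho^1(\mathbbm{x}))w),
\end{equation*}
that both $\mu^{(i)}$ and $\sigma^{(i)}$ depend only on $\pi^{i-1}(\mathbbm{x})$. In particular, neither $\mu^{(n)}$ nor $\sigma^{(n)}$ involves $\rho^n(\mathbbm{x})=\mathbbm{x}_n$. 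Consequently the sub-system for $\mathbb{X}^{n-1}=\pi^{n-1}(\mathbb{X}^n)$ is self-contained: it satisfies its own SDE with coefficients $(\mu_{n-1},\sigma_{n-1})$ which still obey Condition \ref{conditionVectorField}, so strong existence and uniqueness yield that the law of $(\mathbb{X}^{n-1}_s)_{s\in[0,t]}$ is determined entirely by $\pi^{n-1}(\mathbbm{x}) = (1,\mathbbm{x}_1,\dots,\mathbbm{x}_{n-1})$.

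Next I would write the top-degree coordinate explicitly:
\begin{equation*}
\rho^n(\mathbb{X}^n_s) - \rho^n(\mathbb{X}^n_0) = \int_0^s \mu^{(n)}(\mathbb{X}^{n-1}_u)\,du + \int_0^s \sigma^{(n)}(\mathbb{X}^{n-1}_u)\,dW_u.
\end{equation*}
Since both integrands depend only on the $\mathbb{X}^{n-1}$-trajectory, the increment $\rho^n(\mathbb{X}^n_s)-\rho^n(\mathbb{X}^n_0)$ is a functional of $\pi^{n-1}(\mathbbm{x})$ and $W$ alone. Combining the two observations, the full increment process $(\mathbb{X}^n_s - \mathbb{X}^n_0)_{s\in[0,t]}$ has a law that depends only on $(1,\mathbbm{x}_1,\dots,\mathbbm{x}_{n-1})$, and is in particular independent of $\mathbbm{x}_n$.

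Finally I would invoke translation invariance of the signature: since $S(Y_{[0,t]})$ is defined via iterated Stratonovich integrals of $dY$, and these are insensitive to a constant shift of $Y$, we have $S(\mathbb{X}^n_{[0,t]}) = S\bigl((\mathbb{X}^n - \mathbb{X}^n_0)_{[0,t]}\bigr)$. Taking expectations gives
\begin{equation*}
\Phi(t,(1,\mathbbm{x}_1,\dots,\mathbbm{x}_n)) = \mathbb{E}\bigl[S\bigl((\mathbb{X}^n-\mathbb{X}^n_0)_{[0,t]}\bigr)\bigm|\mathbb{X}^n_0=\mathbbm{x}\bigr],
\end{equation*}
and the right-hand side depends only on $\pi^{n-1}(\mathbbm{x})$, as claimed. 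The only mildly delicate step is the translation invariance of the signature in the Stratonovich (random) setting, but this is standard and falls within the rough path framework already used throughout the paper; everything else is bookkeeping based on the block-triangular shape of $(\mu_n,\sigma_n)$.
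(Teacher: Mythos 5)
Your argument is correct and is essentially the paper's own proof, written out in more detail: both rest on the block-triangular structure of $(\mu_n,\sigma_n)$ from Lemma \ref{lemma:enhanced diffusion} (so the dynamics never see $\mathbbm{x}_n$) combined with the fact that the signature depends only on the increments $d\mathbb{X}^n_t$ and is therefore insensitive to a constant shift of the top coordinate. The paper phrases this as adding a constant $\Tilde{\mathbbm{x}}_n$ to the $n$-th level and observing $d\Tilde{\mathbb{X}}^n_t = d\mathbb{X}^n_t$, which is the same translation-invariance step you use.
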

\begin{proof}
    Let $\Tilde{\mathbbm{x}}_n \in (\mathbb{R}^d)^{\otimes n}$, $\Tilde{\mathbb{X}}^n_t = \mathbb{X}^n_t + (0,\dots,\Tilde{\mathbbm{x}}_n)$. Since $d\Tilde{\mathbb{X}}^n_t = d\mathbb{X}^n_t$ we have that $S(\Tilde{\mathbb{X}}^n_{[0,t]}) = S(\mathbb{X}^n_{[0,t]})$. Also, from Lemma \ref{lemma:enhanced diffusion} we know that $\mathbb{X}^n_t$ only depends on $\mathbbm{x}_1,\dots \mathbbm{x}_{n-1}$, therefore the expected signature will also agree, i.e. $\mathbb{E}[S(\Tilde{\mathbb{X}}^n_{[0,t]})] = \mathbb{E}[S(\mathbb{X}^n_{[0,t]})]$.
\end{proof}
By combining Theorem \ref{theorem enhanced pde} and Lemma \ref{lemma_invariance_expected_signature} and establishing the linkage between the characteristic function and the expected signature of an It\^o diffusion, we present here the main result. The PDE representation of the conditional characteristic function of the truncated signature of an It\^o diffusion via the Taylor series approach.

\begin{theorem}
\label{theorem: general char pde with radius of convergence}
Fix $n\in \mathbb{N}$, let $(\mathbb{X}^n_t)_{t\in[0,T]}$ be the process defined in Definition \ref{def:sig-aug process} with drift $\mu_n$ and diffusion $\sigma_n$. Denote by $\Phi$ the expected signature of $\mathbb{X}^n_{[0,t]}$ conditioned on the starting point $\mathbb{X}^n_0 = \mathbbm{x}$ and assume that $\mathbb{X}^n, \Phi(t,\mathbbm{x})$ satisfy conditions in Theorem $\ref{theorem enhanced pde}$. Denote by $D_n$ the dimension of $\mathbb{X}^n$. 
\\
Let $\lambda\in T^n(E)$ and consider the characteristic function $\mathbb{L}_n(t,\mathbbm{x}; \lambda)$ defined in Definition \ref{def:char generalized-signature process}. Finally, assume that, 
$|\lambda|\leq \mathcal{R}$ where $\mathcal{R}$ is the ROC of $\mathbb{L}_n$. Then, $\mathbb{L}_n$ is the solution to the following PDE: 
\begin{eqnarray}
\label{eq:general pde radius of convergence}
\left(-\frac{\partial }{\partial t}+ A_n\right)\mathbb{L}_n(t,\mathbbm{x}; \lambda) +i\left((M_{\lambda}\circ \mu_n)(\mathbbm{x})+\sum_{j=1}^{D_{n-1}}(M_{\lambda}\circ b_n^{(\cdot, j)})(\mathbbm{x})\frac{\partial }{\partial \mathbbm{x}^{(j)}}\right)\mathbb{L}_n(t,\mathbbm{x}; \lambda)\notag
\\
-\frac{1}{2}(M_{\lambda}^{\otimes 2}\circ b_n)(\mathbbm{x})\mathbb{L}_n(t,\mathbbm{x}; \lambda)=0, 
\end{eqnarray}
with initial condition $\mathbb{L}_n(0, \mathbbm{x}; \lambda) = 1$ for all $(t, \mathbbm{x})\in \R\times T^{n-1}(E)$, $b_n = \sigma_n \sigma_n^T$, $b_n^{(\cdot, j)}$ denotes the $j$-th column of $b_n$ and
\begin{equation*}
     A_nf(\mathbbm{x}_0) = \sum_{i=1}^{D_{n-1}} \mu_n^{(i)}(\mathbbm{x}_0)\frac{\partial f}{\partial \mathbbm{x}^{(i)}}\Bigg \vert_{\substack{\mathbbm{x}=\mathbbm{x}_0}} + \sum_{j_1=1}^{D_{n-1}} \sum_{j_2=1}^{D_{n-1}} \frac{1}{2}b_n^{(j_{1}, j_{2})}(\mathbbm{x}_0) \frac{\partial^2 f}{\partial \mathbbm{x}^{(j_1)}\partial \mathbbm{x}^{(j_2)}}\Bigg \vert_{\substack{\mathbbm{x}=\mathbbm{x}_0}}.
\end{equation*}
\end{theorem}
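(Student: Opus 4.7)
The plan is to start from the convergent Taylor series representation of $\mathbb{L}_n$ that the ROC hypothesis provides, differentiate it term by term, and use the recursive PDE for $M_\lambda^{\otimes k}\circ\Phi_k$ from Theorem~\ref{theorem enhanced pde} to reassemble the resulting tails back into $\mathbb{L}_n$ and its first two derivatives. The initial condition $\mathbb{L}_n(0,\mathbbm{x};\lambda)=1$ is immediate from $\Phi_0\equiv 1$ and $\Phi_k(0,\cdot)=0$ for $k\ge 1$, so only the PDE requires substantive work.

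Concretely, under $|\lambda|\le\mathcal{R}$ we have
\[
\mathbb{L}_n(t,\mathbbm{x};\lambda)=\sum_{k\ge 0} i^k\, M_\lambda^{\otimes k}\!\bigl(\Phi_k(t,\mathbbm{x})\bigr).
\]
I would apply $(-\partial_t+A_n)$ termwise. The $k=0$ contribution vanishes since $\Phi_0\equiv 1$; the $k=1$ contribution equals $-i\,M_\lambda\circ\mu_n$ by \eqref{eq:recursive pde linear process 2}; for $k\ge 2$ the recursive identity \eqref{eq:recursive pde linear process} (with the factor $\tfrac12$ on the second-order piece reinstated, as in Corollary~\ref{corollary_pde}) expresses the contribution as a sum of three terms involving $M_\lambda^{\otimes k-1}\circ\Phi_{k-1}$, its spatial derivatives, and $M_\lambda^{\otimes k-2}\circ\Phi_{k-2}$. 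Reindexing $m=k-1$ in the two drift-type tails and $m=k-2$ in the diffusion-type tail, and using the shifts $i^{m+1}=i\cdot i^m$ and $i^{m+2}=-i^m$, these tails resum into $-i(M_\lambda\circ\mu_n)(\mathbb{L}_n-1)$, $-i\sum_j (M_\lambda\circ b_n^{(\cdot,j)})\partial_j\mathbb{L}_n$ (the $\Phi_0=1$ boundary contributes nothing because $\partial_j 1=0$), and $\tfrac12(M_\lambda^{\otimes 2}\circ b_n)\mathbb{L}_n$. The spare $+i\,M_\lambda\circ\mu_n$ produced by the ``$-1$'' in the first tail cancels the $k=1$ contribution exactly, and rearranging the surviving terms gives \eqref{eq:general pde radius of convergence}.

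The principal obstacle is justifying the termwise application of $\partial_t$, $\partial_{\mathbbm{x}^{(j)}}$, and $\partial^2_{\mathbbm{x}^{(j_1)}\mathbbm{x}^{(j_2)}}$ to the series for $\mathbb{L}_n$. The bare ROC hypothesis only controls $\sum_k \lvert\lambda\rvert^k\lVert\Phi_k\rVert$, whereas the manipulation above implicitly requires uniform convergence on compact subsets of $\mathbb{R}^+\times T^{n-1}(E)$ of the corresponding differentiated series. One route is to differentiate Corollary~\ref{corollary_pde} in $\mathbbm{x}$ to obtain recursive PDEs for $\partial_j\Phi_k$ and $\partial^2_{j_1 j_2}\Phi_k$ driven by $\Phi_{k-1}$ and $\Phi_{k-2}$ together with their derivatives, then combine these with the standing regularity hypothesis on $\Phi$ in Theorem~\ref{theorem enhanced pde} to produce local domination of the tail sums; a Weierstrass-style argument then legitimises the interchange. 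Once this interchange is secured, the remainder of the proof is the purely algebraic resummation described above.
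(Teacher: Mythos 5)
Your proposal follows essentially the same route as the paper's own proof: expand $\mathbb{L}_n$ as the Taylor series $\sum_{k\ge 0} i^k M_\lambda^{\otimes k}(\Phi_k)$ under the ROC hypothesis, apply the recursive PDEs of Theorem~\ref{theorem enhanced pde} term by term, and resum; your index bookkeeping (including reinstating the factor $\tfrac12$ on the second-order term, which is indeed dropped in the statement of Equation~\eqref{eq:recursive pde linear process} but present in Corollary~\ref{corollary_pde}) is correct. You are in fact more careful than the paper, which performs the termwise differentiation and summation without comment, whereas you correctly identify the interchange of $(-\partial_t+A_n)$ with the infinite sum as the point requiring additional domination beyond the bare ROC assumption.
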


\begin{proof}
    Fix $\lambda\in T^n(E)$, we consider 
    \begin{align*}
    \mathbb{L}_n : \mathbb{R}\times \mathbb{R}^{D_n} \to \mathbb{C},\quad (t,\mathbbm{x}; \lambda) \mapsto \mathbb{E}[\exp(i M_{\lambda}(\mathbb{X}^n_t - \mathbb{X}^n_0)\rangle) | \mathbb{X}^n_0 = \mathbbm{x}]. 
\end{align*}
First, we note that $\mathbb{L}_n$ does not depend on $\mathbbm{x}_n$ i.e. the last degree of the $n$-th truncated tensor algebra. Indeed, using Taylor expansion, we can express $\mathbb{L}_n(t,\mathbbm{x}; \lambda)$ as a power series in $\Phi_m(t, \mathbbm{x})$
\begin{equation*}
    \mathbb{L}_n(t,\mathbbm{x}; \lambda) = \sum_{m>0} i^m M_{\lambda}^{\otimes m}(\Phi_m(t, \mathbbm{x})),
\end{equation*}
Provided the series in the RHS converges. From now on, without explicitly stating, we assume $\mathbbm{x}\in T^{n-1}(E)$.
By Theorem \ref{theorem enhanced pde}, for each $m>2$, $M_{\lambda}^{\otimes m}(\Phi_m(t, \mathbbm{x}))$ satisfies the recursive PDE stated in Equation \eqref{eq:recursive pde linear process} and $M_{\lambda}(\Phi_1(t, \mathbbm{x}))$ satisfies Equation \eqref{eq:recursive pde linear process 2}. Note that $M_{\lambda}^{\otimes 0}(\Phi_0(t, \mathbbm{x})) = 1$, 
summing up the recursive PDE equations for $M_{\lambda}^{\otimes m}(\Phi_m(t, \mathbbm{x}))$  over $m>2$ we get
\begin{eqnarray*}
\left(-\frac{\partial }{\partial t}+ A\right)\mathbb{L}_n(t,\mathbbm{x}; \lambda) +i\left((M_{\lambda}\circ \mu_n)(\mathbbm{x})+\sum_{j=1}^{D_{n-1}}(M_{\lambda}\circ b_n^{(\cdot, j)})(\mathbbm{x})\frac{\partial }{\partial \mathbbm{x}^{(j)}}\right)\mathbb{L}_n(t,\mathbbm{x}; \lambda)\notag
\\-\frac{1}{2}(M_{\lambda}^{\otimes 2}\circ b_n)(\mathbbm{x})\mathbb{L}_n(t,\mathbbm{x}; \lambda)=0,
\end{eqnarray*}
for all $(t, \mathbbm{x})\in \R\times T^{n-1}(E)$ with $\mathbb{L}_n(0, \mathbbm{x};\lambda) =1$. 
\end{proof}

\begin{remark}
Solving the PDE stated in Equation \eqref{eq:general pde radius of convergence} analytically is often unfeasible and requires numerical schemes. However, using the Taylor series, we can extract information about $\mathbb{X}^n_t$ from the expected signature by taking the derivatives from the expansion.
\end{remark}

\begin{acks}
TL and HN are supported by the EPSRC [grant number
\\
EP/S026347/1], in
part by The Alan Turing Institute under the EPSRC grant 
\\
EP/N510129/1. TL is also funded in part by the Data Centric Engineering
Programme (under the Lloyd’s Register Foundation grant G0095), the Defence and Security Programme
(funded by the UK Government) and the Office for National Statistics \& The Alan Turing Institute
(strategic partnership) and in part by the Hong Kong Innovation and Technology Commission (InnoHK
Project CIMDA). The authors are grateful for the useful discussion with Dr. Sam Morley, Dr. Siran Li, and William Turner. 
\end{acks}



\begin{thebibliography}{99}

\bibitem{boedihardjo2021expected} Boedihardjo H., Diehl J., Mezzarobba M. and Ni H.: The expected signature of Brownian motion stopped on the boundary of a circle has finite radius of convergence. \emph{Bulletin of the London Mathematical Society} \textbf{53} (2021), 285--299

\bibitem{boedihardjo2014signature} Boedihardjo H. and Geng X.: The uniqueness of signature problem in the non-Markov setting. \emph{Stochastic Processes and their Applications} \textbf{125} (2015), 4674--4701

\bibitem{boedihardjo2015signature} Boedihardjo H., Geng X., Lyons T. and Yang D.: The Signature of a Rough Path: Uniqueness. \emph{Advances in Mathematics} \textbf{293}, (2016), 720--737

\bibitem{Terry2007roughpath} Caruana M., Lyons T. and L\'evy T.: Differential Equations Driven by Rough Paths. \emph{Springer}, (2007)

\bibitem{Ilya2013char} Chevyrev I. and Lyons T.: Characteristic functions on the space of signatures of geometric rough paths. \emph{The Annals of Probability} \textbf{44}, (2016), 4049--4082

\bibitem{chevyrev2022signature} Chevyrev I. and Oberhauser H.: Signature moments to characterize laws of stochastic processes. \ARXIV{1810.10971}, (2022)

\bibitem{LevyAreaAffineSDEs} Cuchiero C.: From L\'evy's stochastic area formula to universality of affine and polynomial processes. \emph{University of Vienna}, (2023)

\bibitem{cuchiero2023signature} Cuchiero C., Svaluto-Ferro S. and Teichmann J.: Signature SDEs from an affine and polynomial perspective. \ARXIV{2302.01362}, (2023)

\bibitem{Friedman1964pde} Friedman A.: Partial differential equations of parabolic type. \emph{Prentice-Hall, Inc., Englewood Cliffs}, (1964)

\bibitem{fritz2020course} Friz P. K. and Hairer M.: A Course on Rough Paths. \emph{Springer}, (2020)

\bibitem{Greub1967} Greub W. H.: Linear Algebra.  (1967)

\bibitem{Terry2010roughpath} Hambly B. and Lyons T.: Uniqueness for the signature of a path of bounded variation and the reduced path group. \emph{Annals of Mathematics} \textbf{171}, (2010), 109--167

\bibitem{helmes1983levy} Helmes K. and Schwane A.: Uniqueness for the signature of a path of bounded variation and the reduced path group. \emph{Journal of functional analysis} \textbf{54}, (1983), 177--192

\bibitem{Ito1946} It\^o K.: On a stochastic integral equation. \emph{Proceedings of the Japan Academy} \textbf{22}, (1946), 32--35

\bibitem{Karatzas1991stocal} Karatzas I. and Shreve S.: Brownian Motion and Stochastic Calculus. \emph{Graduate Texts in Mathematics}, (1991)

\bibitem{Kidger2021} Kidger P., Foster J., Li X. and Lyons T.: Neural SDEs as Infinite-Dimensional GANs. \emph{Proceedings of the 38th International Conference on Machine Learning} \textbf{139}, (2021), 5453--5463

\bibitem{krylov1980cdp} Krylov N. V.: Controlled Diffusion Process. \emph{Springer-Verlag}, (1980)

\bibitem{levin2008combinatorial} Levin D. and Wildon M.: A combinatorial method for calculating the moments of L\'evy area. \emph{Transactions of the American Mathematical Society} \textbf{360}, (2008), 6695--6709

\bibitem{levin2016learning} Levin D., Lyons T. and Ni H.: Learning from the past, predicting the statistics for the future, learning an evolving system. \ARXIV{1309.0260}, (2016)

\bibitem{levy1940area} L\'evy P.: Le mouvement Brownien plan. \emph{American Journal of Mathematics} \textbf{62}, (1940), 487--550

\bibitem{Li2020} Li X., Wong T-K. L., Chen R. T. Q. and Duvenaud D. K.: Scalable Gradients and Variational Inference for Stochastic Differential Equations. \emph{Proceedings of The 2nd Symposium on Advances in Approximate Bayesian Inference} \textbf{118}, (2020), 1--28

\bibitem{lou2023pcfgan} Lou H., Li S. and Ni H.: PCF-GAN: generating sequential data via the characteristic function of measures on the path space. \ARXIV{2305.12511}, (2013)

\bibitem{Lyons2015} Lyons T. and Ni H.: Expected signature of Brownian motion up to the first exit time from a bounded domain. \emph{The Annals of Probability} \textbf{43}, (2015)

\bibitem{Terry2004cubature} Lyons T. and Victoir N.: Cubature on Wiener Space. \emph{Stochastic Analysis with Applications to Mathematical Finance} \textbf{460}, (2004), 169--198

\bibitem{ni2012expected_signature} Ni H.: The Expected Signature of a Stochastic Process. \emph{Oxford doctoral thesis}, (2012)

\bibitem{ni2021sigwasserstein} Ni H., Szpruch L., Sabate-Vidales M., Xiao B., Wiese M. and Liao S.: Sig-Wasserstein GANs for Time Series Generation. \ARXIV{2111.01207}, (2021)

\bibitem{ni2020conditional} Ni H., Szpruch L., Wiese M., Liao S. and Xiao B.: Conditional Sig-Wasserstein GANs for Time Series Generation. \ARXIV{2006.05421}, (2020)


\end{thebibliography}
\end{document}